\tikzset{ >=stealth',
         leadsto/.style={-angle 90,decorate,decoration=snake,very thick},
         source/.style={inner sep=1pt,circle,draw,thick},
         sink/.style={inner sep=2pt,rectangle,draw,thick}
}
\newtheorem{theorem}{Theorem}[section]
\newtheorem{algorithm}[theorem]{Algorithm}
\newtheorem{corollary}[theorem]{Corollary}
\newtheorem{lemma}[theorem]{Lemma}
\newtheorem{proposition}[theorem]{Proposition}
\theoremstyle{definition}
\newtheorem{definition}[theorem]{Definition}
\theoremstyle{definition}
\theoremstyle{definition}
\theoremstyle{definition}
\newtheorem{example}[theorem]{Example}
\theoremstyle{definition}
\theoremstyle{remark}
\newtheorem*{remark}{Remark}
\newtheorem*{remarks}{Remarks}
\theoremstyle{definition}
\theoremstyle{definition}
\DeclareMathOperator{\add}{add}
\newcommand{\can}{\operatorname{can}\nolimits}
\DeclareMathOperator{\coh}{coh}
\newcommand{\cohX}{\coh \mathbb{X}}
\DeclareMathOperator{\Coker}{Coker}
\DeclareMathOperator{\Ext}{Ext}
\DeclareMathOperator{\End}{End}
\DeclareMathOperator{\Hom}{Hom}
\DeclareMathOperator{\Ker}{Ker}
\renewcommand{\mod}{\operatorname{mod}\nolimits}
\DeclareMathOperator{\rad}{rad}
\DeclareMathOperator{\rank}{rk}
\DeclareMathOperator{\pd}{pd}
\DeclareMathOperator{\inj}{inj}
\newcommand{\nsource}{\ensuremath{\operatorname{source}\nolimits^-} }
\newcommand{\nsink}{\ensuremath{\operatorname{sink}\nolimits^-} }
\newcommand{\sq}{\operatorname{sq}\nolimits}
\newcommand{\vect}{\operatorname{vect}\nolimits}
\DeclareMathOperator{\Cone}{Cone}
\newcommand{\D}{\operatorname{D}\nolimits \!}
\newcommand{\R}{\operatorname{R}\nolimits \!}
\renewcommand{\L}{\operatorname{L}\nolimits \!}
\newcommand{\bfE}{\mathbf{E}}
\newcommand{\bfM}{\mathbf{M}}
\newcommand{\bfX}{\mathbf{X}}
\newcommand{\sep}{\!\! : \!\!}
\newcommand{\extto}{\xrightarrow}
\newcommand{\from}{\colon \!}
\newcommand{\into}{\rightarrowtail}
\newcommand{\lambdab}{ \text{\boldmath{$\lambda$}}}
\newcommand{\lperp}[1]{\sideset{^\perp}{}{\operatorname{#1}}}
\newcommand{\onto}{\twoheadrightarrow}
\newcommand{\rperp}[1]{\sideset{}{^\perp}{\operatorname{#1}}}
\newcommand{\bfL}{ \mathbf{L}}  
\newcommand{\bfp}{ \mathbf{p}}  
\newcommand{\bbK}{ \mathbb{K}}  
\newcommand{\bbN}{ \mathbb{N}}  
\newcommand{\bbP}{ \mathbb{P}}  
\newcommand{\bbX}{ \mathbb{X}}  
\newcommand{\bbZ}{ \mathbb{Z}}  
\newcommand{\cC}{{\mathcal C}} 
\newcommand{\cD}{{\mathcal D}} 
\newcommand{\cE}{{\mathcal E}} 
\newcommand{\cF}{{\mathcal F}} 
\newcommand{\cH}{{\mathcal H}} 
\newcommand{\cI}{{\mathcal I}} 
\newcommand{\cL}{{\mathcal L}} 
\newcommand{\cR}{{\mathcal R}} 
\newcommand{\cO}{{\mathcal O}} 
\newcommand{\cP}{{\mathcal P}} 
\newcommand{\cS}{{\mathcal S}} 
\newcommand{\cT}{{\mathcal T}} 
\newcommand{\cX}{{\mathcal X}} 
\newcommand{\cY}{{\mathcal Y}} 
\newcommand{\vc}{ {\vec{c}}} 
\newcommand{\vx}{ {\vec{x}}}
\newcommand{\tcL}{\widetilde{\cL}}
\newcommand{\lambdat}{\tilde{\lambda}}
\newcommand{\tcR}{\widetilde{\cR}}
\newcommand{\rhot}{\tilde{\rho}}
\numberwithin{figure}{section}
\numberwithin{equation}{section}
\title[Graded mutation]{
Graded mutation in cluster categories coming from hereditary categories with a tilting object}
\author[Bertani-{\O}kland]{Marco Angel Bertani-{\O}kland}
\author[Oppermann]{Steffen Oppermann}
\address{Institutt for matematiske fag\\ NTNU\\ 7491 Trondheim\\ Norway}
\email{Marco.Tepetla@math.ntnu.no}
\email{Steffen.Oppermann@math.ntnu.no}
\author[Wr{\aa}lsen]{Anette Wr{\aa}lsen}
\address{Faculty of Informatics and e-learning\\ S{\o}r-Tr{\o}ndelag University College\\ 7004 Trondheim\\ Norway}
\email{anette.wralsen@hist.no}
\begin{document}

\begin{abstract}
We present a graded mutation rule for quivers of cluster-tilted algebras. Furthermore, we give a technique to recover a cluster-tilting object from its graded quiver in the cluster category of $\cohX$.
\end{abstract}

\maketitle

\section{Introduction}

 Let $\bbK$ be an algebraically closed field and $\cH$ be a connected hereditary $\bbK$-category with a tilting object. It was shown in \cite{Happel2} that $\cH$ is derived equivalent to $\mod H$ for some finite dimensional hereditary $\bbK$-algebra $H$, or to the category $\cohX$ of coherent sheaves on a weighted projective line $\bbX$. The cluster category $\cC:=\cC_{\cH}$, an orbit category of the derived category of $\cH$, was introduced in \cite{BMRRT}. If $\cH$ has no nonzero projectives then the cluster category comes with a natural grading on morphisms, inherited from $\cH$ in the sense that any map in $\cC$ can be written as a sum of a morphism and an extension in $\cH$. 

One important reason for studying cluster categories is that they
contain certain special objects: Cluster-tilting objects and their endomorphism rings, the so-called cluster-tilted algebras (see Section~\ref{section.cluster.cats}). These algebras have been extensively investigated, see for instance \cite{BMR1,BMR2,BR,BRS,CCS,ABS}. 

In this paper we introduce a graded mutation rule for the cluster
categories coming from hereditary categories $\cH$ with a tilting
object. Graded mutation is a way to mutate preserving the natural
grading of a cluster-tilted algebra (see
Section~\ref{section.gradmut}). This rule extends the quiver mutation
rule of Fomin-Zelevinsky, and is an adaptation of the mutation rule
for tilting sheaves in $\cohX$ given in \cite{Hubner1} to the
cluster-tilting case. In order to introduce this rule, we define each
indecomposable summand of a cluster-tilting object to be either a sink
or a source. The sink/source property was defined originally for
$\cohX$ in \cite{Hubner1}, but we present a way to do this in any
$\cH$. Then we lift this property to $\cC$ using the natural
correspondence between tilting objects in $\cH$ and cluster-tilting
objects in $\cC$ (see \cite[Proposition~3.4]{BMRRT}).
  
The main result of this article is a positive answer to the following recovery problem: Let $T$ be a basic cluster-tilting object in $\cC_{\bbX}$, the cluster category of $\cohX$ for some weighted projective line $\bbX := (\bbP_{\bbK}^1,\lambdab, \bfp)$, with cluster-tilted algebra $\Gamma = \End_{\cC_{\bbX}}(T)$. If we are given only the graded quiver $Q_{\Gamma}$ and the rank of the indecomposable summands of $T$, can we recover $T$ and $\Gamma$? 

In order to present a solution, we define the notion of the quiver of an exceptional sequence and develop a mutation rule for these quivers. Then we provide a concrete algorithm that transforms $T$ into a well-known cluster-tilting object in $\cC_{\bbX}$ by applying successive mutations of quivers of exceptional sequences (see Theorem~\ref{theorem.reconstruct_from_Q+rk}). Keeping track of the mutations involved, we are able to determine $T$ in $\cohX$ up to twist with a line bundle and choice of parameter sequence $\lambdab$.

The paper is organized as follows: 

In Section~\ref{section.background} we recall some basic results on hereditary categories with a tilting object, cluster categories, quiver mutation and exceptional sequences. 

 For coherent sheaves on a weighted projective line, the notion of
 being a sink or a source of an indecomposable summand of a tilting object has been introduced by H{\"u}bner. In Section~\ref{section.sink.source} we extend this notion to arbitrary hereditary categories.

In Section~\ref{section.gradmut} we adapt the sink/source property to indecomposable summands of cluster-tilting objects by choosing a ``canonical'' hereditary category $\cH_*$ from which we construct the cluster category. This category $\cH_*$ will also allow us to define a {\em natural grading} of a cluster-tilted algebra. We then present our graded mutation rule, and explore its relations with the sink-source distribution of a cluster-tilting object.

Section~\ref{section.mut_excep_sequences} develops the theory of mutation of quivers of exceptional sequences. 

In Section~\ref{section.recoverT} we give the algorithm to recover
$T$ from certain given combinatorial data.

Finally, in Section~\ref{lastsection} we collect some natural
questions on what information the quiver of a cluster-tilted algebra
contains, and to what extent answers are already known or are obtained
within this paper.                                                       

\subsection*{Acknowledgements}

Research for this paper began during a visit of the first and third authors to the University of Paderborn in 2009. We would like to thank Helmut Lenzing for inspirational suggestions and many helpful conversations. We would also like to thank the Paderborn representation theory group for their hospitality. 

\section{Background} \label{section.background}

\subsection{Hereditary categories with a tilting object}

Let $\mathcal H$ be a connected hereditary abelian category over an algebraically closed field $\bbK$, and assume that $\cH$ is $\Hom$-finite. Furthermore assume that $\mathcal H$ has a \emph{tilting object}, that is, an object $T$ such that $\Ext^1_{\mathcal H}(T,T) = 0$ and such that the number of nonisomorphic indecomposable summands of $T$ is the same as the rank of $K_0(\mathcal H)$. Then it is shown in \cite{Happel2} that $\mathcal H$ is derived equivalent to $\mod H$ for some finite dimensional hereditary $\bbK$-algebra H, or derived equivalent to $\coh \mathbb X$, the category of coherent sheaves on the weighted projective line  $\bbX = (\mathbb P^1_{\bbK},\lambdab, \bfp)$ for some sequence of pairwise distinct points $\lambdab$ and some weight sequence $\bfp$. Note that $\mathcal H$ has almost split sequences (\cite{HRS}) and thus an Auslander-Reiten quiver (see \cite{ARS} for more background).

In particular it is known (see \cite[Theorem~4.2]{Happel1}) that if $\mathcal H$ has nonzero projectives, it is equivalent to the  
category of finitely generated modules over some finite dimensional hereditary $\bbK$-algebra $H$. Any such algebra is Morita  
equivalent to the path algebra $\bbK Q$ for some finite quiver $Q$. The quivers for which the algebra is of finite or tame representation type are explicitly known. They are the quivers of simply-laced \emph{Dynkin type} or \emph{Euclidean type}, respectively. For a nice survey on this subject we refer the reader to \cite{Lenzing2}. If the algebra is representation finite, the Auslander-Reiten quiver of $D^b(H)$ is of the form $\mathbb ZQ$, if it is tame it has components of type $\mathbb ZQ$ as well as tubes of finite rank, and if it is wild the Auslander-Reiten quiver consists of $\mathbb ZQ$ and $\mathbb ZA_{\infty}$ components.

If $\mathcal H$ is not derived equivalent to a hereditary category with nonzero projectives, then $D^b(\mathcal H)$ is equivalent to $D^b(\coh \mathbb X)$ for some weighted projective line $\mathbb X$. These categories also come in three classes with respect to their representation type, namely \emph{tame domestic}, \emph{tubular} and \emph{wild}. In the first case $D^b(\coh \mathbb X)$ is equivalent to $D^b(\mod H)$ for some tame hereditary algebra $H$, and for any tame hereditary algebra $H$ there is such an equivalence. This is the only case in which these two types of categories intersect (see Figure~\ref{figure.all_types}). The Auslander-Reiten quiver of $D^b(\mathcal H)$ in this case thus combines $\mathbb ZQ$ components and tubes. In the tubular case it only consists of tubes and in the wild case the Auslander-Reiten quiver consists of tubes and $\mathbb ZA_{\infty}$ components (see \cite{LP}).

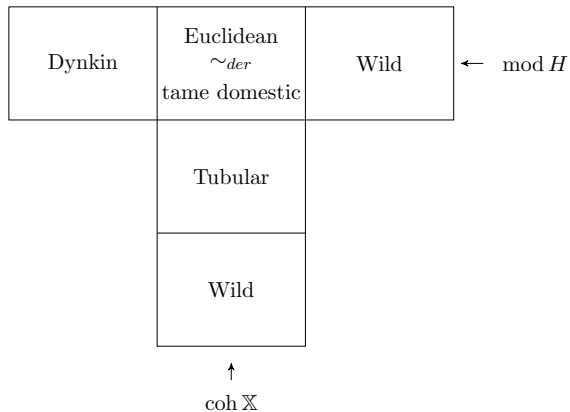
\begin{figure}
\[\scalebox{0.75}{
\begin{tikzpicture}[scale=1,xscale=1.3,yscale=-1]
\draw (0,0) -- (6,0) -- (6,2) -- (0,2) -- (0,0);
\draw (2,0) -- (2,6) -- (4,6) -- (4,0);
\draw (2,4) -- (4,4);
\node at (1,1) {Dynkin};
\node at (3,0.5) {Euclidean};
\node at (3,1) {$\sim_{der}$};
\node at (3,1.5) {tame domestic};
\node at (5,1) {Wild};
\node at (7.1,1) {$\mod H$};
\node at (3,3) {Tubular};
\node at (3,5) {Wild};
\node at (3,7) {$\cohX$};
\draw [->] (6.4,1) -- (6.1,1);
\draw [->] (3,6.6) -- (3,6.3);
\end{tikzpicture}}
\]
\caption{ Hereditary categories with a tilting object. }  \label{figure.all_types}
\end{figure}

\subsubsection{The category of coherent sheaves on a weighted projective line}

In this subsection we briefly recall some properties of the category of coherent sheaves on a weighted projective line. The purpose is mainly to fix notation. We refer the reader to \cite{GL, Lenzing1, Lenzing2,CK} for further background.

Let $\bbK$ be an algebraically closed field. A {\em parameter sequence} is a (possibly empty) sequence $\lambdab=(\lambda_1,\ldots,\lambda_t)$ of pairwise distinct points of the projective line $\bbP^1_\bbK$. A sequence $\bfp=(p_1,\ldots,p_t)$ of integers greater than 1 is called a {\em weight sequence}. Then a {\em weighted projective line} is a triple $\bbX=(\bbP^1_\bbK,\lambdab,\bfp)$, where $\lambdab$ and $\bfp$ are respectively a parameter sequence and a weight sequence of the same length. The category $\cohX$ of coherent sheaves on the weighted projective line $\mathbb X$ is defined as follows. We denote by $\bfL (\bfp)$ the rank 1 abelian group
\[
 \bfL( \bfp ) = \langle \vx_1,\ldots,\vx_t,\vc \, \mid \, p_1\vx_1 = \cdots = p_t\vx_t=\vc \rangle.
\]
This is an ordered group with $\bfL_+=\sum_{i=1}^t \bbN \vx_i$ as its set of positive elements. Then the algebra
\[
 S(\bfp,\lambdab) = \bbK[u,v,x_1,\ldots,x_t]/(x_i^{p_i} - \lambda_i^0 u - \lambda_i^1 v),
\]
where $\lambda_i=[\lambda_i^0 \sep \lambda_i^1]\in \bbP^1_\bbK$, becomes an $\bfL(\bfp)$-graded algebra by defining $\deg u=\deg v=\vc$ and $\deg x_i=\vx_i$. The category $\cohX$ is defined as the quotient of the category of finitely generated $\bfL(\bfp)$-graded $S(\bfp,\lambdab)$-modules modulo the Serre subcategory of finite length modules. Recall that a {Serre subcategory} is a full subcategory closed under taking subobjects, quotients and extensions.

In $\cohX$ we have the two disjoint subcategories $\coh_0 \bbX$ and $\vect \bbX$, where the first are all the objects of finite length and the latter are all the objects without simple subobjects. The objects of $\coh_0 \bbX$ are called \emph{torsion sheaves} while the objects of $\vect \bbX$ are called \emph{torsion-free sheaves} or {\em vector bundles}. By \cite[Proposition 10.1]{Lenzing2} we know that each indecomposable object from $\cohX$ either belongs to $\coh_0 \bbX$ or to $\vect \bbX$.

An important property of $\cohX$ is that its Grothendieck group $K_0 \bbX$ is finitely generated free. There is a useful linear map from $K_0 \bbX$ to $\mathbb Z$ called the \emph{rank}. This function has the property that $\rank \tau X = \rank X$ for all $X \in \cohX$, and $\rank X = 0$ for all $X \in \coh_0 \bbX$ while $\rank X > 0$ for all nonzero $X \in \vect \bbX$. Moreover we know that there exists a vector bundle of rank one, the \emph{structure sheaf $\cO$}, induced by the free module $S(\bfp,\lambdab)$. Vector bundles of rank one are also referred to as \emph{line bundles}. Moreover, every line bundle is of the form  $\cO(\vx)$ for some uniquely determined $\vx \in \bfL(\bfp)$. For proofs of all these properties we refer to \cite{Lenzing1,CK}.

One special property of the rank is \emph{additivity on tilting sheaves} (a tilting sheaf is a tilting object in $\cohX$). 

\begin{definition}
Let $T = T_1 \oplus T_2 \oplus \cdots \oplus T_n$ be a tilting sheaf, where the $T_i$ are indecomposable for all $1 \leq i \leq n$. We denote by $Q_T$ be the quiver (with relations) of $\End_{\cohX}(T)$. A linear function $f \colon K_0 \bbX \rightarrow \mathbb Z$ is \emph{additive} on $T$, if

\begin{itemize}
\item[(i)]  $f(T_i) \geq 0$ for all $1 \leq i \leq n$, and
\item[(ii)] $2 \cdot f(T_i) = \sum\limits_{j} a(j,i) \cdot f(T_j)  
- \sum\limits_{j} b(j,i) \cdot f(T_j)$
\end{itemize}
where $a(i,j)$ denotes the number of arrows between the vertices in $Q_T$ corresponding to $T_i$ and $T_j$ (in either direction, $a(i,j)=0$ if there are no such arrows), and similarly $b(i,j)$ denotes the number of relations between the vertices corresponding to $T_i$ and $T_j$.
\end{definition}

Then we have the following (\cite[Theorem~3.2]{Hubner3}):

\begin{theorem} \label{rank-is-additive}
The rank function is additive on each tilting sheaf.
\end{theorem}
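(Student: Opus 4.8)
Part~(i) is immediate: each $T_i$ is indecomposable, so it lies either in $\coh_0\bbX$, whence $\rank T_i = 0$, or in $\vect\bbX$, whence $\rank T_i > 0$; either way $\rank T_i \ge 0$. For part~(ii) the plan is to pass to the endomorphism algebra and rewrite everything in terms of the Euler form. Put $\Lambda := \End_{\cohX}(T)$. Since $\cohX$ is hereditary and $T$ is a tilting object, $\Lambda$ is quasi-tilted, so $\gldim\Lambda \le 2$ (see \cite{HRS}), and $\R\Hom(T,-)$ is a triangle equivalence $D^b(\cohX)\xrightarrow{\sim} D^b(\mod\Lambda)$ taking each $T_i$ to the indecomposable projective $P_i$. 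It identifies $K_0\bbX$ with $K_0(\mod\Lambda)$ and preserves the Euler form $\langle X,Y\rangle := \sum_k(-1)^k\dim\Ext^k(X,Y)$, which equals $\dim\Hom - \dim\Ext^1$ on $\cohX$ and $\dim\Hom - \dim\Ext^1 + \dim\Ext^2$ on $\mod\Lambda$. Reading a minimal projective resolution of the simple $\Lambda$-module $S_i$ off the quiver with relations shows that $\langle S_i,S_j\rangle$ is $\delta_{ij}$, minus the number of arrows $i\to j$ in $Q_T$, plus the number of relations $i\to j$; hence, symmetrizing and using that $a(i,j)$ and $b(i,j)$ count arrows, resp.\ relations, in either direction,
\[
\langle S_i,S_j\rangle + \langle S_j,S_i\rangle = 2\delta_{ij} - a(i,j) + b(i,j).
\]
Multiplying by $\rank T_j$, summing over $j$, and using $a(i,j)=a(j,i)$ and $b(i,j)=b(j,i)$, one sees that (ii) is equivalent to the assertion that $\sum_j\bigl(\langle S_i,S_j\rangle + \langle S_j,S_i\rangle\bigr)\rank T_j = 0$ for every $i$; here and below $\langle-,-\rangle$, $\rank$, the $S_i$ and $\tau$ are all transported to $K_0\bbX$ along the equivalence.

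To prove this vanishing I would treat the two summands separately. First, $\langle [T_i],[S_j]\rangle = \langle [P_i],[S_j]\rangle = \delta_{ij}$, so $\{[T_i]\}_i$ is the basis left dual to $\{[S_j]\}_j$; hence $x = \sum_j\langle x,[S_j]\rangle[T_j]$ for every $x\in K_0\bbX$, in particular $[S_i] = \sum_j\langle S_i,S_j\rangle[T_j]$, so that $\sum_j\langle S_i,S_j\rangle\rank T_j = \rank S_i$. Second, Serre duality on $\cohX$ gives $\langle A,B\rangle = -\langle B,\tau A\rangle$, i.e.\ $\langle B,A\rangle = -\langle\tau^{-1}A,B\rangle$; applying the expansion above to $\tau^{-1}S_i$ then gives $\sum_j\langle S_j,S_i\rangle[T_j] = -[\tau^{-1}S_i]$, and the $\tau$-invariance of the rank recalled before the theorem yields $\sum_j\langle S_j,S_i\rangle\rank T_j = -\rank(\tau^{-1}S_i) = -\rank S_i$. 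Adding the two identities gives $0$.

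The global-dimension bound, the translation of arrows and relations into $\Ext^1$ and $\Ext^2$ of simples, and the reduction of (ii) to the displayed vanishing are all routine. The real content is that, after this reduction, the statement collapses to the single fact that the rank is constant on $\tau$-orbits --- equivalently, that the vector representing the rank lies in the radical of the symmetrized Euler form --- which is exactly what was recalled before the theorem. I therefore do not expect a serious obstacle; the only point that needs care is keeping the direction of the dual-basis expansion and the Serre-duality sign consistent, after which the computation closes in one line.
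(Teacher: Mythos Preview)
The paper does not give its own proof of this theorem; it simply cites \cite[Theorem~3.2]{Hubner3} and moves on. So there is nothing in the paper to compare your argument against line by line.

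That said, your argument is correct and is essentially the standard one. The reduction of condition~(ii) to the vanishing of $\sum_j(\langle S_i,S_j\rangle+\langle S_j,S_i\rangle)\rank T_j$ is exactly the statement that the rank vector lies in the radical of the symmetrized Euler form on $K_0\bbX$, and your two-step computation---expanding in the dual basis $\{[T_j]\}$ and then invoking Serre duality plus $\tau$-invariance of rank---is the clean way to see this. H\"ubner's original argument proceeds along the same lines: the key input is precisely that $\rank$ is $\tau$-invariant (equivalently, $\rank$ vanishes on the image of $\mathrm{id}+\Phi^{-1}$ for $\Phi$ the Coxeter transformation), which the paper recalls just before stating the theorem. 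Your identification of arrows and minimal relations with $\dim\Ext^1_\Lambda(S_i,S_j)$ and $\dim\Ext^2_\Lambda(S_i,S_j)$ is justified because $\bbK$ is algebraically closed and $\gldim\Lambda\le 2$, so no higher terms enter the Euler form. The only cosmetic remark is that when you write ``$\rank S_i$'' and ``$\tau^{-1}S_i$'' you are tacitly identifying $S_i$ with the complex $T\otimes_\Lambda^{\L}S_i\in D^b(\cohX)$; you say this, but it is worth being explicit that rank and $\tau$ are then being read on $K_0\bbX$, where both make sense for arbitrary classes.
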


This phenomenon is special to the case of tilting objects on a weighted projective line, and plays a crucial role in their study. It will be made use of throughout this paper.

There are two special tilting sheaves in $\coh \mathbb X$ which will appear later in this paper. 

\begin{definition}

The {\em canonical} tilting sheaf (starting in the line bundle $\cO$)
is given by $T_{\can}=\oplus_{\vx} \cO(\vx)$ where $0\le \vx \le
\vc$. Its endomorphism ring is called a {\em canonical algebra} and is
given by the quiver
\[\scalebox{0.9}{
\begin{tikzpicture}[scale=2.5,yscale = -0.8]
\node (ox1) at (2,1) {$\cO(\vx_1)$};
\node (o2x1) at (3,1) {$\cO(2\vx_1)$};
\node (cdots1) at (4,1) {\phantom{.}$\cdots$};
\node (op1x1) at (5,1) {$\cO((p_1-1)\vx_1)$};
\node (o) at (1,2) {$\cO$};
\node (ox2) at (2,1.6) {$\cO(\vx_2)$};
\node (o2x2) at (3,1.6) {$\cO(2\vx_2)$};
\node (cdots2) at (4,1.6) {\phantom{.}$\cdots$};
\node (op2x2) at (5,1.6) {$\cO((p_2-1)\vx_2)$};
\node (oc) at (6.2,2) {$\cO(\vc)$};
\node (vdots1) at (2,2.3) {$\vdots$};
\node (vdots2) at (3,2.3) {$\vdots$};
\node (vdots4) at (5,2.3) {$\vdots$};
\node (oxt) at (2,3) {$\cO(\vx_t)$};
\node (o2xt) at (3,3) {$\cO(2\vx_t)$};
\node (cdotst) at (4,3) {\phantom{.}$\cdots$};
\node (optxt) at (5,3) {$\cO((p_t-1)\vx_t)$};
\draw [->] (o) -- node [above,xshift=-1mm] {$x_1$} (ox1.west);
\draw [->] (ox1) -- node [above] {$x_1$} (o2x1);
\draw [->] (o2x1) -- node [above] {$x_1$} (cdots1);
\draw [->] (cdots1) -- node [above] {$x_1$} (op1x1);
\draw [->] (o) -- node [above] {$x_2$} (ox2.west);
\draw [->] (ox2) -- node [above] {$x_2$} (o2x2);
\draw [->] (o2x2) -- node [above] {$x_2$} (cdots2);
\draw [->] (cdots2) -- node [above] {$x_2$} (op2x2);
\draw [->] (op1x1)+(0.5,0) -- node [above,xshift=1mm] {$x_1$} (oc.150);
\draw [->] (op2x2.east) -- node [above] {$x_2$} (oc.160);
\draw [->] (optxt)+(0.5,0) -- node [above,xshift=-1mm] {$x_t$} (oc.210) ;
\draw [->] (o) -- node [above,xshift=1mm] {$x_t$} (oxt.west);
\draw [->] (oxt) -- node [above] {$x_t$} (o2xt);
\draw [->] (o2xt) -- node [above] {$x_t$} (cdotst);
\draw [->] (cdotst) -- node [above] {$x_t$} (optxt);
\draw [->] (o.10) -- node [above] {$y_0$} (oc.170);
\draw [->] (o.350) -- node [below] {$y_1$} (oc.190);
\end{tikzpicture}}
\]
with the relations $x_i^{p_i} = \lambda_i^0 y_0 +\lambda_i^1 y_1$ for
$i =1, \ldots, t$. (Note that for $t \geq 2$ the arrows $y_0$ and $y_1$ are generated by the other arrows, so they do not appear in the Gabriel quiver of the algebra. Similarly for $t=1$ there is only one $y$-arrow in the Gabriel quiver.)

\end{definition}

\begin{definition}\label{Squid} 
 The {\em squid} tilting sheaf (starting in the line bundle $\cO$) is given by $T_{\sq}=\cO\oplus \cO(\vc) \oplus_{i,j} S_i^{(j)} $ where $1 \le i \le t$ and $1\le j \le p_i -1$. Its endomorphism ring is called a {\em squid algebra} and has the following quiver:
\[ \scalebox{0.9}{
\begin{tikzpicture}[scale=2.5,yscale = -0.8]
\node (s1p-1) at (3,1) {$S_1^{(p_1-1)}$};
\node (s1p-2) at (4,1) {$S_1^{(p_1-2)}$};
\node (cdots1) at (5,1) {\phantom{S}$\cdots$\phantom{S}};
\node (s11) at (6,1) {$S_1^{(1)}$};
\node (o) at (1,2) {$\cO$};
\node (oc) at (2,2) {$\cO(\vc)$};
\node (s2p-1) at (3,1.7) {$S_2^{(p_2-1)}$};
\node (s2p-2) at (4,1.7) {$S_2^{(p_2-2)}$};
\node (cdots2) at (5,1.7) {\phantom{S}$\cdots$\phantom{S}};
\node (s21) at (6,1.7) {$S_2^{(1)}$};
\node (vdots1) at (3,2.3) {$\vdots$};
\node (vdots2) at (4,2.3) {$\vdots$};
\node (vdots4) at (6,2.3) {$\vdots$};
\node (stp-1) at (3,3) {$S_t^{(p_t-1)}$};
\node (stp-2) at (4,3) {$S_t^{(p_t-2)}$};
\node (cdots4) at (5,3) {\phantom{S}$\cdots$\phantom{S}};
\node (st1) at (6,3) {$S_t^{(1)}$};
\draw [->] (s1p-1) -- (s1p-2);
\draw [->] (s1p-2) -- (cdots1);
\draw [->] (cdots1) -- (s11);
\draw [->] (1.15,1.95) -- node [above] {$x_0$} (1.8,1.95);
\draw [->] (1.15,2.05) -- node [below] {$x_1$}(1.8,2.05);
\draw [->] (oc) -- node [above,xshift=-1mm] {$y_1$}(s1p-1.west);
\draw [->] (oc) -- node [above] {$y_2$}(s2p-1.west);
\draw [->] (oc) -- node [above,xshift=1mm] {$y_t$}(stp-1.west);
\draw [->] (s2p-1) -- (s2p-2);
\draw [->] (s2p-2) -- (cdots2);
\draw [->] (cdots2) -- (s21);
\draw [->] (stp-1) -- (stp-2);
\draw [->] (stp-2) -- (cdots4);
\draw [->] (cdots4) -- (st1);
\end{tikzpicture}}
\]
Here the $S_i^{(1)}$ are the simple sheaves concentrated in $\lambda_i$, and for $t \in \{2, \ldots, p_i-1\}$ the $S_i^{(t)}$ are the sheaves of length $t$ with top $S_i^{(1)}$. Its relations are $(\lambda_i^0 x_0 + \lambda_i^1 x_1)y_i=0$ for $1\le i \le t$.
\end{definition}

\subsection{Cluster categories}\label{section.cluster.cats}

The cluster category $\mathcal C_H$ of a hereditary algebra $H$ was introduced in \cite{BMRRT}. It is defined as the orbit category $D^b(H)/F$, where $F$ is the functor $\tau^{-1}[1]$ composed of the inverse of the Auslander-Reiten translation $\tau$ and the shift functor $[1]$. This is a triangulated (\cite{Keller}) Krull-Schmidt (\cite{BMRRT}) category which is an important tool to study the tilting theory of $H$. It also models the combinatorics of the cluster algebras introduced by Fomin and Zelevinsky in 2002 (see \cite{FZ1}, \cite{FZ2}) in a natural way.

The construction of cluster categories extends naturally to any hereditary category $\mathcal H$ with a tilting object. For the remainder of this section we will write $\mathcal{C}$ and $\mathcal{D}$ for the cluster category and bounded derived category of any such category $\mathcal{H}$, respectively. In statements only referring to the categories coming from some hereditary algebra $H$, we write $\mathcal{C}_H$ and $\mathcal{D}_H$, respectively.

The objects of $\mathcal C$ are the objects in $\mathcal D$. Given objects $X$ and $Y$ in $ \mathcal D$, the space $\Hom_{\mathcal C}(X, Y)$ is  defined as 
\begin{equation}\label{eq.hom.spaces.cluster}
 \Hom_{\mathcal C}(X, Y) = \coprod_{i \in \mathbb Z} \Hom_{\mathcal D}(X,F^i Y).
\end{equation}

The  morphisms in $\mathcal C$ are thus induced by morphisms and extensions in $\mathcal H$.

This category has a set of objects with nice combinatorial properties, namely the \emph {cluster-tilting objects}. These are the maximal rigid objects of $\cC$, that is, the rigid objects that are maximal with respect to the number of nonisomorphic indecomposable summands. A cluster-tilting object is called {\em basic} if all of its indecomposable direct summands are nonisomorphic. All cluster-tilting objects occurring throughout this paper are assumed to be basic.

By \cite[Theorem~3.3]{BMRRT} the cluster-tilting objects of $ \mathcal C_H$ are all induced by tilting modules over a hereditary algebra $H'$ derived equivalent to $H$. This correspondence between tilting and cluster-tilting objects also extends naturally  to $\mathcal H = \coh \mathbb X$. If $T$ is a cluster-tilting object in $\mathcal C$, we refer to $\End_{\mathcal C}(T)$ as a \emph{cluster-tilted algebra}.

In some cases there exists a simpler construction of $\cC$. If $\cH$ has no nonzero projectives, then $\cH$ itself is a fundamental domain of $\cC$, that is, the canonical functor $\mathcal{D} \to \mathcal{C}$ induces a bijection between the isomorphism classes of objects in $\mathcal{H}$ and in $\mathcal{C}$. So let $\cH$ be without nonzero projectives. Then $\cC$ is equivalent to the category with the same objects as $\cH$, and with morphism spaces given by $\Hom_{\cC}(X,Y) = \Hom_{\cH}(X,Y) \oplus \Ext^1_{\cH}(X,\tau^{-1}Y)$. This decomposition comes from Equation~(\ref{eq.hom.spaces.cluster}), which induces a natural $\mathbb{Z}$-grading of the morphisms spaces. Here the morphisms in the first summand are of degree zero, and those in the second summand are of degree one. Note that if $X,Y\in \cH$ then $\Hom_{\mathcal D}(X,F^i Y)= \Ext^i_{\cH}(X,\tau^{-i} Y)$, so this space vanishes for $i \not\in \{0,1\}$. In particular any composition of degree one morphisms in $\cC$ vanishes.

The $\bbZ$-grading of the morphism spaces also induces a natural grading on the cluster-tilted algebras associated with $\cH$ and their quivers. This construction is studied for the coherent sheaves in \cite{BKL}, and it works the same way for any hereditary category without nonzero projectives.

\subsection{Quiver mutation}

Assume that $Q$ is a finite quiver with no loops and no
$2$-cycles. Also assume that the vertices of $Q$ are numbered from $1$
to $n$. To apply Fomin-Zelevinsky quiver mutation to vertex $k$ and
obtain $Q^* = \mu_k(Q)$, we do the following:

\begin{enumerate}

    \item  If there are $r \ge 0$ arrows $i \rightarrow k$, $s \ge 0 $ arrows $k \rightarrow j$ and $t$ arrows $j \rightarrow i$ in $Q$ (where a negative number of arrows means that the arrows go in the opposite direction of what is indicated), there are $r$ arrows $k \rightarrow i$, $s$ arrows $j \rightarrow k$ and $t-rs$ arrows $j \rightarrow i$ in $Q^*$.

    \item  All other arrows in $Q^*$ are the same as in $Q$.

\end{enumerate}

Note that $\mu_k^2(Q) = Q$. We say that $Q$ and $Q'$ lie in the same {\em mutation component} if there is a sequence of mutations taking $Q$ to $Q'$. The collection of all quivers lying in the mutation component of $Q$ is called the {\em mutation class} of $Q$.

\subsection{Exceptional sequences} \label{Section.Exceptional_sequences}
 This section deals with the necessary results on exceptional sequences. Further details can be found in \cite{CB,Meltzer,Ringel3}.

Let $\cH$ be a hereditary $\bbK$-category with a tilting object over an algebraically closed field $\bbK$. Denote by  $\cD:=D^b(\cH)$ its bounded derived category. Let $K_0(\cH)$ and $K_0(\cD)$ be their corresponding Grothendieck groups. It is a well know fact that these two groups are isomorphic. An object $E$ in $\cH$ (resp.\ $\cE$ in $\cD$) is called {\em exceptional} if it is indecomposable and $\Ext^1_\cH(E,E)=0$ (resp.\ $\Hom_\cD(\cE,\cE[i])$ for $i\ne 0$).

\begin{remark}
Let $\cE$ in $\cD$ be indecomposable. Then $\cE \simeq E[i]$ for some integer $i\in \bbZ$ and an indecomposable object $E$ in $\cH$. Throughout the rest of this paper, we denote elements of $\mathcal{H}$ by non-script letter ($E$, $F$, $\ldots$), and elements of $\mathcal{D}$ which do not necessarily lie in $\mathcal{H}$ by script letters ($\mathcal{E}$, $\mathcal{F}$, $\ldots$).
\end{remark}

\begin{definition}
 A sequence $(\cE_1,\ldots,\cE_n)$ of exceptional objects in $\cD$ is called an {\em exceptional sequence of length} $n$ if $\Hom_{\cD}(\cE_i,\cE_j[l])=0$ for all $l\in\bbZ$ whenever $1\le j<i\le n$. 
\end{definition}

By the previous remark, any exceptional sequence $(\cE_1,\ldots,\cE_n)$ is of the form $(E_1[l_1],\ldots,E_n[l_n])$ for suitable $l_1,\ldots,l_n$ in $\bbZ$ and exceptional objects $E_1,\ldots,E_n$ in $\cH$. If all $l_1,\ldots,l_n$ are zero, we call the sequence $(E_1,\ldots,E_n)$ an {\em exceptional sequence} in $\cH$. An exceptional sequence $(\cE_1,\ldots,\cE_n)$ is {\em complete} if $n= \rank K_0(\cH)$. Since we are only interested in isomorphism classes of objects, two exceptional sequences will be considered the same if the objects in the corresponding positions are isomorphic.

 An exceptional sequence of length $2$ will be called an {\em   exceptional pair}. Furthermore we call an exceptional pair $(E,F)$ {\em orthogonal} if we also have that $\Hom(E,F)=0$. For an orthogonal pair $(E,F)$, we may consider the category $\cC(E,F)$ of all objects in $\cH$ having a filtration with factors isomorphic to $E$ and $F$. By \cite[Section~1]{Ringel1}, this is an abelian subcategory of $\mathcal{H}$ with two simple objects, namely $E$ and $F$, which is equivalent to the category of modules {over the quiver algebra of the generalized Kronecker quiver
$\circ
\begin{tikzpicture}[baseline]
\node at (.3,.15) {$\cdot$};
\node at (.3,.09) {$\cdot$};
\node at (.3,.03) {$\cdot$};
\draw [->] (0,.2) -- (.6,.2);
\draw [->] (0,0) -- (.6,0);
\end{tikzpicture} \circ$
with the number of arrows being $\dim \Ext^1(E,F)$}. The only orthogonal pair in $\cC(E,F)$ is the one corresponding to the simple objects $E$ and $F$.

For an exceptional pair $(\cE,\cF)$ in $\cD$, the {\em left mutation}  $\cL_\cE \cF$ of $\cF$ by $\cE$ is defined by the triangle 
\[  \cL_\cE \cF[-1]\to \cE^0 \extto{f} \cF \to \cL_\cE \cF,  \]
where $\cL_\cE \cF$ is the cone of $f$, the minimal right $\add \{ \cE[i] \, | \, i\in \bbZ\}$-approximation of $\cF$.

 Dually, the {\em right mutation} $\cR_\cF \cE$ of $\cE$ by $\cF$ is defined by the triangle
\[  \cR_\cF \cE \to \cE \extto{g} \cF^0 \to \cR_\cF \cE[1]  \]
where $\cR_\cF \cE$ is the cocone of $g$, the minimal left $\add \{ \cF[i] \, | \, i\in \bbZ\}$-approximation of $\cE$.

For an exceptional pair $(E,F)$ in $\cH$ there are uniquely determined exceptional objects $L_E F$ and $R_F E$ in $\mathcal{H}$ which, up to translation in $\cD$, coincide with $\cL_\cE \cF$ and $\cR_\cF \cE$, respectively. This is due to the fact that the object $\cL_\cE \cF$ (resp.\ $\cR_\cF \cE$) is indecomposable.

The object $\cE^0$ in the approximation defining left mutation is concentrated in one shift. 
Thus the triangle defining left mutation corresponds to exactly one of the following short exact sequences
\begin{align*}
 (\bfE) & \quad \quad L_E F \into E^0 \stackrel{f}{\onto} F  \\
 (\bfM) & \quad \quad E^0 \stackrel{f}{\into} F \onto L_E F  \\
 (\bfX) & \quad f \colon F  \into L_E F \onto E^0 \phantom{\stackrel{f}{\into}} 
\end{align*}
depending on whether the approximation is an {\bf E}pimorphism, a {\bf M}onomorphism or an e{\bf X}tension, respectively. If the pair is such that $\Hom(E,F)=0$ and $\Ext^1(E,F)=0$, we have that $F \simeq L_E F$ and we call the mutation a {\em transposition}.

The analogous statements hold for right mutation of an exceptional pair $(E,F)$.

 For the rest of this subsection, unless stated otherwise, all
 exceptional sequences are considered to be in $\cH$. For an
 exceptional sequence  $\varepsilon=(E_1,\ldots,E_n)$, left mutation
 $\lambda_i$ and right mutation $\rho_i$ are defined by  
\begin{align*}
  \lambda_i \varepsilon :=& \, (E_1,\ldots,E_{i-1},L_{E_i}E_{i+1},E_i,E_{i+2},\ldots,E_n) \text{ and} \\
  \rho_i \varepsilon :=& \, (E_1,\ldots,E_{i-1},E_{i+1},R_{E_{i+1}}E_i,E_{i+2},\ldots,E_n).
\end{align*}
The mutations $\lambda_i$ and $\rho_i$ are mutually inverse (that is $\lambda_i \rho_i = 1 = \rho_i \lambda_i$), and satisfy the {\em braid relations}: $\lambda_i \lambda_{i+1} \lambda_i = \lambda_{i+1} \lambda_i \lambda_{i+1}$ for $1 \le i < n$ and $\lambda_i \lambda_j = \lambda_j \lambda_i$ for  $|i-j| \ge 2$. These relations define an action of the braid group $B_n$ in $n-1$ generators. This action is transitive on the set of complete exceptional sequences in $\cH$ (see Theorem~\ref{Braid.group.action.transitive}).

\subsubsection{Transitivity of the braid group action}

 In this subsection we consider the following two types of hereditary categories with a  tilting object: The module  category $\mod H$ of a finite dimensional hereditary algebra $H$ of infinite representation type, and the category $\cohX$ of coherent sheaves for some weighted projective line $\bbX$. Whenever we write $\cH$ we do not make a distinction between the two cases.

  The following are results we will use in this paper.

\begin{lemma}[{See \cite[Lemma 4.1]{HR}}] \label{lemma.HR} Let $E$ and $F$ be in $\cH$ and such that one of them is indecomposable. If $\Ext_\cH^1(E,F)=0$, then each  indecomposable morphism $f \in \Hom_\cH (E,F)$ is an epimorphism or a monomorphism.
\end{lemma}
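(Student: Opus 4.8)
The plan is to reduce the statement to a standard fact about hereditary categories: in a hereditary category, a morphism $f \from E \to F$ factors through its image, and the failure of $f$ to be a monomorphism or an epimorphism is controlled by the kernel and cokernel, both of which are again objects of $\cH$. Write $K = \Ker f$, $I = \Image f$ and $C = \Coker f$, so we have short exact sequences $0 \to K \to E \to I \to 0$ and $0 \to I \to F \to C \to 0$. Suppose, for contradiction, that $f$ is neither a monomorphism nor an epimorphism, i.e. $K \ne 0$ and $C \ne 0$. The aim is to produce a nontrivial decomposition $f = f_1 \oplus f_2$ (or more precisely to split off a direct summand from the source or the target on which $f$ acts as zero), contradicting that $f$ is indecomposable as a morphism.

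First I would apply $\Hom_\cH(-, I)$ to the sequence $0 \to I \to F \to C \to 0$ and $\Hom_\cH(K, -)$ to $0 \to K \to E \to I \to 0$, and use the hypothesis $\Ext^1_\cH(E,F) = 0$ together with heredity ($\Ext^{\ge 2} = 0$) to extract the relevant Ext-vanishing. Concretely, from $\Ext^1_\cH(E,F)=0$ and the surjection $E \onto I$ one gets (using the long exact sequence in the second variable, since $\Ext^1_\cH(-, C)$ of the first sequence lands between $\Ext^1_\cH(I,C)$-type terms) that $\Ext^1_\cH(I, C) = 0$, hence the sequence $0 \to I \to F \to C \to 0$ splits: $F \cong I \oplus C$. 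Dually — or by the same bookkeeping in the other variable — one shows $\Ext^1_\cH(C, K)$ or the appropriate term vanishes so that $0 \to K \to E \to I \to 0$ splits: $E \cong K \oplus I$. This is exactly where the hypothesis that one of $E$, $F$ is indecomposable will be invoked: whichever of $E$ or $F$ is assumed indecomposable, the corresponding splitting ($E \cong K \oplus I$ or $F \cong I \oplus C$) forces $K = 0$ or $C = 0$ (since $I \ne 0$ as $f \ne 0$; the case $f = 0$ is trivial), contradicting our assumption.

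To finish, I would note that under these splittings the map $f$ becomes the composite $E \cong K \oplus I \twoheadrightarrow I \hookrightarrow I \oplus C \cong F$, which visibly decomposes as a morphism: the summand $K$ of $E$ is killed, and the summand $C$ of $F$ is disjoint from the image. Since $f$ is an indecomposable morphism, one of these summands must be zero, i.e. $f$ is mono or epi. The main obstacle — and the only subtle point — is the correct routing of the long exact $\Ext$-sequences to deduce the two splittings from the single hypothesis $\Ext^1_\cH(E,F) = 0$; one must be careful that the indecomposability assumption is genuinely used to obtain whichever splitting is available, rather than claiming both splittings unconditionally. I would also double-check the degenerate cases $f=0$ (then $E$ or $F$ indecomposable makes $f$ trivially mono or epi only if the other object is zero — but an indecomposable object is nonzero, so $f=0$ with $E$ indecomposable means $f$ is mono iff $E=0$; more carefully, the statement should be read as excluding $f=0$ or one treats it separately) to make sure the hypotheses are being used as intended.
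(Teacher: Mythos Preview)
The paper does not give its own proof of this lemma; it simply cites \cite[Lemma~4.1]{HR}. So there is nothing to compare against on the paper's side, but your argument has a genuine gap, and in fact the statement as printed cannot be proved.

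Your central step is: ``$\Ext^1_{\cH}(I,C)=0$, hence the sequence $0\to I\to F\to C\to 0$ splits.'' This is the wrong $\Ext$-group. An extension $0\to I\to F\to C\to 0$ represents a class in $\Ext^1_{\cH}(C,I)$, not in $\Ext^1_{\cH}(I,C)$; vanishing of $\Ext^1_{\cH}(I,C)$ says nothing about whether this particular sequence splits. The same reversal occurs in your dual claim about $0\to K\to E\to I\to 0$. Moreover, even the vanishing $\Ext^1_{\cH}(I,C)=0$ is not actually obtained from your long exact sequences: from $\Ext^1_{\cH}(E,C)=0$ and $0\to K\to E\to I\to 0$ you only get that $\Ext^1_{\cH}(I,C)$ is a \emph{quotient} of $\Hom_{\cH}(K,C)$, not that it is zero.

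More fundamentally, the hypothesis as printed in the paper seems to have the arguments of $\Ext^1$ reversed relative to the original Happel--Ringel lemma, which assumes $\Ext^1_{\cH}(F,E)=0$ for a map $f\colon E\to F$. With the hypothesis $\Ext^1_{\cH}(E,F)=0$ the conclusion is simply false: for the Kronecker quiver take $E=P_1$ projective and $F=I_2$ injective; then $\Ext^1(P_1,I_2)=0$, both objects are indecomposable, yet the nonzero map $P_1\to I_2$ with image a regular module is neither mono nor epi. (In all places where the paper invokes the lemma, both $\Ext^1(E,F)$ and $\Ext^1(F,E)$ vanish, so the applications are unaffected.)

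Even with the corrected hypothesis $\Ext^1_{\cH}(F,E)=0$, your strategy of directly splitting one of the two sequences by an $\Ext$-vanishing does not go through: one obtains $\Ext^1(I,E)=0$ and $\Ext^1(F,I)=0$, but neither of these is the group controlling the sequences $0\to K\to E\to I\to 0$ (which lives in $\Ext^1(I,K)$) or $0\to I\to F\to C\to 0$ (which lives in $\Ext^1(C,I)$). The standard argument instead builds an auxiliary object $G$ fitting into both $0\to E\to G\to C\to 0$ and $0\to K\to G\to F\to 0$ (using heredity to lift one extension along the epi $E\twoheadrightarrow I$), and then uses $\Ext^1(F,E)=0$ together with indecomposability and locality of endomorphism rings to force $K=0$ or $C=0$.
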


\begin{lemma}[{\cite[Proof of Theorem 2]{Ringel3}}]\label{Hom.transposition} Let  $\varepsilon=(E_1,\ldots,E_n)$ be an exceptional sequence in $\cH$ and $a<b$ such that there exists $0 \ne f \in \Hom_\cH(E_a,E_b)$ but $\Hom_\cH(E_i,E_j)=0$ for the remaining pairs $(i,j)$ with $a \le i < j \le b$. Then we have {exactly one of} the following.
\begin{enumerate} 
\item The morphism $f$ is mono, and all right mutations in the sequence $\rho_{b-2} \cdots \rho_a$ are transpositions. 
\item The morphism $f$ is epi, and all right mutations in the sequence $\rho_{a+1} \cdots \rho_{b-1}$ are transpositions.
\end{enumerate}
\end{lemma}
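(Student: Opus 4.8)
The plan is to reduce to the case $b = a+1$, i.e.\ to an exceptional pair $(E,F) := (E_a, E_b)$ with $0 \neq f \in \Hom_\cH(E,F)$, and then to propagate the information along the chain of mutations by an induction, using Lemma~\ref{lemma.HR} as the basic tool at each step. First I would observe that the hypothesis ``$\Hom_\cH(E_i,E_j) = 0$ for the remaining pairs $(i,j)$ with $a \le i < j \le b$'' means that for each such pair there are no degree-zero morphisms, and since $(E_1,\ldots,E_n)$ is an exceptional sequence we already know $\Hom_\cH(E_j, E_i[l]) = 0$ for $j > i$; in particular $\Ext^1_\cH(E_j, E_i) = 0$ for $a \le i < j \le b$. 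So most of the intermediate mutations one performs are candidates to be transpositions: a mutation $\rho_k$ applied to a pair $(E_k, E_{k+1})$ with $\Hom(E_k,E_{k+1}) = 0$ and $\Ext^1(E_k, E_{k+1}) = 0$ is by definition a transposition, which merely swaps the two entries and leaves the objects unchanged.

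The key point is to track where the unique nonzero map $f$ ``lives'' as we mutate, and to show it never gets destroyed until the very end. Consider case~(1), where one wants to show $f$ is mono and that $\rho_{b-2}\cdots\rho_a$ consists of transpositions. I would argue by downward induction on the position, pushing $E_b$ to the left past $E_{b-1}, E_{b-2}, \ldots, E_{a+1}$ until it sits immediately to the right of $E_a$. Before any mutation is applied that touches $E_a$, the relevant pairs are of the form $(E_{a+j}, E_b)$ (after $E_b$ has moved left past everything between) or pairs among the untouched $E_{a+1},\ldots,E_{b-1}$; all of these have vanishing $\Hom$ and $\Ext^1$ by hypothesis, hence those mutations are transpositions and in particular do not alter any object — so after $\rho_{b-2}\cdots\rho_{a+1}$ we have the sequence $(\ldots, E_a, E_b, E_{a+1},\ldots,E_{b-1},\ldots)$ with the \emph{same} objects $E_a, E_b$ and the \emph{same} map $f \in \Hom(E_a, E_b)$. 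Now the pair $(E_a, E_b)$ has $\Ext^1_\cH(E_a,E_b) = 0$ (from the exceptional sequence condition) and $E_a$ or $E_b$ indecomposable (both are, being exceptional), so Lemma~\ref{lemma.HR} applies to the indecomposable constituents of $f$: $f$ is either mono or epi. If $f$ is mono, the last mutation $\rho_a$ on the pair $(E_a, E_b)$ is of type $(\bfM)$, hence — one checks from the short exact sequence $E_a^0 \hookrightarrow E_b \twoheadrightarrow L_{E_a}E_b$ together with $\dim\Hom(E_a,E_b)$ being exactly the multiplicity needed — it is in fact a transposition as well (here one uses that the approximation is minimal and that $\Ext^1(E_a,E_b)=0$ forces the pair to be orthogonal or to give $R_{E_b}E_a \simeq E_a$); this establishes~(1). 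Dually, if $f$ is epi one pushes $E_a$ to the \emph{right} past $E_{a+1}, \ldots, E_{b-1}$ using $\rho_{a+1}\cdots\rho_{b-1}$, the intermediate steps are transpositions for the same reason, and the final step is an epimorphism-type mutation which is again a transposition, giving~(2). The dichotomy in Lemma~\ref{lemma.HR} (mono or epi) is exactly what produces the ``exactly one of'' in the statement.

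The main obstacle I anticipate is the bookkeeping in the inductive step: one must verify carefully that after each transposition the hypothesis ``vanishing $\Hom$ and $\Ext^1$ between all relevant remaining pairs'' is \emph{preserved}, so that the next mutation is again legitimately a transposition; this requires knowing that a transposition of a pair $(X,Y)$ (which simply swaps them, since $\Hom(X,Y) = \Ext^1(X,Y) = 0$) does not create new nonzero homomorphisms or extensions among the objects being tracked — which holds because the objects themselves are unchanged and the exceptional-sequence condition is invariant under mutation. A secondary subtlety is the very last mutation: one has to confirm that a mutation of type $(\bfE)$ or $(\bfM)$ under the additional constraint $\Ext^1(E_a,E_b) = 0$ is actually a transposition, which comes down to the remark after the definition of the sequences $(\bfE),(\bfM),(\bfX)$ — namely that $\Hom(E,F) = 0$ and $\Ext^1(E,F) = 0$ together characterize transpositions — combined with the observation that once $E_b$ has been ``used up'' by the monomorphism $f$, the residual pair has the required vanishing. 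I expect the cleanest writeup to cite Lemma~\ref{lemma.HR} once for the mono/epi dichotomy and then to handle cases~(1) and~(2) in parallel by a single symmetric induction.
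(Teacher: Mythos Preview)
Your proposal has a genuine gap at the crucial step, and it also misidentifies what the mutation sequence does.

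\textbf{The missing $\Ext^1$-vanishing.} You assert that the intermediate pairs have ``vanishing $\Hom$ and $\Ext^1$ by hypothesis''. The hypothesis only gives $\Hom_\cH(E_i,E_j)=0$ for $a\le i<j\le b$ with $(i,j)\neq(a,b)$; it says nothing about $\Ext^1$ in the forward direction. The exceptional-sequence axiom yields $\Ext^1_\cH(E_j,E_i)=0$ for $j>i$, which is the \emph{wrong} direction: for the mutation $\rho_k$ on a pair $(X,Y)$ to be a transposition you need $\Ext^1_\cH(X,Y)=0$, not $\Ext^1_\cH(Y,X)=0$. Concretely, to move $E_a$ past $E_i$ (for $a<i<b$) you need $\Ext^1_\cH(E_a,E_i)=0$, and this is not given. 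This is exactly what the paper's proof supplies, and it does so \emph{using} the mono/epi dichotomy rather than deferring it to the end: if $f\colon E_a\to E_b$ is a monomorphism, then applying $\Ext^1_\cH(-,E_i)$ gives an epimorphism $\Ext^1_\cH(E_b,E_i)\twoheadrightarrow\Ext^1_\cH(E_a,E_i)$, and the source vanishes for $i<b$ by the exceptional-sequence condition. So the logic is: first invoke Lemma~\ref{lemma.HR} to decide mono/epi, \emph{then} use that to force the required $\Ext^1$-vanishing, \emph{then} conclude the mutations are transpositions. Your plan runs this backwards and therefore cannot close the gap.

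\textbf{Bookkeeping.} The sequence $\rho_{b-2}\cdots\rho_a$ (applied right-to-left) moves $E_a$ to the right past $E_{a+1},\ldots,E_{b-1}$, landing it immediately before $E_b$; it does not move $E_b$ to the left, and $E_b$ is never touched. In particular there is no ``last mutation $\rho_a$ on the pair $(E_a,E_b)$'': the final mutation is $\rho_{b-2}$ on the pair $(E_a,E_{b-1})$, and it is a genuine transposition because both $\Hom$ and $\Ext^1$ vanish there. Your suggestion that a type-$(\bfM)$ mutation could be ``in fact a transposition as well'' is incorrect: a transposition by definition requires $\Hom=0$, which fails for $(E_a,E_b)$.
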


\begin{proof} For the benefit of the reader, we reproduce the proof. By Lemma~\ref{lemma.HR}, we know that $f$ is mono or epi. 
\begin{enumerate} 
\item If $f$ is mono, we have induced epimorphisms $\Ext^1_\cH(E_b,E_i)\onto \linebreak[4]\Ext^1_\cH(E_a,E_i)$ for all $i$. The first $\Ext$ group vanishes for $i\le b$, and thus the second group vanishes for these $i$. Then we have that both $\Hom_\cH(E_a,E_i)=0=\Ext^1_\cH(E_a,E_i)$ for $a<i<b$, and thus $\rho_{b-2} \cdots \rho_a$ is a sequence of transpositions moving $E_a$ exactly behind $E_b$.
\item Dual to $(a)$. \qedhere
\end{enumerate}
\end{proof}

 \begin{lemma}{{ \cite[proof of Theorem~5]{Ringel3}}}\label{moving.closer.lemma} Let $\varepsilon=(E_{1},\ldots,E_{n})$ be an exceptional sequence in $\cH$ such that $\Ext^{1}(E_{a},E_{b})\ne 0$ with $b-a>1$ minimal. Then there exists a sequence of transpositions that reduce $b-a$ by at least one. 

 \end{lemma}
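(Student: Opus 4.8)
The plan is to work inside the exceptional subsequence $(E_a,E_{a+1},\dots,E_b)$ of $\varepsilon$: any sequence of transpositions of this block extends, by the identity on the remaining entries, to a sequence of transpositions of $\varepsilon$, and since transpositions leave the underlying objects unchanged the relation $\Ext^1(E_a,E_b)\neq 0$ is automatically preserved. By minimality of $b-a$ we have $\Ext^1(E_i,E_j)=0$ for all $a\le i<j\le b$ with $(i,j)\neq(a,b)$; in particular every consecutive pair in the block has vanishing $\Ext^1$, so $(E_i,E_{i+1})$ is a transposable pair precisely when $\Hom(E_i,E_{i+1})=0$. Thus a sequence of transpositions of the block amounts to a reordering obtained by repeatedly swapping adjacent entries between which there is no $\Hom$, and it suffices to produce such a reordering that moves $E_a$ one step to the right, or $E_b$ one step to the left, or slides some $E_k$ with $a<k<b$ past $E_a$ or past $E_b$; each of these lowers the distance between the positions of $E_a$ and $E_b$ by at least one.

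First I would dispose of two easy cases. If $\Hom(E_a,E_{a+1})=0$, a single transposition puts $E_a$ into position $a+1$; dually, if $\Hom(E_{b-1},E_b)=0$, a single transposition puts $E_b$ into position $b-1$. So assume $\Hom(E_a,E_{a+1})\neq 0$ and $\Hom(E_{b-1},E_b)\neq 0$. Here I would first extract structural information. By Lemma~\ref{lemma.HR} an indecomposable morphism $E_a\to E_{a+1}$ is mono or epi; were it mono, applying $\Hom(-,E_b)$ to the resulting short exact sequence and using $\Ext^1(E_{a+1},E_b)=0$ together with the fact that $\cH$ is hereditary would force $\Ext^1(E_a,E_b)=0$, a contradiction, so it is epi. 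Dually, every indecomposable morphism $E_{b-1}\to E_b$ is mono.

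It remains to treat this ``doubly $\Hom$-connected'' configuration, and here I would argue by induction on $b-a$, the base case $b-a=2$ being handled directly using the mono/epi dichotomy just established. The tool is Lemma~\ref{Hom.transposition}: among all pairs $a\le p<q\le b$ with $\Hom(E_p,E_q)\neq 0$ and $q-p\ge 2$, a suitable minimality choice forces the intermediate $\Hom$-spaces to vanish, and then the lemma lets one bring $E_p$ and $E_q$ into adjacent positions by transpositions, after which a strictly shorter block is available for the inductive step. Keeping track of how such moves interact with the positions of $E_a$ and $E_b$, and using the mono/epi information above — together with, when convenient, a numerical argument (via the rank function and Theorem~\ref{rank-is-additive} in the $\cohX$ case, or the Euler form in the $\mod H$ case applied to the short exact sequences produced by Lemma~\ref{lemma.HR}) — to exclude the possibility that no admissible move ever involves an extremal term, one produces the required gap-reducing sequence of transpositions. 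This last point, namely showing that in the doubly $\Hom$-connected case some admissible sequence of transpositions really does touch $E_a$ or $E_b$, is the main obstacle and is where the bulk of the bookkeeping lies.
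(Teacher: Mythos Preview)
Your argument has a genuine gap: the ``doubly $\Hom$-connected'' case is not actually handled. You set up an induction on $b-a$, but the inductive step is only a sketch. Bringing some pair $E_p,E_q$ with $\Hom(E_p,E_q)\neq 0$ into adjacent position via Lemma~\ref{Hom.transposition} does not shorten the block $(E_a,\dots,E_b)$ unless $p=a$ or $q=b$, and you give no reason why such an extremal pair must occur. The appeal to rank additivity is misplaced --- Theorem~\ref{rank-is-additive} concerns tilting sheaves, not arbitrary exceptional sequences --- and the vague reference to ``a numerical argument via the Euler form'' does not substitute for an actual mechanism that forces a transposition to touch $E_a$ or $E_b$. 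As written, the proposal stops exactly where the real content of the lemma begins.

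The paper's proof bypasses all of this with a single clean observation. Take $t$ maximal with $a\le t<b$ and $\Hom(E_a,E_t)\neq 0$. If $t=a$ then $E_a$ is orthogonal (in both $\Hom$ and $\Ext^1$) to every $E_i$ with $a<i<b$, so it slides by transpositions to position $b-1$. If $t>a$, your own argument shows any nonzero $E_a\to E_t$ must be epi (a mono would force $\Ext^1(E_t,E_b)\neq 0$, violating minimality); then composing with any nonzero $E_t\to E_i$ for $t<i\le b$ would contradict the maximality of $t$, so $\Hom(E_t,E_i)=0$, and together with $\Ext^1(E_t,E_i)=0$ (minimality again) one slides $E_t$ by transpositions past $E_b$. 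No induction, no numerical invariants --- the choice of the \emph{maximal} such $t$ is the whole idea, and it is exactly what your approach is missing.
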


 \begin{proof}
{
 We recall the arguments. Let $t$ be maximal with $a \le t < b$ such that $\Hom_\cH(E_a,E_t)\ne 0$.

Assume $t > a$.  Let $0\ne f \in \Hom_\cH(E_a,E_t)$. By Lemma~\ref{lemma.HR} the morphism $f$ is either mono or epi. If $f$ is mono, then we have an induced epimorphism $\Ext^1_\cH(E_t,E_b) \onto \Ext^1_\cH(E_a,E_b)$. Since the latter space is nonzero so is the former one; a contradiction to the minimality of $b-a$. Thus $f$ is epi and $\Hom_\cH(E_t,E_i)=0$ for $t<i\le b$ (otherwise any nonzero map $g \from E_t \to E_i$ can be composed with $f$ to a nonzero map $E_a \to E_i$ contradicting the maximality of $t$). Now since $t>a$, we observe that $\Ext^1_\cH(E_t,E_i)=0$ for $t<i\le b$, and thus the sequence of transpositions $\rho_{b-1} \cdots \rho_{t}$ moves $E_t$ immediately after $E_b$. }
 
 Finally for $t=a$ we have that $\Ext^1_\cH (E_a,E_i)=0$ and $\Hom_\cH(E_a,E_i)=0$ for $a < i < b$, and thus the {sequence of} transpositions $\rho_{b-1} \cdots \rho_a$ move $E_a$ {immediately} before $E_b$.
 \end{proof}

\begin{lemma}[{\cite[Section~6]{Ringel3}}] \label{back.to.orthogonal} Let $\varepsilon=(E_1,\ldots,E_n)$ in $\mod H$  be an exceptional sequence, and $1\le i <n$. Then there exist $t \in \bbZ$ such that $\zeta:=(Z_1,\ldots,Z_n)=\rho_i^t\varepsilon$ satisfies $\Hom_H (Z_i,Z_{i+1})=0$.
\end{lemma}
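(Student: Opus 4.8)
The plan is to work inside the rank-two abelian subcategory generated by the exceptional pair $(E_i, E_{i+1})$ and track how the pair changes under repeated application of $\rho_i$. First I would observe that $\rho_i$ only alters positions $i$ and $i+1$ of $\varepsilon$, so the whole problem reduces to a statement about exceptional pairs: given an exceptional pair $(E,F)$ in $\mod H$, there is some $t \in \bbZ$ with $\rho_F^t$-iteration (interpreted as the pair-level right mutation, moving through $(F, R_F E)$ and back) producing a pair $(Z, Z')$ with $\Hom_H(Z,Z')=0$. If $(E,F)$ is already orthogonal we are done with $t=0$, so assume $\Hom_H(E,F) \ne 0$; by the remark on exceptional pairs and $\Ext^1$, if also $\Ext^1_H(E,F)=0$ then the mutation is a transposition and things are controlled directly, so the interesting case is $\Ext^1_H(E,F) \ne 0$ (the pair generates a genuine generalized Kronecker subcategory).

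The key step is then to pass to the abelian subcategory $\cC(E,F)$ — or rather the smallest exact/abelian subcategory of $\mod H$ containing the pair — which by the discussion following the definition of $\cC(E,F)$ is equivalent to $\mod \bbK \Theta_m$, the category of representations of the generalized Kronecker quiver with $m = \dim \Ext^1_H(E,F)$ arrows. Under this equivalence $E$ and $F$ (suitably ordered) correspond to exceptional representations of $\bbK\Theta_m$, and $\rho_i$ corresponds to the right-mutation operation on exceptional pairs there. So I would reduce to: inside $\mod \bbK\Theta_m$, starting from any exceptional pair, iterated right mutation eventually reaches the orthogonal pair of simples. For $m \le 2$ the Kronecker/tame case this is classical (the exceptional objects are the preprojectives $P_k$ and preinjectives $I_k$, and mutation shifts the index, so a finite number of steps lands on $(S_1, S_2)$). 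For $m \ge 2$ in general the exceptional objects of $\bbK\Theta_m$ again form a single $B_2$-orbit, and one uses that the dimension vectors strictly decrease (in an appropriate sense — e.g.\ the total dimension, or the "slope" moving monotonically toward the simples) under the relevant mutation until orthogonality is reached; this is exactly the content of \cite[Section~6]{Ringel3} and the reason we restrict to $\mod H$ rather than $\cohX$ in this lemma.

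The main obstacle is making the decreasing quantity precise and confirming the process terminates: one must check that as long as $\Hom_H(Z_i, Z_{i+1}) \ne 0$, a single well-chosen $\rho_i$ (or $\rho_i^{-1} = \lambda_i$) strictly decreases $\dim Z_i + \dim Z_{i+1}$ — using Lemma~\ref{lemma.HR} to know the generating morphism is mono or epi, and the short exact sequences $(\bfE)$, $(\bfM)$, $(\bfX)$ to compute the new dimension vector. Since dimension vectors lie in $\bbN^n$ and cannot decrease forever, after finitely many steps $\Hom_H$ vanishes, giving the required $t$. The restriction to finite-dimensional $H$ is essential here, because in $\cohX$ there is no well-ordered "total dimension" available, so one would need a different invariant (this is flagged implicitly by the lemma's hypothesis).
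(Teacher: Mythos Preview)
The paper does not prove this lemma; it is simply cited from \cite{Ringel3}. Your final paragraph captures the correct mechanism (and is essentially Ringel's argument): as long as $\Hom_H(Z_i,Z_{i+1})\neq 0$, one of $\rho_i$ or $\lambda_i$ strictly decreases $\dim Z_i+\dim Z_{i+1}$, and the process terminates. A closely related argument, passing to the rank-two subcategory as in \cite[Theorem~5]{CB}, appears in the paper's proof of Lemma~\ref{back.to.orthogonal_explicit}.

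However, the case analysis in your middle paragraphs is broken, and as written it discards precisely the case that matters. You assume $\Hom_H(E,F)\neq 0$ and then say that if also $\Ext^1_H(E,F)=0$ the mutation is a transposition. That is false: a transposition requires \emph{both} $\Hom$ and $\Ext^1$ to vanish. When $\Hom(E,F)\neq 0$ the mutation is of type $(\bfE)$ or $(\bfM)$, and this is exactly the situation one has to analyze. Conversely, your ``interesting case'' $\Hom_H(E,F)\neq 0$ and $\Ext^1_H(E,F)\neq 0$ is vacuous: for an exceptional pair in a hereditary category at most one of these spaces is nonzero (if $f\colon E\to F$ is nonzero, Lemma~\ref{lemma.HR} makes it mono or epi, and applying $\Hom(-,F)$ resp.\ $\Hom(E,-)$ to the resulting short exact sequence, together with $\Ext^1(E,E)=\Ext^1(F,F)=0$, forces $\Ext^1(E,F)=0$; this is also remarked before Definition~\ref{definition.quiver_except}). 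Consequently the number of arrows in the generalized Kronecker is $m=\dim_\bbK\Hom_H(E,F)$, not $\dim_\bbK\Ext^1_H(E,F)$ as you wrote. Finally, the category $\cC(E,F)$ of Section~\ref{Section.Exceptional_sequences} is defined only for \emph{orthogonal} pairs; the subcategory you want here is the one from \cite[Theorem~5]{CB} (the smallest subcategory containing $E,F$ closed under extensions, kernels of epis and cokernels of monos), which is what the paper invokes in Lemma~\ref{back.to.orthogonal_explicit}. Once these points are fixed, your outline coincides with Ringel's.
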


We say that $\rho_i^t$ is a {\em proper reduction} for $\varepsilon$, provided that $\zeta=\rho_i^t\varepsilon$ satisfies $\Hom_H(Z_i,Z_{i+1})=0$ whereas $\Hom_H(E_i,E_{i+1})\ne 0$. An exceptional sequence is {\em orthogonal} if every exceptional pair {consisting of two (not necessarily consecutive) elements} of the sequence is orthogonal.

\begin{theorem}[{\cite[Theorem 2]{Ringel3}}] \label{Hom.arrow.reduction}
Any exceptional sequence  in $\mod H$ can be shifted by the braid group action to an  orthogonal sequence using only transpositions and proper reductions.
\end{theorem}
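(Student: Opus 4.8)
The plan is to argue by induction on the number
\[
h(\varepsilon):=\#\{(i,j)\mid 1\le i<j\le n,\ \Hom_H(E_i,E_j)\neq 0\},
\]
which vanishes exactly when $\varepsilon=(E_1,\dots,E_n)$ is orthogonal. So suppose $h(\varepsilon)>0$ and choose a pair $(a,b)$ contributing to it with $b-a$ minimal. By this minimality every pair $(i,j)$ with $a\le i<j\le b$ other than $(a,b)$ has $\Hom_H(E_i,E_j)=0$, so Lemma~\ref{Hom.transposition} applies to the block $(E_a,\dots,E_b)$. Picking $0\neq f\in\Hom_H(E_a,E_b)$, Lemma~\ref{lemma.HR} says $f$ is mono or epi, and in either case the matching case of Lemma~\ref{Hom.transposition} gives a string of \emph{transpositions} --- $\rho_{b-2}\cdots\rho_a$ if $f$ is mono, $\rho_{a+1}\cdots\rho_{b-1}$ if $f$ is epi --- after which $E_a$ and $E_b$ occupy consecutive positions, with $E_{a+1},\dots,E_{b-1}$ gathered (in order) on one side of this pair. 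The objects $E_a,E_b$ have not changed, so the nonzero map between them persists, and Lemma~\ref{back.to.orthogonal} provides a proper reduction $\rho^t$ of this consecutive pair to a consecutive pair $(Z,Z')$ with $\Hom_H(Z,Z')=0$.

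What then remains is to see that $h$ has strictly dropped, which closes the induction. A transposition swaps two consecutive objects all of whose $\Hom$- and $\Ext^1$-groups vanish in both directions, so it merely relabels the objects at those two positions and leaves $h$ unchanged; hence the transpositions above do not change $h$. The proper reduction kills the pair $(a,b)$, because $\Hom_H(Z,Z')=0$ whereas the consecutive pair did have a nonzero map, and it touches only $\Hom$-groups involving one of the two positions now holding $Z,Z'$. So the whole matter reduces to proving that \emph{no new $\Hom$-pair appears} when $(E_a,E_b)$ is replaced by $(Z,Z')$. Since $Z,Z'$ lie in the thick subcategory $\langle E_a,E_b\rangle$ of $D^b(H)$, a $\Hom$ between another member $E_k$ of the sequence and $Z$ or $Z'$ is controlled by $\Hom_H(E_k,E_a)$ and $\Hom_H(E_k,E_b)$ together with the corresponding $\Ext^1$-groups; the orthogonality relations of the exceptional sequence only control some of these, and the rest must be forced to vanish --- partly by the fact that the mutations from Lemma~\ref{Hom.transposition} are genuine transpositions, which makes the relevant $\Ext^1$-groups between $E_a$ (resp.\ $E_b$) and $E_{a+1},\dots,E_{b-1}$ vanish, and partly by a preliminary clean-up of $\Ext^1$-pairs using Lemma~\ref{moving.closer.lemma}.

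I expect this last point to be the main obstacle. A proper reduction that annihilates a $\Hom$-pair can, in principle, turn a forward $\Ext^1$-pair near the block into a fresh $\Hom$-pair, so $h$ by itself need not be monotone; the likely remedy is to run the induction on a refined measure that counts $\Hom$-pairs together with a suitable weighting of $\Ext^1$-pairs, and to check that the $\Ext^1$-clean-up via Lemma~\ref{moving.closer.lemma} --- which uses only transpositions --- does not reintroduce $\Hom$-pairs. Once such a measure is shown to decrease strictly at each step, the induction yields an orthogonal exceptional sequence reached from $\varepsilon$ using only transpositions and proper reductions, as claimed.
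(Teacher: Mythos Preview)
The paper does not give its own proof of this theorem; it simply cites \cite[Theorem~2]{Ringel3}. So there is nothing to compare your argument against in the paper itself. That said, your attempt has a genuine gap, which you in fact identify yourself: the count $h(\varepsilon)$ of nonzero $\Hom$-pairs is \emph{not} the right inductive measure, and the sketch you offer for repairing this (``a refined measure that counts $\Hom$-pairs together with a suitable weighting of $\Ext^1$-pairs'') is vague and not obviously workable.

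Ringel's actual argument uses a completely different, and much simpler, quantity: the total Jordan--H\"older length $\sum_{i=1}^n \ell(E_i)$ of the objects in the sequence. Transpositions leave the multiset of objects unchanged, hence preserve this sum. A proper reduction replaces the pair $(E_\ell,E_{\ell+1})$ by another exceptional pair inside the rank-two abelian subcategory $\cC(E_\ell,E_{\ell+1})$, and one checks that the pair of simple objects of this subcategory has strictly smaller total $H$-length than any other exceptional pair in it; since $\Hom(E_\ell,E_{\ell+1})\neq 0$, the starting pair is not the simple pair, so the total length strictly drops. This guarantees termination without ever needing to track whether new $\Hom$-pairs are created. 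The paper alludes to exactly this mechanism later, after Algorithm~\ref{alg.step_2}, where it notes that ``the proper reduction \ldots\ decreases the sum of the lengths of the objects in the exceptional sequence \ldots\ (see \cite[Section~6]{Ringel3}).'' Replace your induction on $h$ by induction on $\sum_i \ell(E_i)$ and the argument goes through cleanly; the bookkeeping about $\Ext^1$-pairs and Lemma~\ref{moving.closer.lemma} becomes unnecessary.
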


\begin{remark}
This theorem is not true for $\cohX$ since by  \cite[Proposition 2.8]{Meltzer} there are  no orthogonal exceptional sequences in $\cohX$.

The proof of the above theorem provides us with an algorithm to remove the $\Hom$-spaces between the objects of an exceptional sequence.
\end{remark}

\begin{theorem}[{\cite[Theorem 3]{Ringel3}}] The orthogonal complete  exceptional sequences in $\mod H$ are precisely those exceptional  sequences which consist of the simple modules.
\end{theorem}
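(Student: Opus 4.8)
The plan is to prove both inclusions, the nontrivial direction ($\Rightarrow$) by induction on $n:=\rank K_0(\mod H)$; I will allow $H$ to be any finite dimensional hereditary $\bbK$-algebra (not necessarily connected), since the induction will drop out of the connected case anyway. For the easy direction ($\Leftarrow$) I would write $H$ Morita equivalent to $\bbK Q$ with $Q$ a finite quiver without oriented cycles, and order its vertices $1,\dots,n$ by a topological sort, so that (in a suitable orientation convention) $\Ext^1_H(S_i,S_j)\neq 0$ forces $i<j$. Then the simple modules satisfy $\Hom_H(S_i,S_j)=0$ for $i\neq j$, $\End_H(S_i)=\bbK$, $\Ext^1_H(S_i,S_i)=0$, and $\Ext^1_H(S_i,S_j)=0$ whenever $j<i$; hence $(S_1,\dots,S_n)$ is a complete exceptional sequence, and it is orthogonal because all $\Hom$-spaces between distinct terms vanish.

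\textbf{Stripping off $E_1$.} For ($\Rightarrow$), let $\varepsilon=(E_1,\dots,E_n)$ be an orthogonal complete exceptional sequence; the case $n\le 1$ is clear. For $n\ge 2$ the exceptional-sequence condition gives $\Hom_H(E_j,E_1)=\Ext^1_H(E_j,E_1)=0$ for $j\ge 2$, so $E_2,\dots,E_n$ all lie in the left perpendicular category $\mathcal{B}:={}^{\perp}E_1=\{X\mid\Hom_H(X,E_1)=\Ext^1_H(X,E_1)=0\}$. By the theory of perpendicular categories (Schofield; Geigle--Lenzing), $\mathcal{B}$ is a wide subcategory of $\mod H$, equivalent to $\mod H'$ for a hereditary algebra $H'$ with $n-1$ simples, and the full inclusion $\mathcal{B}\hookrightarrow\mod H$ is exact, so all $\Ext$-groups computed inside $\mathcal{B}$ agree with those in $\mod H$. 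Thus $(E_2,\dots,E_n)$ is a complete exceptional sequence in $\mathcal{B}$, still orthogonal, and the induction hypothesis says that $E_2,\dots,E_n$ are exactly the simple objects of $\mathcal{B}$. Now I would use orthogonality of $\varepsilon$ once more: $\Hom_H(E_1,S)=0$ for every simple object $S$ of $\mathcal{B}$, and since every object of $\mathcal{B}$ is filtered by these and $\Hom_H(E_1,-)$ is left exact, $\Hom_H(E_1,X)=0$ for all $X\in\mathcal{B}$; together with the definition of $\mathcal{B}$ this yields $\Hom_H(E_1,X)=\Hom_H(X,E_1)=\Ext^1_H(X,E_1)=0$ for all $X\in\mathcal{B}$.

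\textbf{Putting $E_1$ back.} Let $\mathcal{A}$ be the extension closure of $\mathcal{B}\cup\add(E_1)$ in $\mod H$. Because of these vanishings --- the only surviving homological datum between $E_1$ and $\mathcal{B}$ is $\Ext^1_H(E_1,-)$ on $\mathcal{B}$ --- one more iteration of the gluing used for $\cC(E,F)$ in Section~\ref{Section.Exceptional_sequences} shows that $\mathcal{A}$ is again a wide subcategory of $\mod H$, equivalent to $\mod\bbK Q''$ where $Q''$ is the quiver of $H'$ with one extra source vertex (for $E_1$) carrying $\dim_{\bbK}\Ext^1_H(E_1,E_j)$ arrows to the vertex of each $E_j$; in particular its simple objects are exactly $E_1,\dots,E_n$. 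Finally $\mathcal{A}=\mod H$: the terms of the complete exceptional sequence $\varepsilon$ generate $D^b(\mod H)$ as a triangulated category, so the thick subcategory of $D^b(\mod H)$ generated by $\mathcal{A}$ is everything; but $\mathcal{A}$ is a wide (hence hereditary) subcategory, so $D^b(\mathcal{A})\to D^b(\mod H)$ is fully faithful with essential image that thick subcategory, and any $H$-module lying in the image is the degree-zero cohomology of an object of $D^b(\mathcal{A})$, which --- $\mathcal{A}$ being hereditary --- is a direct sum of shifted objects of $\mathcal{A}$, forcing the module into $\mathcal{A}$. Hence $\mathcal{A}=\mod H$, and comparing simple objects gives $\{E_1,\dots,E_n\}=\{S_1,\dots,S_n\}$, i.e. $\varepsilon$ consists of the simple modules.

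\textbf{Where the difficulty is.} The step I expect to be the real obstacle is the very last one: that a wide subcategory of $\mod H$ containing a complete exceptional sequence must equal $\mod H$. This is precisely the point where one genuinely needs that a complete exceptional sequence triangulated-generates $D^b(\mod H)$, rather than only that its classes form a $\bbZ$-basis of $K_0$ --- the latter by itself is too weak (a proper wide subcategory can have full $K_0$-rank). Everything before it is routine bookkeeping with perpendicular categories. A secondary, more technical point is checking that the ``one-directional $\Ext$'' gluing of two wide subcategories in the third paragraph is again wide, i.e. exact abelian and closed under kernels and cokernels in $\mod H$; this is the $n$-fold iterate of the Kronecker-quiver fact already recalled in the paper, but it deserves to be spelled out.
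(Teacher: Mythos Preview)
The paper does not give its own proof of this statement: it is quoted verbatim as \cite[Theorem~3]{Ringel3} and used as a black box, so there is no argument in the paper to compare yours against.

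That said, your proof is correct and is close in spirit to Ringel's original. Two remarks. First, your ``secondary technical point'' --- that the extension closure $\mathcal{A}$ of $\mathcal{B}\cup\add(E_1)$ is wide --- goes through cleanly once you note that in the snake lemma for a morphism $f\colon A\to A'$ in $\mathcal{A}$, the connecting map $\ker(\bar f)\to\coker(f|_B)$ lies in $\Hom_H(\add E_1,\mathcal{B})=0$ and hence vanishes; so $\ker f$ and $\coker f$ are again extensions of an object of $\add(E_1)$ by one of $\mathcal{B}$. Second, your identification of the hard step is accurate: one really needs that a complete exceptional sequence thickly generates $D^b(\mod H)$, not just that it gives a $\mathbb{Z}$-basis of $K_0$. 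Your derived-category argument for $\mathcal{A}=\mod H$ (fully faithful embedding $D^b(\mathcal{A})\hookrightarrow D^b(\mod H)$, essential image equals the whole thing, then read off degree~$0$) is a clean way to finish. An alternative, slightly more module-theoretic route is to observe that the indecomposable projectives of $\mathcal{A}$ form a tilting module in $\mod H$ whose endomorphism ring is hereditary, and then invoke the fact that such a tilting module is a progenerator; but this is not shorter.
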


\begin{theorem}[{\cite{CB,Meltzer,Ringel3}}]\label{Braid.group.action.transitive} The braid group acts transitively on the set of complete exceptional sequences in $\cH$.
\end{theorem}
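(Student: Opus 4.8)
The plan is to choose, for each of the two kinds of $\cH$, one ``reference'' complete exceptional sequence and to show that the braid group action carries every complete exceptional sequence to it; transitivity then follows at once. The two kinds have to be handled separately, because $\cohX$ admits no orthogonal exceptional sequence and so needs an essentially different treatment; the $\mod H$ argument is essentially Ringel's, the $\cohX$ argument Meltzer's.

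Assume first $\cH=\mod H$. The goal is to push an arbitrary complete exceptional sequence to a sequence of simple modules. By Theorem~\ref{Hom.arrow.reduction}, transpositions and proper reductions --- both realized by the braid group action --- carry any complete exceptional sequence to an orthogonal one, and by \cite[Theorem~3]{Ringel3} an orthogonal complete exceptional sequence consists exactly of the simple $H$-modules in some order $\sigma$. So it remains to connect any two orderings $(S_{\sigma(1)},\ldots,S_{\sigma(n)})$ that are exceptional. Since $\Hom_H(S_i,S_j)=0$ for $i\neq j$, while $\dim_\bbK \Ext_H^1(S_i,S_j)$ is governed by the arrows between $i$ and $j$, such an ordering is exceptional precisely when it is compatible with the partial order on the vertices of $Q$ generated by the arrows (a genuine partial order, since the quiver of a hereditary algebra has no oriented cycle); that is, $\sigma$ ranges over the linear extensions of this poset. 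If two vertices are adjacent in a linear extension and incomparable, there is no arrow between them in either direction, so the corresponding exceptional pair has $\Hom=\Ext^1=0$ in both directions; hence $\lambda_k$ acts on the sequence as a transposition and simply swaps the two simples, producing another linear extension. Using the classical fact that the linear extensions of a finite poset are connected under adjacent transpositions of incomparable elements (bubble the minimal element of the target extension to the front, then induct), I conclude that all sequences of simple modules form a single braid orbit, which finishes this case.

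Assume now $\cH=\cohX$. The previous strategy is unavailable, since (as recorded in the remark following Theorem~\ref{Hom.arrow.reduction}) there is no orthogonal exceptional sequence here, so one cannot kill all $\Hom$- and $\Ext^1$-spaces at once. Following Meltzer, I would instead take as reference the complete exceptional sequence underlying the squid tilting sheaf $T_{\sq}$ --- or, up to further mutation, the canonical tilting sheaf $T_{\can}$ --- with $\cO$ and $\cO(\vc)$ surrounding the blocks of torsion sheaves $S_i^{(j)}$, and drive an arbitrary complete exceptional sequence towards it in stages: apply Lemma~\ref{moving.closer.lemma} together with Lemma~\ref{Hom.transposition} to shrink the ``spread'' of the pairs with nonzero $\Ext^1$ and to separate the torsion-free summands from the torsion summands; use the rank function on $K_0\bbX$ --- a linear functional that is easy to track through the mutation triangles, and which, recall, is additive on tilting sheaves by Theorem~\ref{rank-is-additive} --- to push the vector-bundle summands into a line-bundle configuration and to control the number of rank-zero summands; reduce the torsion part tube by tube to the standard chains $S_i^{(1)},\ldots,S_i^{(p_i-1)}$; and finally twist by a suitable line bundle. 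The main obstacle is precisely this second case: with no reduction to simple objects available, the entire argument must be organized around the numerical rank invariant and the torsion/torsion-free dichotomy of $\cohX$ rather than around orthogonality, and this bookkeeping --- carried out in detail by Meltzer in \cite{Meltzer} --- is the substantive content of the theorem, so I would cite it rather than reproduce it in full.
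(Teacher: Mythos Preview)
The paper's own proof is purely bibliographic: it cites \cite{CB} and \cite{Ringel3} for $\mod H$ and \cite{Meltzer} for $\cohX$, without reproducing any argument. Your proposal follows the same case division and ultimately defers to the same sources, so in that sense it agrees with the paper; the difference is that you supply an outline of what those references do.

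Your $\mod H$ sketch is correct and is essentially Ringel's argument (reduce to an orthogonal sequence via Theorem~\ref{Hom.arrow.reduction}, identify it with a sequence of simples, then connect the admissible orderings of the simples by transpositions using the linear-extensions-of-a-poset fact).

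One caution in your $\cohX$ outline: ``finally twist by a suitable line bundle'' is not a braid group operation and cannot appear as a step in a transitivity proof. Meltzer's actual endgame, after rank reduction yields a sequence of the right shape, is to show that any two such sequences are connected by further braid moves --- no external twist is invoked. (The twist-by-a-line-bundle ambiguity that appears later in this paper, in Theorem~\ref{theorem.reconstruct_from_Q+rk}, concerns a different question: recovering $T$ from combinatorial data, not braid transitivity.) Since you close by citing \cite{Meltzer} for the details anyway, this is a slip in the summary rather than a gap in your proof.
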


\begin{proof} For $\mod H$ this was done in \cite{CB} and generalized to hereditary Artin algebras (not necessarily over an algebraically closed base field) in \cite[Section~7]{Ringel3}. For $\cohX$ this was done in \cite[Section~4]{Meltzer}.
\end{proof} 

An exceptional sequence $\varepsilon=(E_1,\ldots,E_n)$ in $\cH$ is said to be {\em strongly exceptional} provided that we have $\Ext^1_\cH(E_i,E_j)=0$ for all $i,j$. A strongly exceptional sequence that is complete will be called a {\em tilting sequence}.

\begin{theorem}[{\cite[Theorem 5]{Ringel3}}]\label{Ext.arrow.reduction} Any exceptional sequence in  $\mod H$ can be shifted by the braid group action to a strongly exceptional sequence using only transpositions and right mutations of type $(\bfX)$.
\end{theorem}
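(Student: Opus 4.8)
The plan is to eliminate the nonzero $\Ext^1$-groups between the members of $\varepsilon=(E_1,\dots,E_n)$ one at a time and to induct on their number. The starting point is a dichotomy for exceptional pairs in $\mod H$: \emph{if $(E,F)$ is an exceptional pair, then at most one of $\Hom_H(E,F)$ and $\Ext^1_H(E,F)$ is nonzero.} Indeed, the minimal right $\add\{E[i]\mid i\in\bbZ\}$-approximation of $F$ used to define the left mutation of $F$ by $E$ is concentrated in one shift, say $E[s]^{\oplus k}$; since $H$ is hereditary and $\Ext^1_H(E,E)=0$ we have $\Hom_\cD(E[i],E[s]^{\oplus k})=0$ for $i\ne s$, so the approximation property forces $\Hom_\cD(E[i],F)=0$ for all $i\ne s$, and because $H$ is hereditary this leaves at most one of $\Hom_H(E,F)$ and $\Ext^1_H(E,F)=\Hom_\cD(E,F[1])$ nonzero. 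Two consequences. First: if $\Ext^1_H(E_i,E_{i+1})\ne 0$ then automatically $\Hom_H(E_i,E_{i+1})=0$, the triangle defining $R_{E_{i+1}}E_i$ rotates to a short exact sequence $0\to E_{i+1}^{\oplus e}\to R_{E_{i+1}}E_i\to E_i\to 0$ with $e=\dim_\bbK\Ext^1_H(E_i,E_{i+1})$, and $\rho_i$ is precisely a right mutation of type $(\bfX)$. Second: a $\rho_i$ at a consecutive pair with $\Hom_H(E_i,E_{i+1})\ne 0$ is a right mutation of type $(\bfE)$ or $(\bfM)$, hence forbidden. So the permitted moves are exactly the $\rho_i$ applied to a consecutive pair with $\Hom_H(E_i,E_{i+1})=0$.

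Let $N(\varepsilon)$ be the number of unordered pairs of members of $\varepsilon$ with nonzero $\Ext^1$. Transpositions leave the objects of $\varepsilon$ unchanged, hence preserve $N$, and $\varepsilon$ is strongly exceptional iff $N(\varepsilon)=0$; I induct on $N(\varepsilon)$. If $N(\varepsilon)>0$, I first use transpositions to make \emph{every} nonzero $\Ext^1$ occur between consecutive members: as long as some $\Ext^1$-arrow has length $>1$, take one of minimal length and apply Lemma~\ref{moving.closer.lemma} (pure transpositions) to shorten it, and iterate. In the resulting configuration let $(E_m,E_{m+1})$ be the rightmost consecutive $\Ext^1$-arrow; then $\Ext^1_H(E_j,E_{m+1})=0$ for $j<m$, and $\Ext^1_H(E_m,E_k)=0=\Ext^1_H(E_{m+1},E_k)$ for $k>m+1$, because all other $\Ext^1$-arrows are consecutive and none lies to the right of position $m$. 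Now apply the type-$(\bfX)$ mutation $\rho_m$: $E_m$ is replaced by $W:=R_{E_{m+1}}E_m$ with $0\to E_{m+1}^{\oplus e}\to W\to E_m\to 0$. Feeding this sequence into $\Hom_H(E_j,-)$ and $\Hom_H(-,E_k)$ and using the dichotomy at each pair yields $\Ext^1_H(E_{m+1},W)=0$, $\Ext^1_H(E_j,W)\cong\Ext^1_H(E_j,E_m)$ for $j<m$, and $\Ext^1_H(W,E_k)=0$ for $k>m+1$; thus the arrow between positions $m$ and $m+1$ has been destroyed and no new nonzero $\Ext^1$-pair has appeared, so $N$ drops by exactly one and the induction closes.

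The step I expect to be the real obstacle is the rearrangement into the ``all $\Ext^1$-arrows consecutive'' configuration using transpositions only. A transposition is available only at a pair with vanishing $\Hom$ and vanishing $\Ext^1$, so a nonzero $\Hom$ between members can obstruct the required moves, and it is moreover not evident that iterating Lemma~\ref{moving.closer.lemma} terminates, since its transpositions may lengthen other $\Ext^1$-arrows. Settling this is the delicate combinatorial heart of the proof; it rests on the interplay of Lemmas~\ref{lemma.HR}, \ref{Hom.transposition} and \ref{moving.closer.lemma} — in effect Ringel's analysis behind the cited statement. Everything else (identifying the permitted moves, and the bookkeeping that one well-placed type-$(\bfX)$ mutation lowers $N$) is routine once the dichotomy for exceptional pairs is in hand.
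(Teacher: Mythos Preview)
The paper itself does not prove this statement; it cites Ringel and later implements the idea as Algorithm~\ref{alg.step_3}. That algorithm --- essentially Ringel's procedure --- differs from your plan in a crucial respect: it never tries to make \emph{all} $\Ext^1$-arrows consecutive before mutating. It repeatedly (i) iterates Lemma~\ref{moving.closer.lemma} until \emph{some} $\Ext^1$-pair is adjacent, (ii) applies one type-$(\bfX)$ right mutation there, and (iii) starts over, with termination deferred to Ringel's original argument rather than to a simple count $N$ of nonzero $\Ext^1$-pairs.

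Your rearrangement step is where the genuine gap lies, and it is sharper than the termination worry you flag. Lemma~\ref{moving.closer.lemma} applies only to a pair of \emph{globally} minimal distance $b-a>1$: its proof uses that minimality to exclude $\Ext^1(E_t,E_b)\ne 0$ for $a<t<b$. Hence the moment one $\Ext^1$-arrow has become adjacent, the lemma gives you no purchase on the remaining non-adjacent ones --- and nothing guarantees the transpositions you would need are even available, since an intervening nonzero $\Hom$ blocks them. So the configuration ``all $\Ext^1$-arrows consecutive, pick the rightmost'' cannot be reached with the tools you invoke, and with it your clean $N\mapsto N-1$ bookkeeping is unsupported. (The dichotomy for exceptional pairs, the identification of the permitted moves, and the local long-exact-sequence calculations once one \emph{is} in that configuration are all correct.)
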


\begin{remark}
The proof of this statement provides us with an algorithm to reduce the $\Ext$-spaces between the objects of an exceptional sequence.
\end{remark}

As a special case of the above theorem we have the following lemma, which gives us a way to construct the indecomposable injective modules from the simple modules in $\mod H$ via mutation of exceptional sequences.

\begin{lemma} \label{from.simples.to.injectives.lemma}
Let $\cS=(S_1,\ldots,S_{n})$ and $\cI=(I_n,\ldots,I_{1})$ be the exceptional sequences of the simple and the injective $H$-modules for a hereditary algebra $H$, respectively. Moreover, let $\sigma_i= \lambda_i \cdots \lambda_{n-1}$ for $1\le i \le n-1$ and denote by $\sigma = \sigma_{n-1} \cdots \sigma_1$ the composition of these sequences of left mutations. Then $\sigma \cdot \cS = \cI$. Furthermore, the sequence $\sigma$ is just made up of left mutations of type $(\bfX)$.
\end{lemma}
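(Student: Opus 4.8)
The plan is to prove the statement by induction, peeling off one ``layer'' of left mutations $\sigma_i = \lambda_i \cdots \lambda_{n-1}$ at a time and checking that it transforms the tail of the current sequence in the expected way. Concretely, recall that for a hereditary algebra $H$ the simple modules $(S_1,\ldots,S_n)$, ordered so that $\Hom_H(S_i,S_j)=0=\Ext^1_H(S_i,S_j)$ for $i>j$ (i.e.\ the ordering compatible with the arrows of $Q^{\op}$), form a complete exceptional sequence, and dually the injectives $(I_n,\ldots,I_1)$ form one. The key structural fact I would use is that each $S_j$ is the socle of $I_j$, and more precisely that there is, for each $j$, a short exact sequence expressing how $I_j$ is built from $S_j$ together with injectives $I_k$ with $k<j$; equivalently, $L_{S_j} I_k$ for the relevant $k$ recovers exactly the next injective in line. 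This is the content one extracts from Theorem~\ref{Ext.arrow.reduction} read in the socle-filtration direction.

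The main claim to establish by induction on $i$ (downward from $n-1$ to $1$, or equivalently on the number of layers applied) is the following intermediate assertion: after applying $\sigma_{n-1} \cdots \sigma_{i+1}$ to $\cS$, the resulting exceptional sequence has the form $(S_1,\ldots,S_i, I_n, I_{n-1},\ldots,I_{i+1})$ — that is, the first $i$ entries are still the simples $S_1,\ldots,S_i$ and the last $n-i$ entries are already the injectives $I_n,\ldots,I_{i+1}$ in that order. Granting this, I then apply $\sigma_i = \lambda_i \lambda_{i+1}\cdots\lambda_{n-1}$ to this sequence: the effect of $\lambda_{n-1}$ is to compute $L_{?}?$ at position $n-1$, then $\lambda_{n-2}$ at position $n-2$, and so on down to $\lambda_i$; I would track the single ``moving'' object starting from $S_i$, showing that each successive left mutation replaces it by the next injective $I_{i+1}, I_{i+2},\ldots$ until it arrives at the front as $I_{i+1}$ and the block $I_n,\ldots,I_{i+2}$ has shifted one slot to the right, leaving $(S_1,\ldots,S_{i-1}, I_n,\ldots,I_{i+1})$. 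Iterating down to $i=1$ yields $\sigma\cdot\cS = (I_n,\ldots,I_1) = \cI$. The base case $i=n-1$ is the single layer $\sigma_{n-1}=\lambda_{n-1}$ applied to $\cS=(S_1,\ldots,S_{n-1},S_n)$: here $L_{S_{n-1}}S_n$ must be checked to be $I_n$, which holds because $S_n$ is injective (it is the socle of $I_n = S_n$ when $n$ is a sink of $Q^{\op}$; in general one re-indexes so that this is the case, or argues via the appropriate exact sequence).

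For the ``furthermore'' part — that every mutation occurring is of type $(\bfX)$, i.e.\ $f\colon F \into L_E F \onto E^0$ — I would verify at each step that the relevant pair $(E,F)=(S_j, \cdot)$ satisfies $\Hom(E,F)=0$ while $\Ext^1(E,F)\ne 0$: a nonzero $\Hom$ from a simple $S_j$ is always a monomorphism, and since the objects to its right in the current sequence are injectives $I_k$ with $k>j$ (hence $S_j$ is not a submodule of $I_k$, as $\soc I_k = S_k \ne S_j$), there are no nonzero homomorphisms, ruling out types $(\bfE)$ and $(\bfM)$; and transposition is excluded since $\Ext^1(S_j, I_k)\ne 0$ for the first injective that ``uses'' $S_j$ in its construction. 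So the surviving option is exactly $(\bfX)$. Alternatively, one can simply invoke Theorem~\ref{Ext.arrow.reduction}: since the target $\cI$ is a tilting sequence (the injectives form a strongly exceptional complete sequence) and $\sigma$ consists only of left mutations and transpositions taking $\cS$ to $\cI$, and since transpositions do not occur here because the relevant $\Ext^1$'s are nonzero, every mutation is of type $(\bfX)$.

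The step I expect to be the main obstacle is the bookkeeping in the inductive step: correctly identifying, at each of the $n-i$ sub-mutations inside $\sigma_i$, which exceptional pair is being mutated and confirming that the cone/short exact sequence is precisely the one expressing $I_{m+1}$ as an extension of $I_m$ by (a power of) $S_i$ — in other words, matching the abstract mutation triangle with the concrete socle-filtration short exact sequence $0 \to S_i^{a} \to I_m \to (\text{something in the injective hull of } S_i^{a}/\ldots) \to 0$ built from the arrows of $Q$ at vertex $i$. Getting the multiplicities and the direction of the maps right, and handling the case $t>1$ of multiple arrows between vertices cleanly, is where the care is needed; everything else is a formal induction.
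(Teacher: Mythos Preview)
Your overall strategy of inducting on the layers $\sigma_i$ is reasonable, but you have misread the order of composition, and this derails the entire argument. The paper writes $\sigma = \sigma_{n-1}\cdots\sigma_1$, which under the usual convention means that $\sigma_1$ is applied \emph{first}. A direct check for $A_3$ with $Q\colon 1\to 2\to 3$ (so $(S_1,S_2,S_3)$ is exceptional and $I_3=P_1$, $I_2$ has dimension vector $(1,1,0)$, $I_1=S_1$) shows that $\sigma_1\cS=(I_3,S_1,S_2)$ and $\sigma_2\sigma_1\cS=(I_3,I_2,I_1)$, whereas your ordering $\sigma_1\sigma_2\cS$ gives $(S_3,S_1,P_2)$, which is not $\cI$. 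So your intermediate assertion is the wrong one: after $\sigma_i\cdots\sigma_1$ one obtains
\[
(I_n,\,I_{n-1},\,\ldots,\,I_{n-i+1},\,S_1,\,\ldots,\,S_{n-i}),
\]
with the injectives accumulating at the \emph{front} and the remaining simples at the back. Consequently your base case is also wrong: $\sigma_{n-1}=\lambda_{n-1}$ applied to $\cS$ does \emph{not} produce any $I_n$; rather $\sigma_1=\lambda_1\cdots\lambda_{n-1}$ carries $S_n$ all the way to position~$1$, and it is $L_{S_1}L_{S_2}\cdots L_{S_{n-1}}S_n$ that must be identified with $I_n$. (Note also that with the given ordering vertex $n$ is a \emph{sink} of $Q$, so $S_n$ is projective, not injective, contrary to what you write.)

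Your verification that every step is of type $(\bfX)$ also needs to be redone: the exceptional pair at each sub-step is $(S_j,\,X)$ where $X=L_{S_{j+1}}\cdots L_{S_{k-1}}S_k$ is the moving object on its way to becoming $I_k$, not a pair $(S_j,I_k)$ with $I_k$ already an injective. One checks inductively that $\Hom(S_j,X)=0$ (equivalently $S_j\notin\soc X$), so the step is type~$(\bfX)$ or a transposition; and $X$ coincides with the injective hull of $S_k$ inside the Serre subcategory generated by $S_j,\ldots,S_k$, from which $L_{S_j}X$ is the injective hull in the next larger Serre subcategory. This is the bookkeeping that actually needs to be carried out.

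For comparison, the paper does not give a detailed argument at all: it simply records the lemma as a special case of Theorem~\ref{Ext.arrow.reduction} (Ringel), whose proof already provides the required sequence of transpositions and type-$(\bfX)$ right mutations; read from the other side, applying that algorithm to $\cS$ yields exactly the prescribed $\sigma$.
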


\section{Sinks and sources in hereditary categories}
\label{section.sink.source}

In this section we introduce the notion of being a sink or a source for an indecomposable summand of a tilting object in $\cH$. This has been done for $\cH = \coh \mathbb X$ in \cite{Hubner1}, but here we extend this to any hereditary category $\cH$ with a tilting object.

\begin{definition}\label{definition.sink_source}
Let $T$ be a tilting object in $\cH$ and let $T_i$ be an 
indecomposable summand of $T$. 
\begin{enumerate}
\item We call $T_i$ a \emph{source} if there is a non-split
  monomorphism $T_i \to X$ for some $X\in\add T$.
\item We call $T_i$ a \emph{sink} if there is a non-split
  epimorphism $Y \to T_i$ for some $Y\in\add T$.
\end{enumerate}
\end{definition}

\begin{remark}
Later in this section we will give other characterizations of
sinks and sources. In particular, we will see that these properties are
mutually exclusive. Moreover, in the case when $H$ has no nonzero injectives, every indecomposable summand of a tilting object is either a source or a sink (see Proposition~\ref{sink.source}).
\end{remark}

The indecomposable summands of a tilting object which are either a sink or a source are precisely those in which one can mutate. That is, if an indecomposable summand of a tilting object is a sink (or a source), then we can always exchange it with another one to obtain a new tilting object. This is illustrated in the following proposition (see \cite{RiedtmannSchofield}).

\begin{proposition}
Let $T=T_i\oplus \bar T$ be a tilting object in $\cH$ where the summand
$T_i$ is indecomposable. Then we have the following.

\begin{enumerate}
\item The summand $T_i$ is a source if and only if the minimal left
  $\add(\bar T)$-approximation $f \from T_i \to  X$ is a
  monomorphism. In this case, for $T_i^* := \Coker(f)$, the object $T_i^* \oplus \bar
  T$ is a tilting object, and $T_i^*$ is a sink of this tilting object.
\item The summand $T_i$ is a sink if and only if the minimal right
  $\add(\bar T)$-approximation $g \from Y \to T_i$ is an
  epimorphism. In this case, for $T_i^* := \Ker(g)$, the object $T_i^* \oplus \bar T$ is a tilting object, and $T_i^*$ is a source of this tilting object. 
\end{enumerate}
\end{proposition}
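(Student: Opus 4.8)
The plan is to prove (1); part~(2) follows by the evident dual argument, in which monomorphisms become epimorphisms, sources become sinks, and left approximations become right approximations. Throughout I may assume $T$ is basic, so that $T_i\notin\add\bar T$, and I will use two standard facts: each indecomposable summand of a tilting object is exceptional, hence has endomorphism ring $\bbK$; and the classes of the indecomposable summands of a tilting object form a $\bbZ$-basis of $K_0(\cH)$.

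For the equivalence in (1), the implication ``$\Leftarrow$'' is immediate: a monomorphism $f\colon T_i\to X$ with $X\in\add\bar T$ cannot split, for otherwise $T_i$ would be a summand of $\bar T$; and since $\add\bar T\subseteq\add T$, this shows $T_i$ is a source. For ``$\Rightarrow$'', let $u\colon T_i\to M$ be a non-split monomorphism with $M\in\add T$, and write $M=T_i^a\oplus M'$ with $M'\in\add\bar T$. Each component $T_i\to T_i$ of $u$ is $0$ or an isomorphism since $\End_\cH(T_i)=\bbK$, and an isomorphic component would split $u$; hence all these components vanish and $u$ factors through a nonzero monomorphism $u'\colon T_i\to M'$. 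As $f$ is a left $\add\bar T$-approximation, $u'=gf$ for some $g$, so $\Ker f\subseteq\Ker u'=0$, and $f$ is a monomorphism.

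For the remaining assertions of (1), assume $f\colon T_i\to X$ is a monomorphism, put $T_i^*:=\Coker f$, and consider the short exact sequence $0\to T_i\xrightarrow{f}X\to T_i^*\to 0$; here $T_i^*\neq 0$, since otherwise $f$ is an isomorphism and $T_i\in\add\bar T$. I would check $\Ext^1_\cH(T_i^*\oplus\bar T,\,T_i^*\oplus\bar T)=0$ block by block. Indeed $\Ext^1(\bar T,\bar T)=0$ is given; applying $\Hom(\bar T,-)$ to the sequence, and using $\Ext^1(\bar T,X)=0$ and $\Ext^2_\cH=0$, gives $\Ext^1(\bar T,T_i^*)=0$; applying $\Hom(-,\bar T)$, and using that $\Hom(X,\bar T)\to\Hom(T_i,\bar T)$ is surjective because $f$ is an approximation (and $\Ext^1(X,\bar T)=0$), gives $\Ext^1(T_i^*,\bar T)=0$; hence also $\Ext^1(T_i^*,X)=0$, and applying $\Hom(T_i^*,-)$ gives $\Ext^1(T_i^*,T_i^*)=0$. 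Thus $T_i^*\oplus\bar T$ is rigid. Since $[T_i^*]=[X]-[T_i]$ in $K_0(\cH)$ and $[X]$ lies in the span of the classes of the indecomposable summands of $\bar T$, the classes of the indecomposable summands of $T_i^*\oplus\bar T$ span $K_0(\cH)$; as a rigid object has at most $\rank K_0(\cH)$ non-isomorphic indecomposable summands, $T_i^*\oplus\bar T$ has exactly $\rank K_0(\cH)$ of them, and in particular $T_i^*\notin\add\bar T$. Left-minimality of $f$ then forces $T_i^*$ to be indecomposable, so $T_i^*\oplus\bar T$ is a tilting object; and the canonical epimorphism $X\to T_i^*$ is non-split (as $T_i^*\notin\add\bar T$) and has $X\in\add(T_i^*\oplus\bar T)$, so $T_i^*$ is a sink of $T_i^*\oplus\bar T$. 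Part~(2) is proved dually.

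The homological bookkeeping above is routine; the one step carrying real content is that $\Coker f$ is \emph{indecomposable}, equivalently that the almost complete tilting object $\bar T$ has a unique complement besides $T_i$. For this I would appeal to the tilting-mutation theory of \cite{RiedtmannSchofield} (and, for $\cH=\coh\mathbb X$, to the corresponding statement of H\"ubner, cf.\ \cite{Hubner1}) rather than reprove it, isolating it as a separate lemma; the point to watch is that the ``no direct summand in $\add\bar T$'' and ``multiplicity one'' parts of that indecomposability are precisely what left-minimality of the approximation provides.
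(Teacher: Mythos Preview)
Your proof is correct and follows essentially the same approach as the paper: both establish the equivalence by factoring a given non-split monomorphism through the approximation, then verify the Ext-vanishing for $\bar T\oplus T_i^*$ using the exact sequence, with the remaining assertions resting on Riedtmann--Schofield. Your write-up is in fact more detailed than the paper's (you check $\Ext^1(T_i^*,T_i^*)=0$, spell out the $K_0$ count, and verify the sink claim for $T_i^*$), and note that your $K_0$-basis argument together with left-minimality already yields the indecomposability of $T_i^*$, so the separate appeal to \cite{RiedtmannSchofield} in your final paragraph is not strictly needed.
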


\begin{proof}
If $f$ is a monomorphism then clearly $T_i$ is a source.
Assume that $T_i$ is a source. Let $h \from T_i \to Z$ be a non-split monomorphism with $Z \in \add(T)$. Since $h$ is non-split and $\End(T_i) = \mathbb{K}$, we may assume that there is no summand $T_i$ in $Z$. Thus $Z \in \add(\bar T)$. Now observe that $h$ factors through the $\add(\bar T)$-approximation $f$. Hence $f$ is also a monomorphism and we obtain a short exact sequence
\[ T_i \stackrel{f}{\into} X \stackrel{g}{\onto} T_i^* \]
where $T_i^*:=\Coker(f)$. In order to check that $\bar T \oplus T_i^*$
is a tilting object, we only need to verify $\Ext^1(\bar T,
T_i^*)=0=\Ext^1(T_i^*,\bar T)$, since the number of indecomposable
summands is right. This follows by applying $\Hom(\bar T, -)$ and
$\Hom(- ,\bar T)$ to the sequence above and using that $\Hom(f,\bar
T)$ is an epimorphism. This proves (a). Part (b) is dual.
\end{proof}

We now introduce some technical lemmas that will allow us to give
other characterizations of the sink/source property for an indecomposable summand of a tilting object in any hereditary category.

Let $\cH$ be a hereditary category with a tilting object $T$, and let $\Lambda = \End_{\cH}(T)$. We can then identify $D^b(\Lambda)$ with $D^b(\cH)$ via the {mutually inverse} equivalences
\begin{align*}
\R\Hom(T,-) & \colon D^b(\cH) \rightarrow D^b(\mod \Lambda) \mbox{ and}\\
T \otimes_{\Lambda}^{\L}- & \colon D^b(\mod \Lambda) \rightarrow D^b(\cH).
\end{align*}
Then we have the following:

\begin{lemma} \label{lemma.right_approx_mono_epi}
Let $T = \bar{T} \oplus T_i$ such that $T_i$ is indecomposable, and let $S_i = \Hom_{\cH}(T,T_i)/\rad(T,T_i)$ be the simple module corresponding to the projective module induced by $T_i$ in $\mod \Lambda$. Then the minimal right $\add \bar{T}$-approximation of $T_i$ is a monomorphism if and only if $T \otimes_{\Lambda}^{\L} S_i \in \cH$, and an epimorphism if and only if $T \otimes_{\Lambda}^{\L} S_i \in \cH[1]$.
\end{lemma}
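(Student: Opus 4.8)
The plan is to compute $T \otimes_\Lambda^{\L} S_i$ by resolving $S_i$ over $\Lambda$ and applying the derived tensor functor, then read off where the resulting complex in $D^b(\cH)$ lives in terms of the approximation map. Let me set up the minimal projective presentation of $S_i$. Writing $P_j = \Hom_\cH(T, T_j)$ for the indecomposable projective $\Lambda$-modules and $P_i = \Hom_\cH(T,T_i)$ the one with top $S_i$, the radical $\rad P_i = \rad_\cH(T,T_i)$ is generated by the image of a minimal right $\add \bar T$-approximation $g \from Y \to T_i$ (with $Y \in \add \bar T$), since any non-isomorphism ending in $T_i$ from $\add T$ factors through $\add\bar T$ as $\End(T_i)=\bbK$. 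Thus one has an exact sequence $\Hom_\cH(T,Y) \to P_i \to S_i \to 0$, and since $\Hom_\cH(T,-)$ is full and faithful on $\add T$ and $\cH$ is hereditary, the kernel $\Hom_\cH(T,\Ker g)$ is again projective. So $S_i$ has projective resolution
\[
0 \to \Hom_\cH(T, \Ker g) \to \Hom_\cH(T, Y) \to \Hom_\cH(T, T_i) \to S_i \to 0
\]
(truncated appropriately if $g$ is mono or epi).

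Next I would apply $T \otimes_\Lambda^{\L} -$. Since $T \otimes_\Lambda \Hom_\cH(T, T_j) \cong T_j$ naturally and the $\Hom_\cH(T,-)$ of objects of $\add T$ are projective, the derived tensor product is computed by applying $T \otimes_\Lambda -$ termwise, giving the complex
\[
[\, \Ker g \to Y \to T_i \,]
\]
with $T_i$ in degree $0$. Now I distinguish cases by the nature of the approximation $g \from Y \to T_i$. If $g$ is an epimorphism, then $\Ker g \hookrightarrow Y \twoheadrightarrow T_i$ is a short exact sequence in $\cH$, so the above complex is quasi-isomorphic to $0$ in degrees $0, -1$ and there is a nonzero object only in degree... — more precisely the complex $[\Ker g \to Y \to T_i]$ is exact except that we must be careful: the map $Y \to T_i$ being the approximation, $\rad P_i$ is its image, so $\Hom(T,Y) \to P_i$ need not be surjective onto $\rad P_i$ unless... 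Actually it is, by definition of the approximation generating the radical. Hence when $g$ is epi the two-term complex $[Y \to T_i]$ has homology only the kernel, placed in degree $1$ (cohomological degree $-1$ after the usual sign conventions, i.e. $\cH[1]$), matching the claim $T \otimes_\Lambda^{\L} S_i \in \cH[1]$. When $g$ is mono, the complex $[\Ker g \to Y \to T_i]$ reduces to $[Y \to T_i]$ with $Y \hookrightarrow T_i$, whose only homology is $\Coker g = T_i^*$ in degree $0$, so $T \otimes_\Lambda^{\L} S_i \in \cH$. By Lemma~\ref{lemma.HR} (applied to the indecomposable $T_i$ receiving the approximation from $\add\bar T$, using $\Ext^1_\cH(\bar T, T_i) = 0$), the components of $g$, hence $g$ itself, are mono or epi — but one also needs that $g$ is \emph{globally} mono or epi, which follows since its image is $\rad_\cH(T, T_i)$ and the quotient $T_i/\Image g$ corresponds to $S_i$, forcing $\Image g$ to be either all of $T_i$ (epi case) or a proper subobject with the cokernel having no maps from $\add \bar T$; in the latter situation minimality forces $g$ mono.

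I expect the main obstacle to be the bookkeeping that makes the previous sentence rigorous: one must verify that the minimal right $\add\bar T$-approximation is itself either a monomorphism or an epimorphism (not merely that its indecomposable components are), and identify precisely which projective resolution of $S_i$ one gets so that the termwise application of $T \otimes_\Lambda -$ is legitimate (i.e.\ that all terms of the resolution lie in $\add \Hom_\cH(T, \add T)$, which uses heredity of $\cH$ to guarantee the second syzygy is projective). Once the resolution is pinned down, the rest is a direct identification of homology. A clean way to organize this is: first show $\Image g = \rad_\cH(T, T_i)$ and that $g$ is mono or epi; then write down the resolution; then apply $T \otimes_\Lambda^{\L}-$ and compute homology in each of the two cases, obtaining $\Coker g \in \cH$ in the mono case and $\Ker g \in \cH[1]$ in the epi case. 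The converse directions are immediate since these two cases are exhaustive and the conclusions ($\cH$ versus $\cH[1]$) are mutually exclusive.
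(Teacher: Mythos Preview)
Your overall strategy---resolve $S_i$ projectively and apply $T\otimes_\Lambda^{\L}-$ termwise---is the same as the paper's, and the mono case goes through. The gap is in the epi case, where you identify the leftmost term of the resulting complex with $\Ker g$. That identification is false. The projective resolution of $S_i$ is
\[
0 \to \Hom_\cH(T,\Ker g) \to \Hom_\cH(T,Y) \to P_i \to S_i \to 0,
\]
and $\Hom_\cH(T,\Ker g)=\Omega^2 S_i$ is indeed projective (since $\gldim\Lambda\le 2$), say $\Hom_\cH(T,\Ker g)\cong\Hom_\cH(T,Z)$ for some $Z\in\add T$. Applying $T\otimes_\Lambda-$ termwise therefore yields $[\,Z\to Y\to T_i\,]$, not $[\,\Ker g\to Y\to T_i\,]$. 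In fact $\Ker g$ \emph{cannot} lie in $\add T$ when $g$ is epi: from the short exact sequence $\Ker g\into Y\onto T_i$ one computes $\Ext^1_\cH(T,\Ker g)\cong S_i\ne 0$, whereas every object of $\add T$ satisfies $\Ext^1_\cH(T,-)=0$. This is why your three-term complex appeared acyclic (it would force $T\otimes_\Lambda^{\L}S_i=0$) and why your retreat to the two-term complex $[Y\to T_i]$ is unjustified: one has $\Hom_\cH(T,\Ker g)\ne 0$ in general, so the resolution is genuinely of length two.

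The paper avoids this trap by never trying to name the leftmost term. It writes the resolution abstractly as $\oplus_k P_k\to\oplus_j P_j\to P_i$, transports it under the derived equivalence to triangles in $D^b(\cH)$, and then argues with cones: if $T\otimes_\Lambda^{\L}S_i\in\cH$, the nonzero map $T_i\to T\otimes_\Lambda^{\L}S_i$ in $\cH$ kills $f$, so $f$ is not epi and hence mono by Lemma~\ref{lemma.HR}; dually for $\cH[1]$. A clean way to repair your direct computation is to note that $T\otimes_\Lambda^{\L}S_i$ is indecomposable (it is the image of a simple under a derived equivalence) and hence, since $\cH$ is hereditary, is concentrated in a single degree. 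Thus for the complex $[Z\to Y\to T_i]$ one has $T\otimes_\Lambda^{\L}S_i\in\cH$ if and only if $H^0\ne 0$, i.e.\ $g$ is not epi, i.e.\ $g$ is mono; and $T\otimes_\Lambda^{\L}S_i\in\cH[1]$ if and only if $H^0=0$, i.e.\ $g$ is epi. Also note that Lemma~\ref{lemma.HR} applies directly to $g\colon Y\to T_i$ (with $T_i$ indecomposable and $\Ext^1_\cH(Y,T_i)=0$), so there is no need to reason about components.
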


\begin{proof}
Since $T$ is a tilting object, we know that $\mod \Lambda \subseteq \cH \vee \cH[1]$. Denoting by $P_t$ the indecomposable projective in $\mod \Lambda$ corresponding to an indecomposable summand $T_t$ of $T$, we get the projective resolution
\begin{align*}
0 \rightarrow  \oplus_{k} P_k \rightarrow \oplus_{j} P_j  \stackrel{\bar{f}}{\rightarrow}	P_i  \rightarrow	S_i \rightarrow 0
\end{align*}
 of $S_i$ (remember that $\pd S_i \leq 2$ since $\End_\cH(T)$ is quasitilted by \cite{HRS}). We denote the image of $\bar{f}$ by $\Omega S_i$. Then the above sequence can be decomposed into the two triangles
\[  \oplus_{k} P_k \rightarrow \oplus_{j} P_j  \stackrel{\bar{f}_1}{\rightarrow} \Omega S_i \rightarrow \oplus_k P_k[1] \text{ and } \Omega S_i \stackrel{\bar{f}_2}{\rightarrow} P_i \rightarrow S_i \rightarrow \Omega S_i[1] \]
in $D^b(\mod \Lambda)$, which correspond to the triangles
\begin{align*}
& \oplus_{k} T_k \rightarrow \oplus_{j} T_j \overset{f_1}{\rightarrow} T \otimes_{\Lambda}^{\L} \Omega S_i \rightarrow \oplus_k T_k[1] \qquad \text{and} \\
& T \otimes_{\Lambda}^{\L} \Omega S_i \extto{f_2} T_i \rightarrow T \otimes_{\Lambda}^{\L} S_i \to T \otimes_{\Lambda}^{\L} \Omega S_i[1]
\end{align*}
in $D^b(\cH)$. Here $f = f_2 \circ f_1$ is the minimal right $\add \bar{T}$-approximation of $T_i$. By Lemma~\ref{lemma.HR} we know that $f$ then must be either a monomorphism or an epimorphism.

If now $T \otimes_{\Lambda}^{\L} S_i$ is in $\cH$, we have the composition $[T_i \to T \otimes_{\Lambda}^{\L} S_i] \circ f = 0$ in $\cH$, and thus $f$ is not an epimorphism. Hence $f$ must be a monomorphism, and the lemma holds.

Next assume that $T \otimes_{\Lambda}^{\L} S_i$ is in $\cH[1]$. Similarly to the above paragraph we see that if $\oplus_k T_k \neq 0$ then $f$ is not a monomorphism, hence it is an epimorphism and the lemma holds. So assume instead that $\oplus_k T_k = 0$. Then $\oplus_j T_j \simeq T \otimes_{\Lambda}^{\L} \Omega S_i$, and thus $\Cone f = \Cone f_2 = T \otimes_{\Lambda}^{\L} S_i \in \cH[1]$ meaning that $f$ is an epimorphism.
\end{proof}

This lemma has a natural dual, which is obtained by identifying along the equivalence $\D\R\Hom(-, T) \colon D^b(\cH) \to D^b(\mod \Lambda)$. Observe that if we denote by $\nu$ the Serre functor of the derived categories, then $\D\R\Hom(-, T) = \R\Hom(\nu^{-1} T, -)$, and thus the inverse equivalence is given by $\nu^{-1}T \otimes_{\Lambda}^{\L} -$.

\begin{lemma}
Let $T = \bar{T} \oplus T_i$ such that $T_i$ is indecomposable, and let $S_i = \D(\Hom_{\cH}(T_i,T)/\rad(T_i,T))$ be the simple module corresponding to the injective module induced by $T_i$ in $\mod \Lambda$. Then the minimal left $\add \bar{T}$-approximation of $T_i$ is an epimorphism if and only if $\nu^- T \otimes_{\Lambda}^{\L} S_i \in \cH$, and a monomorphism if and only if $\nu^- T \otimes_{\Lambda}^{\L} S_i \in \cH[-1]$.
\end{lemma}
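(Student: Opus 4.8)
The plan is to read this statement off as the formal dual of Lemma~\ref{lemma.right_approx_mono_epi}, using the equivalence $\D\R\Hom(-,T)\colon D^b(\cH)\to D^b(\mod\Lambda)$, whose quasi-inverse is $\nu^-T\otimes_{\Lambda}^{\L}-$ as recorded just above, in place of the equivalence $\R\Hom(T,-)$ with quasi-inverse $T\otimes_{\Lambda}^{\L}-$. Equivalently, one runs the proof of Lemma~\ref{lemma.right_approx_mono_epi} in the opposite category $\cH^{\op}$, which is again a connected $\Hom$-finite hereditary $\bbK$-category with tilting object $T$ and endomorphism ring $\Lambda^{\op}$; translating back along $D^b(\cH^{\op})\simeq D^b(\cH)^{\op}$ exchanges left and right $\add\bar{T}$-approximations, exchanges monomorphisms with epimorphisms, exchanges the shifts $\cH[1]$ and $\cH[-1]$, and matches the simple top of the projective $\Lambda^{\op}$-module at $T_i$ with the simple $S_i=\D(\Hom_\cH(T_i,T)/\rad(T_i,T))$ of the present statement.

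Written out directly, the argument runs as follows. Since $\Lambda=\End_\cH(T)$ is quasitilted \cite{HRS}, we have $\operatorname{id}S_i\le 2$, so there is an exact sequence $0\to S_i\to I_i\to\bigoplus_j I_j\to\bigoplus_k I_k\to 0$, where $I_t$ is the indecomposable injective $\Lambda$-module attached to $T_t$ (recall that $\D\R\Hom(-,T)$ sends $T_t$ to $I_t$). Writing $\Omega^-S_i=\Coker(S_i\to I_i)$ and applying the quasi-inverse $\nu^-T\otimes_{\Lambda}^{\L}-$, we obtain triangles
\[\nu^-T\otimes_{\Lambda}^{\L}S_i\to T_i\extto{g_1}\nu^-T\otimes_{\Lambda}^{\L}\Omega^-S_i\to(\nu^-T\otimes_{\Lambda}^{\L}S_i)[1]\]
\[\nu^-T\otimes_{\Lambda}^{\L}\Omega^-S_i\extto{g_2}\bigoplus_j T_j\to\bigoplus_k T_k\to(\nu^-T\otimes_{\Lambda}^{\L}\Omega^-S_i)[1]\]
in $D^b(\cH)$, and $g:=g_2\circ g_1\colon T_i\to\bigoplus_j T_j$ is the minimal left $\add\bar{T}$-approximation of $T_i$; by Lemma~\ref{lemma.HR} it is a monomorphism or an epimorphism. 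Applying the inclusion $\mod\Lambda^{\op}\subseteq\cH^{\op}\vee\cH^{\op}[1]$ (valid since $T$ is a tilting object of $\cH^{\op}$) and translating back, we get $\nu^-T\otimes_{\Lambda}^{\L}(\mod\Lambda)\subseteq\cH\vee\cH[-1]$, so $\nu^-T\otimes_{\Lambda}^{\L}S_i$ lies in $\cH$ or in $\cH[-1]$. If it lies in $\cH$, then the first map $\iota\colon\nu^-T\otimes_{\Lambda}^{\L}S_i\to T_i$ of the first triangle is a nonzero morphism in $\cH$ (were it zero, $g_1$ would be split mono, so the indecomposable $T_i$ together with the nonzero summand $(\nu^-T\otimes_{\Lambda}^{\L}S_i)[1]\in\cH[1]$ would decompose $\nu^-T\otimes_{\Lambda}^{\L}\Omega^-S_i$, contradicting $\nu^-T\otimes_{\Lambda}^{\L}\Omega^-S_i\in\cH\vee\cH[-1]$), and $g\circ\iota=g_2\circ(g_1\circ\iota)=0$; hence $g$ is not a monomorphism, so it is an epimorphism. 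If $\nu^-T\otimes_{\Lambda}^{\L}S_i$ lies in $\cH[-1]$, one argues as in the proof of Lemma~\ref{lemma.right_approx_mono_epi} that $g$ is not an epimorphism when $\bigoplus_k T_k\ne 0$, whereas if $\bigoplus_k T_k=0$ then $g_2$ is an isomorphism, so $\Cone(g)\cong\Cone(g_1)=(\nu^-T\otimes_{\Lambda}^{\L}S_i)[1]\in\cH$ and $g$ is a monomorphism; either way $g$ is a monomorphism. Since the two cases for $\nu^-T\otimes_{\Lambda}^{\L}S_i$ and the two cases for $g$ are each mutually exclusive, both equivalences of the lemma follow.

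The main obstacle is purely one of bookkeeping. One must keep straight that passing to the dual (contravariant) identification reverses the direction of approximations and of monomorphisms versus epimorphisms and replaces $\cH[1]$ by $\cH[-1]$, so that it is genuinely the condition $\nu^-T\otimes_{\Lambda}^{\L}S_i\in\cH$ that pairs with $g$ being an epimorphism and $\nu^-T\otimes_{\Lambda}^{\L}S_i\in\cH[-1]$ that pairs with $g$ being a monomorphism. One must also dispose of the degenerate subcase $\bigoplus_k T_k=0$ by hand, exactly as in the proof of Lemma~\ref{lemma.right_approx_mono_epi}, since there the factorization argument is unavailable; everything else is formal.
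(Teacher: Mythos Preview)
Your proposal is correct and follows essentially the same approach as the paper: the paper simply states that this lemma is the natural dual of Lemma~\ref{lemma.right_approx_mono_epi}, obtained by replacing the equivalence $\R\Hom(T,-)$ (with inverse $T\otimes_{\Lambda}^{\L}-$) by the equivalence $\D\R\Hom(-,T)$ (with inverse $\nu^{-}T\otimes_{\Lambda}^{\L}-$), and gives no further details. Your first paragraph is exactly this, and your second paragraph spells out the dualized argument explicitly, which the paper omits; the bookkeeping you flag in the third paragraph is precisely what the reader would have to supply.
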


Note that the derived equivalences between $\cH$ and $\mod \Lambda$ commute with the Serre functor $\nu$, and hence $(\nu T) \otimes_{\Lambda}^{\L} S_i = \nu(T \otimes_{\Lambda}^{\L} S_i)$. Thus we can apply $\nu$ in the above lemma, and obtain the following.

\begin{lemma} \label{lemma.left_approx_mono_epi}
Let $T = \bar{T} \oplus T_i$ such that $T_i$ is indecomposable, and
let $S_i = \Hom_{\cH}(T,T_i)/\rad(T,T_i)$ be the simple module
corresponding to the projective module induced by $T_i$ in $\mod
\Lambda$. Then the minimal left $\add \bar{T}$-approximation of $T_i$
is an epimorphism if and only if $T \otimes_{\Lambda}^{\L} S_i \in
\nu\cH$, and a monomorphism if and only if $T \otimes_{\Lambda}^{\L} S_i \in \nu\cH[-1]$.
\end{lemma}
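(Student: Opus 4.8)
The plan is to obtain this lemma purely formally from the previous one (the dual of Lemma~\ref{lemma.right_approx_mono_epi}, where one identifies $D^b(\cH)$ with $D^b(\mod \Lambda)$ along $\D\R\Hom(-,T) = \R\Hom(\nu^{-1}T,-)$) by transporting its conclusion through the Serre functor $\nu$ of $D^b(\cH)$, exactly along the lines indicated before the statement.

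First I would check that the simple module $S_i$ occurring here is the same one as in the previous lemma. Writing $\Lambda = \End_\cH(T)$, the module $\Hom_\cH(T,T_i)/\rad(T,T_i)$ is the top of the indecomposable projective $\Hom_\cH(T,T_i)$, while $\D(\Hom_\cH(T_i,T)/\rad(T_i,T))$ is the socle of the indecomposable injective $\D\Hom_\cH(T_i,T)$; over the finite-dimensional algebra $\Lambda$ both are the simple module supported at the vertex attached to $T_i$, so they coincide and the same $S_i$ may legitimately be fed into both lemmas.

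Next I would use the identity $(\nu T) \otimes_{\Lambda}^{\L} S_i \simeq \nu(T \otimes_{\Lambda}^{\L} S_i)$ recorded just before the statement --- it holds because $\nu$ is an autoequivalence of $D^b(\cH)$ --- and apply it with $T$ replaced by $\nu^{-1}T$, obtaining $T \otimes_{\Lambda}^{\L} S_i \simeq \nu(\nu^{-1}T \otimes_{\Lambda}^{\L} S_i)$. By the previous lemma, the minimal left $\add \bar{T}$-approximation of $T_i$ is an epimorphism if and only if $\nu^{-1}T \otimes_{\Lambda}^{\L} S_i \in \cH$; since $\nu$ is an autoequivalence this is equivalent to $\nu(\nu^{-1}T \otimes_{\Lambda}^{\L} S_i) \in \nu\cH$, i.e.\ to $T \otimes_{\Lambda}^{\L} S_i \in \nu\cH$ by the displayed isomorphism. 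The monomorphism case is identical, replacing $\cH$ by $\cH[-1]$ and using that $\nu$ commutes with the shift, so that $\nu(\cH[-1]) = (\nu\cH)[-1] = \nu\cH[-1]$.

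I expect no genuine obstacle here; the one thing to be careful with is the bookkeeping of which tensor functor is inverse to which of the two standard equivalences ($T \otimes_{\Lambda}^{\L} -$ is inverse to $\R\Hom(T,-)$, whereas $\nu^{-1}T \otimes_{\Lambda}^{\L} -$ is inverse to $\D\R\Hom(-,T)$), together with the identification of $S_i$ noted above. Once those are settled, the statement drops out.
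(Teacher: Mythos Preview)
Your proposal is correct and follows essentially the same approach as the paper: apply $\nu$ to the conclusion of the preceding lemma using the commutation $(\nu T)\otimes_\Lambda^{\L} S_i \simeq \nu(T\otimes_\Lambda^{\L} S_i)$. You are in fact slightly more explicit than the paper, spelling out why the two descriptions of $S_i$ agree and which tensor functor inverts which equivalence, but the underlying argument is identical.
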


Putting together Lemmas~\ref{lemma.right_approx_mono_epi} and \ref{lemma.left_approx_mono_epi} we get the following proposition:

\begin{proposition}\label{sink.source}
Let $T=T_i \oplus \bar{T}$ be a tilting object in $\cH$, where $T_i$ is indecomposable. We set $\Lambda = \End_{\cH}(T)$, and denote by $S_i$ the simple $\Lambda$-module corresponding to $T_i$. Then we have exactly one of the following (where $\inj(\cH)$ denotes the injective objects in $\cH$):
\begin{enumerate}
\item $T \otimes_{\Lambda}^{\L} S_i \in \cH \backslash \inj(\cH)$. In this case both the left and right minimal $\add \bar T$-approximations are monomorphisms, that is, $T_i$ is a source and not a sink.
\item $T \otimes_{\Lambda}^{\L} S_i \in (\cH \backslash \inj(\cH))[1]$. In this case both the left and the right minimal $\add \bar T$-approximations are epimorphisms, that is, $T_i$ is a sink and not a source.
\item $T \otimes_{\Lambda}^{\L} S_i \in \inj(\cH)$. In this case the minimal right $\add \bar T$-approximation is a monomorphism and the minimal left $\add \bar T$-approximation is an epimorphism, that is, $T_i$ is neither a sink nor a source.
\end{enumerate}
\end{proposition}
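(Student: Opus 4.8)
The plan is to deduce the trichotomy from Lemmas~\ref{lemma.right_approx_mono_epi} and~\ref{lemma.left_approx_mono_epi} by locating the object $T \otimes_{\Lambda}^{\L} S_i$ of $D^b(\cH)$ relative to the heart $\cH$, its shift $\cH[1]$, and the image $\nu\cH$ of $\cH$ under the Serre functor $\nu$. Since $T \otimes_{\Lambda}^{\L} -$ is an equivalence and $S_i$ is simple, $T \otimes_{\Lambda}^{\L} S_i$ is indecomposable in $D^b(\cH)$, hence equal to $E[n]$ for some indecomposable $E \in \cH$ and some $n \in \bbZ$; and since $T$ is a tilting object we have $\mod \Lambda \subseteq \cH \vee \cH[1]$, so $n \in \{0,1\}$, i.e.\ $T \otimes_{\Lambda}^{\L} S_i \in \cH \cup \cH[1]$.

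First I would record the shape of $\nu\cH$. Writing $\nu = \tau[1]$ with $\tau$ the Auslander--Reiten translation, the Nakayama bijection carries the indecomposable projectives to the indecomposable injectives (in degree $0$), while $\tau$ restricts to a bijection from the non-projective to the non-injective indecomposables of $\cH$; hence the indecomposable objects of $\nu\cH$ are exactly the injective objects of $\cH$ together with the shifts $E'[1]$ of the non-injective indecomposables $E' \in \cH$. (For $\cH = \cohX$ there are no projectives or injectives, so this reads $\nu\cH = \cH[1]$.) Consequently, as full subcategories of $D^b(\cH)$,
\begin{align*}
\cH \cap \nu\cH &= \inj(\cH), & \cH[1] \cap \nu\cH &= (\cH \backslash \inj(\cH))[1],\\
\cH \cap \nu\cH[-1] &= \cH \backslash \inj(\cH), & \cH[1] \cap \nu\cH[-1] &= 0.
\end{align*}

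Now for the main step. By Lemma~\ref{lemma.HR} the minimal right and the minimal left $\add \bar{T}$-approximation of $T_i$ are each a monomorphism or an epimorphism (this is part of the proofs of Lemmas~\ref{lemma.right_approx_mono_epi} and~\ref{lemma.left_approx_mono_epi}), and since $T$ is basic neither of them is an isomorphism, so each is mono or epi but not both. By Lemma~\ref{lemma.right_approx_mono_epi} the right approximation is a monomorphism exactly when $T \otimes_{\Lambda}^{\L} S_i \in \cH$ and an epimorphism exactly when $T \otimes_{\Lambda}^{\L} S_i \in \cH[1]$; by Lemma~\ref{lemma.left_approx_mono_epi} the left approximation is an epimorphism exactly when $T \otimes_{\Lambda}^{\L} S_i \in \nu\cH$ and a monomorphism exactly when $T \otimes_{\Lambda}^{\L} S_i \in \nu\cH[-1]$. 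Combining, $T \otimes_{\Lambda}^{\L} S_i$ lies in $(\cH \cup \cH[1]) \cap (\nu\cH \cup \nu\cH[-1])$, which by the displayed identities is the \emph{disjoint} union $(\cH \backslash \inj(\cH)) \sqcup (\cH \backslash \inj(\cH))[1] \sqcup \inj(\cH)$. I would then split into these three cases and, using the characterization of the source/sink property of $T_i$ via its one-sided $\add \bar{T}$-approximations (the Proposition above), read off: if $T \otimes_{\Lambda}^{\L} S_i \in \cH \backslash \inj(\cH)$ both approximations are monomorphisms, so $T_i$ is a source and not a sink --- case (a); if $T \otimes_{\Lambda}^{\L} S_i \in (\cH \backslash \inj(\cH))[1]$ both are epimorphisms, so $T_i$ is a sink and not a source --- case (b); if $T \otimes_{\Lambda}^{\L} S_i \in \inj(\cH)$ the right approximation is a monomorphism and the left an epimorphism, so $T_i$ is neither --- case (c). Disjointness of the three regions gives that exactly one of (a), (b), (c) holds.

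The part that needs care is the description of $\nu\cH$, i.e.\ understanding precisely how the Serre functor moves $\cH$ and $\cH[1]$ past one another, and in particular keeping the two cases aligned: in the module case $\nu$ interchanges projectives and injectives in degree $0$, whereas in the sheaf case there are no projectives or injectives, $\nu\cH = \cH[1]$, and case (c) is vacuous, consistent with $\inj(\cohX) = \emptyset$. Once that description is in hand, everything else is bookkeeping with the two preceding lemmas and the source/sink characterization.
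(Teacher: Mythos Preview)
Your proposal is correct and follows the same approach as the paper, which simply states that the proposition is obtained by ``putting together Lemmas~\ref{lemma.right_approx_mono_epi} and~\ref{lemma.left_approx_mono_epi}''. You have supplied the details the paper omits: the explicit description of the indecomposables of $\nu\cH$ and the resulting intersections $\cH \cap \nu\cH = \inj(\cH)$, $\cH \cap \nu\cH[-1] = \cH \setminus \inj(\cH)$, $\cH[1] \cap \nu\cH = (\cH \setminus \inj(\cH))[1]$, $\cH[1] \cap \nu\cH[-1] = 0$, which is exactly what is needed to turn the two lemmas into the trichotomy.
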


As an easy consequence we obtain the following result of Happel and Unger (\cite[Proposition~3.6]{HU}).

\begin{corollary} \label{corollary.always_2_compl}
Let $\cH$ be a hereditary category with a tilting object and with no nonzero projectives. Then any indecomposable summand of a tilting object in $\cH$ is either a sink or a source. In particular every almost complete tilting object has two complements.
\end{corollary}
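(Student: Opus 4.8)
The plan is to derive Corollary~\ref{corollary.always_2_compl} directly from Proposition~\ref{sink.source}. The key observation is that if $\cH$ has no nonzero projectives, then by the standard structure theory (the derived-equivalence classification recalled in Section~\ref{section.background}), $\cH$ also has no nonzero injectives: indeed, if $\cH$ is derived equivalent to $\mod H$ for a hereditary algebra $H$, then having no projectives forces $H$ to be of infinite representation type, and in the cluster-category setup one works with a model of $\cH$ (a ``twist'' of $\mod H$ inside $D^b(H)$) in which projectives and injectives simultaneously disappear; and for $\cH = \cohX$ there are no nonzero projectives and no nonzero injectives at all. So first I would record that $\inj(\cH) = 0$ under the hypothesis.

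Given that, the argument is immediate: let $T = T_i \oplus \bar T$ be a tilting object with $T_i$ indecomposable, and let $S_i$ be the corresponding simple $\End_\cH(T)$-module. By Proposition~\ref{sink.source}, exactly one of the three alternatives (a), (b), (c) holds. But alternative (c) requires $T \otimes_\Lambda^{\L} S_i \in \inj(\cH)$, and since $\inj(\cH) = 0$ while $T \otimes_\Lambda^{\L} S_i \neq 0$ (it is the image of a nonzero object under an equivalence of derived categories), case (c) cannot occur. Hence we are in case (a) or case (b), i.e.\ $T_i$ is a source or a sink. This proves the first assertion.

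For the ``in particular'' statement: let $\bar T$ be an almost complete tilting object, so there is at least one complement $T_i$ with $T_i \oplus \bar T$ a tilting object, and $T_i$ is indecomposable. By the first part $T_i$ is a sink or a source. If $T_i$ is a source, then by Proposition (the mutation proposition preceding Lemma~\ref{lemma.right_approx_mono_epi}) the cokernel $T_i^* = \Coker(f)$ of the minimal left $\add \bar T$-approximation $f \colon T_i \to X$ gives a second complement $T_i^* \oplus \bar T$, with $T_i^*$ a sink (in particular $T_i^* \not\cong T_i$, since one is a source and the other a sink, and these are mutually exclusive by Proposition~\ref{sink.source}); dually if $T_i$ is a sink we use the kernel of the minimal right approximation. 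In either case $\bar T$ has (at least, and in fact exactly) two complements.

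I do not expect a serious obstacle here; the only point requiring a little care is the assertion that ``no nonzero projectives'' implies ``no nonzero injectives'' for the class of hereditary categories under consideration — this is where one must invoke the classification and the choice of hereditary model made in the paper rather than argue abstractly, since a general $\Hom$-finite hereditary category can certainly have injectives but no projectives. Everything else is a direct invocation of Proposition~\ref{sink.source} and the preceding mutation proposition.
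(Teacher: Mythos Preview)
Your argument is correct and is exactly what the paper intends: it presents the corollary as an ``easy consequence'' of Proposition~\ref{sink.source} without further proof, and your derivation---rule out case~(c) because $\inj(\cH)=0$, then invoke the preceding mutation proposition for the second claim---is precisely that consequence.

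The only point to sharpen is your justification that no nonzero projectives implies no nonzero injectives: rather than appealing to a particular model $\cH_*$ (which conflates the general statement of the corollary with the specific construction of Section~\ref{section.gradmut}), note that by \cite[Theorem~4.2]{Happel1}, recalled in Section~\ref{section.background}, any hereditary $\cH$ with a tilting object and a nonzero projective is already equivalent to some $\mod H$; applying the same theorem to $\cH^{\op}$ shows that a nonzero injective in $\cH$ likewise forces $\cH\simeq\mod H'$, hence forces nonzero projectives. So the implication holds for \emph{every} such $\cH$, not only for the $\cH_*$ introduced later.
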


If $\cH$ has nonzero injectives (that is, if $\cH$ is $\mod H$ for some hereditary algebra $H$), there exist tilting objects with indecomposable summands which are neither sinks nor sources. It is not possible to mutate in such summands, since the corresponding almost complete tilting objects have only one complement. In the following, we present a way to circumnavigate this problem.

Let $H$ be a representation infinite connected hereditary algebra.

\begin{definition}
Let $T \in \mod H$ be a tilting object such that no summands of $T$ are in the preinjective component of the AR-quiver of $\mod H$. Then we define an indecomposable summand $T_i$ of $T$ to be a \emph{source$^{-}$} if $\tau^{-n} T_i$ is a source in $\tau^{-n}T$ for almost all $n>0$, and we define $T_i$ to be a \emph{sink$^{-}$} if $\tau^{-n}T_i$ is a sink in $\tau^{-n}T$ for almost all $n>0$.
\end{definition}

\begin{remark}
If $T$ has preinjective direct summands we can always replace it by a different tilting module without preinjective direct summands which gives rise to the same cluster tilted algebra. Indeed, suppose $T = T_I \oplus \bar{T}$, with $T_I$ preinjective and $\bar{T}$ without preinjective direct summands. Clearly there is $\ell$ such that $\tau^{- \ell} T_I = 0$, or, equivalently, such that $\tau_{D^b(H)}^{- \ell} T_I \in (\mod H)[1]$. Now
\[ \tau_{D^b(H)}^{-\ell-1} \bar{T} \oplus F^{-1} \tau_{D^b(H)}^{-\ell-1} T_I = \tau_{D^b(H)}^{-\ell-1} \bar{T} \oplus \tau_{D^b(H)}^{-\ell} T_I[-1] \in \mod H \]
is a tilting module without preinjective direct summands giving rise to the same cluster tilted algebra.
\end{remark}

\begin{remarks} ~

\begin{enumerate}
\item If $\tau^{-k}T_i$ is a sink in $\tau^{-k}T$ for some $k\geq 0$, then $\tau^{-k'}T_i$ will be a sink in $\tau^{-k'}T$ for any $k'>k$ (see Proposition~\ref{sink.source}). 
\item There is a connection between being a source or a sink in the quiver of $\End_{\cH}(T)$ and being a source or a sink in $T$ as defined above: a source (resp.\ sink) in $\End_{\cH}(T)$ can never be a sink (resp.\ source) in $T$ (though it can be neither), and thus will be a $\nsource$ (resp.\ $\nsink$) in $T$.
\end{enumerate}
\end{remarks}

We can make a hereditary category $\cH$ without nonzero projectives from $\mod H$ by letting $\cH = \cI[-1] \vee \cP \vee \cR$ where $\cI$ is the preinjective component, $\cP$ is the preprojective component and $\cR$ is the union of the regular components of the AR-quiver of $\mod H$. Then we have the following lemma:

\begin{lemma}
Let $H$ be a representation infinite indecomposable hereditary algebra, and let $T \in \mod H$ be a tilting object such that no summands of $T$ are preinjective. If $T_i$ is an indecomposable summand of $T$ we have the following:
\begin{enumerate}
\item $T_i$ is a source$^{-}$ if and only if $T_i$ is a source in $\cH$ and if and only if $T \otimes_{\Lambda}^{\L} S_i \in \cP \vee \cR$.
\item $T_i$ is a sink$^{-}$ if and only if $T_i$ is a sink in $\cH$ and if and only if $T \otimes_{\Lambda}^{\L} S_i \in \cI \vee \cP[1] \vee \cR[1]$.
\end{enumerate}
\end{lemma}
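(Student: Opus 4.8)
The plan is to transport everything to the common bounded derived category, which I will write as $\cD = D^b(\cH)$ and identify with $D^b(\mod H)$, and then to compare two applications of Proposition~\ref{sink.source}. Set $\Lambda = \End_{\cH}(T)$. Since $T$ has no preinjective summands it lies in $\cP \vee \cR$, a full subcategory of both $\cH$ and $\mod H$ on which $\Hom$ coincides with $\Hom_{\cD}$; hence $T$ is a tilting object in $\cH$ and a tilting module in $\mod H$, $\End_{\mod H}(T) = \Lambda$ as well, and $\R\Hom_{\cD}(T,-)$ induces one and the same derived equivalence between $\cD$ and $D^b(\mod \Lambda)$ whichever of the two hearts we start from. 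In particular the simple $\Lambda$-module $S_i$ at the vertex of $T_i$ and the object $Z := T \otimes_{\Lambda}^{\L} S_i \in \cD$ are unambiguous.

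First I would locate $Z$. Proposition~\ref{sink.source} applied to the $\cH$-tilting object $T$ gives $Z \in \cH \cup \cH[1] = (\cI[-1] \vee \cP \vee \cR) \cup (\cI \vee \cP[1] \vee \cR[1])$, while applied to the $\mod H$-tilting module $T$ it gives $Z \in \mod H \cup (\mod H)[1] = (\cP \vee \cR \vee \cI) \cup (\cP[1] \vee \cR[1] \vee \cI[1])$. The listed Auslander--Reiten components and their shifts are pairwise disjoint in $\cD$, so intersecting the two conditions forces
\[ Z \in \cP \vee \cR \vee \cI \vee \cP[1] \vee \cR[1]; \]
in particular $Z$ avoids $\cI[-1]$ and $\cI[1]$. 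Moreover $\cH$ has no nonzero projectives, so by Corollary~\ref{corollary.always_2_compl} the summand $T_i$ is a sink or a source in $\cH$, i.e. case (c) of Proposition~\ref{sink.source} is excluded; hence $T_i$ is a source in $\cH$ iff $Z \in \cH$, which by the display means $Z \in \cP \vee \cR$, and $T_i$ is a sink in $\cH$ iff $Z \in \cH[1]$, i.e. $Z \in \cI \vee \cP[1] \vee \cR[1]$. This yields the middle ``if and only if'' in both (a) and (b).

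For the remaining equivalence I would feed $Z$ into the source$^{-}$/sink$^{-}$ definition. For each $n \geq 0$ the autoequivalence $\tau^{-n}$ of $\cD$ maps $\cP \vee \cR$ into $\cP \vee \cR \subseteq \mod H$, so $\tau^{-n}T$ is again a tilting module with endomorphism ring $\Lambda$, the summand $\tau^{-n}T_i$ corresponds to the same vertex $i$, and, using $\R\Hom_{\cD}(\tau^{-n}T,-) \cong \R\Hom_{\cD}(T, \tau^{n}(-))$, the associated equivalence satisfies $(\tau^{-n}T) \otimes_{\Lambda}^{\L} S_i \cong \tau^{-n} Z$. So Proposition~\ref{sink.source} over $\mod H$ tells us that $\tau^{-n}T_i$ is a source in $\tau^{-n}T$ iff $\tau^{-n}Z \in \mod H \setminus \inj(\mod H)$, and a sink iff $\tau^{-n}Z \in (\mod H \setminus \inj(\mod H))[1]$. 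Now I run through the possibilities for $Z$ left by the previous step, using that over a connected representation-infinite $H$ no preprojective or regular module is injective and none lies in the preinjective component: if $Z \in \cP$ or $Z \in \cR$ then $\tau^{-n}Z$ stays in $\cP \vee \cR \subseteq \mod H \setminus \inj(\mod H)$ for all $n$, so $\tau^{-n}T_i$ is a source for every $n > 0$ and $T_i$ is a source$^{-}$; if $Z \in \cP[1]$ or $Z \in \cR[1]$ then $\tau^{-n}Z$ stays in $\cP[1] \vee \cR[1] \subseteq (\mod H \setminus \inj(\mod H))[1]$, so $\tau^{-n}T_i$ is a sink for every $n > 0$ and $T_i$ is a sink$^{-}$; and if $Z \in \cI$ then, $Z$ being preinjective, there is $\ell$ with $\tau^{-n}Z \in \cP[1] \subseteq (\mod H \setminus \inj(\mod H))[1]$ for all $n > \ell$, so $\tau^{-n}T_i$ is a sink for almost all $n > 0$ and $T_i$ is a sink$^{-}$. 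Hence $T_i$ is a source$^{-}$ exactly when $Z \in \cP \vee \cR$ and a sink$^{-}$ exactly when $Z \in \cI \vee \cP[1] \vee \cR[1]$; together with the previous paragraph this proves (a) and (b) (and incidentally shows $T_i$ is exactly one of source$^{-}$, sink$^{-}$).

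The only genuine subtlety is the bookkeeping: verifying that $\Lambda$, $S_i$, $Z$ and the relevant equivalences really are ``the same'' whether read inside $\cH$, inside $\mod H$, or inside $\cD$, and that $\tau^{-n}$ commutes through the tilting equivalence. The conceptual content is minimal: the double use of Proposition~\ref{sink.source} confines $Z$ away from the extra component $\cI[-1]$, and the fact that preprojective and regular modules never become injective under $\tau^{-n}$ handles the passage to ``almost all $n$''.
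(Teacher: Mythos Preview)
Your proof is correct and follows essentially the same route as the paper: both arguments hinge on the identity $(\tau^{-n}T)\otimes_{\Lambda}^{\L} S_i \cong \tau^{-n}(T\otimes_{\Lambda}^{\L} S_i)$ together with Proposition~\ref{sink.source}, and then track in which component $\tau^{-n}Z$ lands for $n\gg 0$. The one place where you do more than the paper is the middle equivalence ``$T_i$ is a source in $\cH$ iff $Z\in\cP\vee\cR$'': the paper's proof addresses only the outer equivalence and leaves this step implicit, whereas your intersection argument (applying Proposition~\ref{sink.source} once for the heart $\cH$ and once for $\mod H$, then intersecting to exclude $\cI[-1]$ and $\cI[1]$) makes it explicit and is a clean way to handle it.
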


\begin{proof}
By definition of source$^{-}$ we know that $T_i$ is a source$^{-}$ in $T$ if and only if $\tau^{-n} T_i$ is a source in $\tau^{-n}T$ for $n \gg 0$. This is the case if and only if $(\tau^{-n}T) \otimes_{\Lambda}^{\L} S_i$ is in $\mod H$ for $n \gg 0$ by Proposition~\ref{sink.source}, and since we have that $(\tau^{-n}T) \otimes_{\Lambda}^{\L} S_i = \tau^{-n}(T \otimes_{\Lambda}^{\L} S_i)$ we see that this is the case if and only if $T \otimes_{\Lambda}^{\L} S_i$ is in $\cP \vee \cR$. This shows (i). The argument for (ii) is similar.
\end{proof}

In particular we see that for a tilting module in $\mod H$ such that $T \in \cP \vee \cR$, any indecomposable summand $T_i$ must either be a \nsource or a $\nsink.$ In this sense we can always do tilting mutation by replacing T with $\tau^{-n}T$ for a sufficiently large positive integer $n$.

\begin{example}
Let $H = \bbK[1 \, \tikz[baseline]{ \draw [<-] (0,.15) -- (.5,.15); \draw [<-] (0,.05) -- (.5,.05);} \, 2 \, \tikz[baseline]{ \draw [<-] (0,.1) -- (.5,.1);} \, 3]$. The beginning of the preprojective component looks as follows (numbers are dimension vectors):
\[ \begin{tikzpicture}
 \node (P11) at (0,0) {1-0-0};
 \node (P21) at (1,1) {2-1-0};
 \node (P31) at (2,2) {2-1-1};
 \node (P12) at (2,0) {3-2-0};
 \node (P22) at (3,1) {6-4-1};
 \node (P32) at (4,2) {4-3-0};
 \node (P13) at (4,0) {9-6-2};
 \node (P23) at (5,1) {16-11-3};
 \node (P33) at (6,2) {12-8-3};
 \node (P14) at (6,0) {23-16-4};
 \node (P24) at (7,1) {42-29-8};
 \node (P34) at (8,2) {30-21-5};
 \node (P15) at (8,0) {$\cdots$};
 \node (P25) at (9,1) {$\cdots$};
 \node (P35) at (10,2) {$\cdots$};
 \draw [->] (P11.75) to (P21.210);
 \draw [->] (P11.45) to (P21.225);
 \draw [->] (P21) -- (P31);
 \draw [->] (P21.315) to (P12.135);
 \draw [->] (P21.330) to (P12.105);
 \draw [->] (P31) -- (P22);
 \draw [->] (P12.75) to (P22.210);
 \draw [->] (P12.45) to (P22.225);
 \draw [->] (P22) -- (P32);
 \draw [->] (P22.315) to (P13.135);
 \draw [->] (P22.330) to (P13.105);
 \draw [->] (P32) -- (P23);
 \draw [->] (P13.75) to (P23.210);
 \draw [->] (P13.45) to (P23.225);
 \draw [->] (P23) -- (P33);
 \draw [->] (P23.315) to (P14.135);
 \draw [->] (P23.330) to (P14.105);
 \draw [->] (P33) -- (P24);
 \draw [->] (P14.75) to (P24.210);
 \draw [->] (P14.45) to (P24.225);
 \draw [->] (P24) -- (P34);
 \draw [->] (P24.315) to (P15.135);
 \draw [->] (P24.330) to (P15.105);
 \draw [->] (P34) -- (P25);
\end{tikzpicture} \]
Using the dimension vectors it is easily checked which approximations are monomorphisms and epimorphisms, respectively.

For $T = H = P_1 \oplus P_2 \oplus P_3$ we see that $P_1$ and $P_2$ are sources, while $P_3$ is not exchangeable.

For $\tau^- T = \tau^- P_1 \oplus \tau^- P_2 \oplus \tau^- P_3$ one checks that $\tau^- P_1$ is a source, $\tau^- P_2$ is not exchangeable, and $\tau^- P_3$ is a sink.

For $\tau^{-2} T = \tau^{-2} P_1 \oplus \tau^{-2} P_2 \oplus \tau^{-2} P_3$ we have that $\tau^{-2} P_1$ is a source, while $\tau^{-2} P_2$ and $\tau^{-2} P_3$ are sinks.

Thus in $T$ we have that $P_1$ is a source and a $\nsource$, $P_2$ is a source and a $\nsink$, and $P_3$ is not exchangeable and a $\nsink$.
\end{example}

The distribution of sinks and sources (or $\nsink$s and $\nsource$s if
$\cH$ has nonzero injectives) will play a key role in defining a graded mutation rule for cluster-tilting objects in the next section.

 We have seen that one way to determine the sink-source distribution of a tilting object $T \in \cH$ is by calculating the position in $\cD:=D^b(\cH)$ of $S_1,\ldots,S_n$ the simple $\Lambda$-modules, where $\Lambda=\End_\cH(T)$. Another way is by using a result from \cite{Hubner2} that exploits free left and right mutations of exceptional sequences in $\cD$, which we now define.

\begin{definition}[{\cite{Hubner2}}] \label{def.free.mutation} For an exceptional pair $(\cE,\cF)$ in $\cD$ the {\em free} left and right mutations are defined as
\[
  \tcL_{\cE} \cF := \cL_{\cE}\cF[1] \text{ and }   \tcR_{\cF} \cE := \cR_{\cF}\cE[-1].
\]
 Similarly, for an exceptional sequence $\varepsilon=(\cE_1,\ldots,\cE_n)$ one defines $\lambdat_i$  (resp.\ $\rhot_i$) acting on exceptional sequences by composing with the functor $[1]$ (resp.\ $[-1]$) in the corresponding position.
\end{definition}

The following proposition explains the behavior of the sink-source distribution after (tilting) mutation by using the simple modules of quasitilted algebras.

\begin{proposition}[{\cite[Proposition~3.4]{Hubner2}}] \label{prop.behavior.sink-source} Let $T = \oplus_{i=1}^{n}T_{i}$ be a tilting object in a hereditary category $\cH$ without nonzero projectives. Denote by $\Lambda $ the quasitilted algebra $\End_{\cH}(T)$ and let $S_{1},\ldots,S_{n}$ denote the simple $\Lambda$-modules. Fix an indecomposable summand $T_{j}$ of $T$ for some $1\le j \le n$. Let $T^{(j)}$ be the tilting object obtained by replacing the indecomposable summand $T_{j}$ with $T_{j}^*$. Denote by $\Lambda^{(j)}=\End_{\cH}(T^{(j)})$ and $S_{1}^{(j)},\ldots,S_{n}^{(j)}$ the simple $\Lambda^{(j)}$-modules. Then we have the following.

\begin{enumerate} 
\item If $T_{j}$ is a sink of $T$, we have the following.
 \begin{enumerate}
  \item The simple $S_{j}^{(j)}$ is isomorphic to $S_{j}[-1]$.
  \item For every $k$ such that there is an irreducible morphism $T_{k}\to T_{j}$ in $\add_{\cH}(T)$ we have that
    \[   S_{k}^{(j)} \simeq \tcL_{S_{j}} S_{k}, \]
 that is, the simple $S_{k}^{(j)} $ is obtained by free left mutation of $S_{k}$ over $S_{j}$.
   \item All remaining simple $\Lambda$ and $\Lambda^{(j)}$-modules coincide.
 \end{enumerate}

\item If $T_{j}$ is a source of $T$, we have the following.
 \begin{enumerate}
  \item The simple $S_{j}^{(j)}$ is isomorphic to $S_{j}[1]$.
  \item For every $k$ such that there is an irreducible morphism $T_{j}\to T_{k}$ in $\add_{\cH}(T)$ we have that
    \[   S_{k}^{(j)} \simeq \tcR_{S_{j}} S_{k}, \]
 that is, the simple $S_{k}^{(j)} $ is obtained by free right mutation of $S_{k}$ over $S_{j}$.
   \item All remaining simple $\Lambda$ and $\Lambda^{(j)}$-modules coincide.
 \end{enumerate}
  
\end{enumerate}
\end{proposition}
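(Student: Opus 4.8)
The plan is to transport the statement through the standard derived equivalences. Write $\Phi=\R\Hom(T,-)$ and $\Phi^{(j)}=\R\Hom(T^{(j)},-)$ for the triangle equivalences from $\cD=D^b(\cH)$ to $D^b(\mod\Lambda)$ and $D^b(\mod\Lambda^{(j)})$, and put $\sigma_i:=\Phi^{-1}(S_i)$ and $\sigma_i^{(j)}:=(\Phi^{(j)})^{-1}(S_i^{(j)})$, so that the three assertions in (a) become, inside $\cD$: $\sigma_j^{(j)}\cong\sigma_j[-1]$; $\sigma_k^{(j)}\cong\tcL_{\sigma_j}\sigma_k$ for each $k$ with an irreducible morphism $T_k\to T_j$ in $\add_\cH T$; and $\sigma_i^{(j)}\cong\sigma_i$ otherwise. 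I would get (b) from (a) by duality, applying (a) to $\cH^{\op}$ — which is again hereditary with a tilting object and, since $\cH$ has no nonzero injectives (Corollary~\ref{corollary.always_2_compl} and the remark after it), has no nonzero projectives — to the tilting object $\D T$ and the sink $\D T_j$, using that $\D$ interchanges $\tcL$ with $\tcR$ and reverses the arrows of $Q_\Lambda$. So assume $T_j$ is a sink and fix the exchange sequence $0\to T_j^*\xrightarrow{\iota}Y\xrightarrow{g}T_j\to 0$ from mutating $T$ at $T_j$: here $Y\in\add\bar T$ is the minimal right $\add\bar T$-approximation of $T_j$, epic because $T_j$ is a sink, $\iota$ is the minimal left $\add\bar T$-approximation of $T_j^*$, $T^{(j)}=\bar T\oplus T_j^*$, $T_j^*$ is a source of $T^{(j)}$, and the indecomposable summands of $Y$ are precisely the $T_k$ admitting an irreducible morphism $T_k\to T_j$ (with multiplicity the number of arrows $k\to j$ in $Q_\Lambda$). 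By Proposition~\ref{sink.source} we also have $\sigma_j=M[1]$ for an indecomposable non-injective $M\in\cH$ and $\sigma_j^{(j)}\in\cH\backslash\inj\cH$. One can alternatively phrase $\Phi^{(j)}\circ\Phi^{-1}$ as an APR-type tilting complex at the vertex $j$ and read everything off from it, but I will work directly with $\R\Hom(T^{(j)},-)$.

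For parts (i) and (iii) the computations are short. For (i): since $M=\sigma_j[-1]$ one has $\R\Hom(T,M)=S_j[-1]$, hence $\Hom_\cH(T,M)=0$ and $\Ext^1_\cH(T_i,M)\cong\delta_{ij}\bbK$; substituting these vanishings into $\Hom_\cH(-,M)$ applied to the exchange sequence gives $\Hom_\cH(T_j^*,M)\cong\bbK$ and $\Ext^1_\cH(T_j^*,M)=0$, so $\R\Hom(T^{(j)},M)$ is one-dimensional in degree $0$ concentrated at the vertex $j$, i.e.\ $\cong S_j^{(j)}$; therefore $\sigma_j^{(j)}=M=\sigma_j[-1]$. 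For (iii): if $i\ne j$ and no $T_i\to T_j$ is irreducible, then $T_i$ is a summand of $\bar T$ but not of $Y$, so applying $\Hom_\cH(-,\sigma_i)$ to the exchange sequence (starting from $\R\Hom(T,\sigma_i)=S_i$) yields $\R\Hom(T^{(j)},\sigma_i)=S_i$ in degree $0$, i.e.\ $\cong S_i^{(j)}$, whence $\sigma_i^{(j)}=\sigma_i$.

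For (ii), let $T_k\to T_j$ be irreducible. The same computation as in (iii), now keeping the contribution of $Y$, shows that $\R\Hom(T^{(j)},\sigma_k)$ is a $\Lambda^{(j)}$-module $V$ in degree $0$ whose dimension vector has a $1$ at vertex $k$ and, at vertex $j$, the number of arrows $k\to j$ in $Q_\Lambda$. Next I would pin down the minimal right $\add\{\sigma_j[l]\mid l\in\bbZ\}$-approximation $\sigma_j^0\to\sigma_k$ defining $\cL_{\sigma_j}\sigma_k$: writing $\Hom_\cD(\sigma_j[l],\sigma_k)\cong\Ext^{-l}_\Lambda(S_j,S_k)$ and using that $\sigma_j=M[1]$ with $M\in\cH$ hereditary together with the sink hypothesis on $T_j$, these spaces are concentrated in a single value of $l$, with dimension the number of arrows $k\to j$; hence $\sigma_j^0$ is a direct sum of (shifted) copies of $M$. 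Feeding the defining triangle of $\tcL_{\sigma_j}\sigma_k$ through $\R\Hom(T^{(j)},-)$, and using part (i) to identify $\R\Hom(T^{(j)},\sigma_j^0)$ as a sum of copies of $S_j^{(j)}$ in degree $0$ together with the description of $V$, one checks that the induced map is injective with cokernel the simple module $S_k^{(j)}$; after tracking the shifts this says $\R\Hom(T^{(j)},\tcL_{\sigma_j}\sigma_k)\cong S_k^{(j)}$, i.e.\ $\tcL_{\sigma_j}\sigma_k\cong\sigma_k^{(j)}$.

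The hard part will be this last step. Identifying the minimal $\add\{\sigma_j[l]\}$-approximation of $\sigma_k$ exactly — in particular ruling out, via heredity of $\cH$ and the fact that $T_j$ is a sink, the extra summand that a relation between $j$ and $k$ in $Q_\Lambda$ could otherwise produce — and then verifying the injectivity/cokernel claim, is where the real work lies. Running this through also demands keeping three dictionaries aligned: irreducible morphisms in $\add_\cH T$ versus arrows of $Q_\Lambda$, arrows of $Q_\Lambda$ versus $\Ext^1$ between the simple $\Lambda$-modules, and the shift normalization of free mutation; I expect getting all of these, and the resulting shifts, exactly right to be the most delicate point.
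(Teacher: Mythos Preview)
The paper does not prove this proposition; it is simply quoted from H\"ubner \cite[Proposition~3.4]{Hubner2} and used as a black box. So there is no proof in the paper to compare your attempt against.

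That said, your strategy is the natural one. Your computations for (a)(i) and (a)(iii) via the exchange triangle are correct and complete, and the duality reduction of (b) to (a) is fine (note that ``$\cH$ has no nonzero injectives'' follows because $\tau$ is an autoequivalence once $\cH$ has no nonzero projectives, not from Corollary~\ref{corollary.always_2_compl} itself). For (a)(ii) you have correctly isolated the one genuine issue: showing that $\Hom_{\cD}(\sigma_j[l],\sigma_k)=\Ext_\Lambda^{-l}(S_j,S_k)$ is concentrated at $l=-1$, i.e.\ that $\Ext^2_\Lambda(S_j,S_k)=0$ whenever there is an arrow $k\to j$ in $Q_\Lambda$. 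One clean way to see this is to pass to the cluster-tilted algebra $\Gamma=\Lambda\ltimes\Ext^2_\Lambda(\D\Lambda,\Lambda)$: a nonzero $\Ext^2_\Lambda(S_j,S_k)$ would contribute a (degree-one) arrow $j\to k$ in $Q_\Gamma$, which together with the degree-zero arrow $k\to j$ gives a $2$-cycle, impossible for a cluster-tilted algebra. With $\sigma_j^0=\sigma_j[-1]^{a_k}=M^{a_k}$ thus pinned down, applying $\R\Hom(T^{(j)},-)$ to the triangle defining $\cL_{\sigma_j}\sigma_k$ and using your descriptions of $\R\Hom(T^{(j)},M)\cong S_j^{(j)}$ and of $V=\R\Hom(T^{(j)},\sigma_k)$ identifies the induced map with the inclusion $\rad V\hookrightarrow V$; its cokernel is $S_k^{(j)}$, and the extra shift in $\tcL$ exactly matches the shift in the triangle, giving $\tcL_{\sigma_j}\sigma_k\cong\sigma_k^{(j)}$ as required.
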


Together with Proposition \ref{sink.source} this shows that only the object we mutate in and its direct neighbors can change from being a sink to being a source or vice versa. In fact, we can be even more specific:

\begin{proposition} Using the notation of Proposition~\ref{prop.behavior.sink-source}, we have the following.
\begin{enumerate}
 \item If $T_{j}$ is a sink of~$T$ and there is an irreducible morphism $T_{k}\to T_{j}$ in $\add_{\cH}(T)$ such that $T_{k}$ is a sink of $T$, then $T_{k}$ is a sink of $T^{(j)}$.

 \item If $T_{j}$ is a source of $T$ and there is an irreducible morphism $T_{j}\to T_{k}$ in $\add_{\cH}(T)$ such that $T_{k}$ is a source of $T$, then $T_{k}$ is a source of $T^{(j)}$.
\end{enumerate}
\end{proposition}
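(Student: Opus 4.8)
The plan is to prove the statement directly inside $\cH$, using only the defining property of a sink (Definition~\ref{definition.sink_source}) together with the description of $T_j^{*}$ as the kernel of the minimal right $\add$-approximation of $T_j$ furnished by the proposition following Definition~\ref{definition.sink_source}; no passage to the derived category or to simple modules is needed, and the explicit mutation rule of Proposition~\ref{prop.behavior.sink-source} is not used. I will prove part~(a), leaving part~(b) to the evident duality (monomorphisms, cokernels and left approximations replacing epimorphisms, kernels and right approximations throughout).

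Assume $T_j$ is a sink of $T$, that there is an irreducible morphism $T_k\to T_j$ in $\add_{\cH}T$, and that $T_k$ is a sink of $T$. Set $\bar T = T/(T_j\oplus T_k)$, so that $T^{(j)}/T_k = \bar T\oplus T_j^{*}$ and $T_k$ is an indecomposable summand of $T^{(j)}$. Since $T_j$ is a sink, the minimal right $\add(T/T_j)$-approximation $\pi\from Z\to T_j$ is an epimorphism with $T_j^{*}=\Ker\pi$; as $Z\in\add(T/T_j)$ we may write $Z = Z_{0}\oplus T_k^{m}$ with $Z_{0}\in\add\bar T$. Since $T_k$ is a sink, the minimal right $\add(T/T_k)$-approximation $g\from Y\to T_k$ is an epimorphism with $Y\in\add(T/T_k)$; we may write $Y = Y_{0}\oplus T_j^{l}$ with $Y_{0}\in\add\bar T$.

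The crux is to substitute $Z$ for the copies of $T_j$ occurring in $Y$, and then to discard the copies of $T_k$ that this introduces. Consider the composite $h = g\circ(\operatorname{id}_{Y_{0}}\oplus\pi^{\oplus l})\from Y_{0}\oplus Z^{l}\to Y_{0}\oplus T_j^{l}\to T_k$, which is again an epimorphism since $\pi$ is. Decomposing $Z^{l} = Z_{0}^{l}\oplus T_k^{ml}$, the component of $h$ on $T_k^{ml}$ factors as $T_k^{ml}\hookrightarrow Z^{l}\xrightarrow{\pi^{\oplus l}}T_j^{l}\xrightarrow{g}T_k$, where $T_k^{ml}\to T_j^{l}$ lies in $\rad_{\add T}$ (because $T_k\not\cong T_j$) and $T_j^{l}\to T_k$ lies in $\rad_{\add T}$ (because $T_j\not\cong T_k$); hence this component lies in $\rad_{\add T}^{2}(T_k^{ml},T_k)\subseteq\rad_{\add T}(T_k^{ml},T_k)=0$, the last equality because $\End_{\cH}(T_k)=\bbK$. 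So $h$ annihilates the summand $T_k^{ml}$, and its restriction to the complementary summand is an epimorphism $h'\from Y_{0}\oplus Z_{0}^{l}\to T_k$ with $Y_{0}\oplus Z_{0}^{l}\in\add\bar T\subseteq\add T^{(j)}$.

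Finally $h'$ is non-split: a section of $h'$ would exhibit $T_k$ as a direct summand of $Y_{0}\oplus Z_{0}^{l}$, which is impossible by Krull--Schmidt since none of its indecomposable summands is isomorphic to $T_k$. Hence $h'$ is a non-split epimorphism onto $T_k$ from an object of $\add T^{(j)}$, and $T_k$ is a sink of $T^{(j)}$ by Definition~\ref{definition.sink_source}. The only delicate point is the vanishing of the $T_k$-to-$T_k$ contribution of $h$ — a radical-square argument that uses precisely that $T_k$ is a brick; the rest is bookkeeping with direct-sum decompositions and the established dictionary between sinks and epimorphic approximations. (Incidentally, the irreducibility hypothesis is not actually needed for the argument: if it fails one may take $m=0$ with nothing else changing, and in that case the conclusion is anyway immediate from Proposition~\ref{prop.behavior.sink-source}, the simple module attached to $T_k$ being then unchanged by the mutation.)
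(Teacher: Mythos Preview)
Your proof is correct and follows the same underlying idea as the paper's: use the exchange epimorphism $\pi\colon Z\twoheadrightarrow T_j$ to trade the copies of $T_j$ appearing in the old right approximation of $T_k$ for objects of $\add(T/T_j)$, thereby producing an epimorphism onto $T_k$ whose domain lives in $\add(T^{(j)})$. The paper compresses all of this into the single assertion ``it is not hard to see that $f$ factors through $g$, and therefore $g$ is also an epimorphism'', where $f$ and $g$ are the minimal right approximations of $T_k$ with respect to $T/T_k$ and $T^{(j)}/T_k$; your radical-square computation is exactly what is needed to make that assertion honest, since the substitution $T_j^l \rightsquigarrow Z^l$ introduces copies of $T_k$ that must be shown to contribute trivially before one can invoke the approximation property of $g$. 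Your formulation---exhibiting a non-split epimorphism $h'$ from $\add\bar T$ directly and appealing to Definition~\ref{definition.sink_source}---is in fact slightly cleaner than routing through the new minimal approximation, and your closing observation that the irreducibility hypothesis is not used in the argument is also correct.
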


\begin{proof} \mbox{}
\begin{enumerate}
 \item If $T_{k}$ is a sink then $f\from X \to T_{k}$, the minimal right $\add_{\cH}(T/T_{k})$-approximation of $T_{k}$ is an epimorphism.  Let $g\from Y \to T_{k}$ be the minimal right $\add_{\cH}(T^{(j)}/T_{k})$-approximation of  $T_{k}$. It is not hard to see that $f$ factors through $g$, and therefore $g$ is also an epimorphism. Hence the result follows.
 
 \item Dual to (a). \qedhere
\end{enumerate}
\end{proof}

\section{Graded mutation} \label{section.gradmut}

In this section we develop the theory of graded mutation for cluster-tilted algebras. This is an adaptation of the mutation rule for tilting sheaves given in \cite{Hubner1} to the cluster category. The aim is to be able to mutate the quiver of a cluster-tilted algebra without losing information about the quiver of the underlying quasitilted algebra. In order to do this, we have to choose a ``canonical'' hereditary category without nonzero projectives, from which we construct the cluster category and its grading.

 Throughout this section, define $\cH_{*}$ to be either the hereditary category $\cohX$ for some weighted projective line $\bbX$, or $\cI[-1]\vee \cP \vee \cR$ where $\cP$ (resp.\ $\cR,\cI$) is the preprojective (resp.\ regular, preinjective) component of a hereditary algebra $H$ of infinite representation type. Observe that both types of categories coincide when $\mod H$ is tame and $\cohX$ is tame domestic. In this setup the Auslander-Reiten translate $\tau\from \cH_{*} \to \cH_{*} $ is an equivalence, and $\cH_{*}$ has no nonzero projectives (resp.\ injectives). Note that every almost complete tilting object in $\cH_*$ has exactly two complements (see \cite[Corollary~0.4]{Hubner3} for the $\cohX$ case, and Corollary~\ref{corollary.always_2_compl} for the general case).
  
Let $\cC_{*}:=\cC_{\cH_{*}}$ be the cluster category of $\cH_{*}$. Then $\cC_{*}$ has the same objects as $\cH_{*}$ and $\bbZ$-graded morphism spaces by
\[ \Hom_{\cC_{*}}(X,Y):= \underbrace{\Hom_{\cH_{*}}(X,Y)}_{\text{degree } 0} \oplus \underbrace{\Ext_{\cH_{*}}^1(X,\tau^{-1}Y)}_{\text{degree } 1} \text{ for } X,Y\in\cC_{*}. \]
We call this grading the {\em natural} grading of $\cC_*$. There is a natural one-to-one correspondence between tilting objects in $\cH_*$ and cluster-tilting objects in $\cC_*$ (\cite[Proposition~3.4]{BMRRT}). For a tilting object $T \in \cH_*$  we also write $T$ for its image in $\cC_*$.

Fix a cluster-tilting object $T\in\cC_{*}$  and denote its endomorphism ring by $\Gamma=\End_{\cC_{*}}(T)$. Then $\Gamma$ can be
seen as the trivial extension $\End_{\cH_{*}}(T) \ltimes\Ext_{\cH_{*}}^1(T,\tau^- T)$ (\cite[proof of 3.1]{Z}). It was observed in \cite{ABS} that the bimodules $\Ext^1_{\cH_{*}}(T,\tau^- T)$ and $\Ext^2_{\Lambda }(\D\Lambda ,\Lambda )$ are isomorphic, where $\Lambda =\End_{\cH_*}(T)$ and $\D$ is the usual $\bbK$-duality (see \cite[Section~3.1]{Ringel4} for an elementary proof without using derived categories). Hence, one can also define $\Gamma$  as the trivial extension $\Lambda  \ltimes \Ext^2_{\Lambda
}(\D\Lambda ,\Lambda )$. The isomorphism between $\Ext^1_{\cH_{*}}(T,\tau^- T)$ and $\Ext^2_{\Lambda }(\D\Lambda ,\Lambda )$ also gives us a one-to-one correspondence between the irreducible morphisms of degree one in $\add_{\cC_*} T$ and the minimal relations of $\Lambda $, and we identify these two sets\footnote{Note that the minimal relations go in the opposite direction of the degree-one arrows.}. 
  
For $\cH_*=\cohX$, the identification above allows us to talk about rank as an additive function for cluster-tilting objects in $\cC_{*}$ by letting $\rank_{\cC_*} X := \rank_{\cH_*}X$ for $X\in \cC_*$. Then the following is an immediate consequence of Theorem~\ref{rank-is-additive}.

\begin{theorem} Let $\cH_*=\cohX$ for some weighted projective line $\bbX$ and let $T=\oplus_{i=1}^n T_i$ be a cluster-tilting object in $\cC_*$, with $T_i$ indecomposable. Then $\rank_{\cC_*}$ is an additive function on $T$, i.e.

\begin{itemize}
\item[(i)]  $\rank_{\cC_*}(T_i) \geq 0$ for all $1 \leq i \leq n$, and
\item[(ii)] $2 \cdot \rank_{\cC_*}(T_i) = \sum\limits_{j} a(j,i) \cdot \rank_{\cC_*}(T_j)  
- \sum\limits_{j} b(j,i) \cdot \rank_{\cC_*}(T_j)$.
\end{itemize}
Here $a(i,j)$ denotes the number of arrows between the vertices in $Q_T$ corresponding to degree zero morphisms between $T_i$ and $T_j$ (in either direction, $a(i,j)=0$ if there are no such arrows), and similarly $b(i,j)$ denotes the number of arrows between the vertices corresponding to degree one morphisms between $T_i$ and $T_j$.
\end{theorem}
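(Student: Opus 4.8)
The plan is to deduce this directly from Theorem~\ref{rank-is-additive} by comparing the quiver $Q_T$ of $\End_{\cohX}(T)$ with the graded quiver $Q_\Gamma$ of the cluster-tilted algebra $\Gamma = \End_{\cC_*}(T)$. First I would note that, since $\cH_* = \cohX$ has no nonzero projectives, the passage from $\cH_*$ to $\cC_*$ is the one described in Section~\ref{section.cluster.cats}: $T$ is simultaneously a tilting object in $\cohX$ and (its image) a cluster-tilting object in $\cC_*$, and $\rank_{\cC_*}$ is \emph{defined} to agree with $\rank_{\cohX}$ on objects. Condition~(i) is then immediate from Theorem~\ref{rank-is-additive}(i), since the $T_i$ are nonzero objects of $\cohX$.

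For condition~(ii), the key point is to identify the numbers $a(i,j)$ and $b(i,j)$ appearing here with the corresponding numbers $a'(i,j)$ (arrows) and $b'(i,j)$ (relations) in $Q_T$ for the quasitilted algebra $\Lambda = \End_{\cohX}(T)$. By the discussion preceding this theorem (the trivial-extension description $\Gamma = \Lambda \ltimes \Ext^2_\Lambda(\D\Lambda,\Lambda)$ and the identification of the degree-one irreducible morphisms in $\add_{\cC_*} T$ with the minimal relations of $\Lambda$), the degree-zero arrows of $Q_\Gamma$ between vertices $i$ and $j$ are exactly the arrows of $Q_\Lambda = Q_T$ between $i$ and $j$, so $a(i,j) = a'(i,j)$; and the degree-one arrows of $Q_\Gamma$ between $i$ and $j$ are in bijection with the minimal relations of $\Lambda$ between the corresponding vertices, so $b(i,j) = b'(i,j)$ (the footnote about the reversed direction is harmless here since $a$ and $b$ count arrows/relations in either direction). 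Once this dictionary is in place, the additivity identity~(ii) is literally the identity in Theorem~\ref{rank-is-additive}(ii) for the tilting sheaf $T$, transported verbatim. Since $\rank_{\cC_*}(T_i) = \rank_{\cohX}(T_i)$, the same linear equations hold.

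The one genuinely substantive step is justifying the claim $b(i,j) = b'(i,j)$, i.e.\ that the degree-one arrows of the graded cluster-tilted quiver really do correspond precisely to the \emph{minimal} relations of $\Lambda$ counted as in the definition of additivity (with multiplicity, and between the right pair of vertices). This rests on the isomorphism $\Ext^1_{\cH_*}(T,\tau^- T) \cong \Ext^2_\Lambda(\D\Lambda,\Lambda)$ from \cite{ABS} together with the standard fact (for quasitilted $\Lambda$, where $\pd S_i \le 2$) that $\dim \Ext^2_\Lambda(S_j, S_i)$ counts the minimal relations from $j$ to $i$; combining these gives that the degree-one part of $\rad\Gamma/\rad^2\Gamma$ between vertices $i$ and $j$ has dimension equal to the number of minimal relations of $\Lambda$ between those vertices. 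I expect this bookkeeping — matching the combinatorial data $a,b$ of $Q_\Gamma$ to the combinatorial data $a',b'$ of $Q_T$ — to be the only place requiring care; everything else is a direct appeal to Theorem~\ref{rank-is-additive}.
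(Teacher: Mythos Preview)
Your proposal is correct and follows exactly the paper's approach: the paper states this theorem as ``an immediate consequence of Theorem~\ref{rank-is-additive}'' via the identification (spelled out just before) of degree-one arrows in $Q_\Gamma$ with minimal relations of $\Lambda$ through $\Ext^1_{\cH_*}(T,\tau^-T)\cong\Ext^2_\Lambda(\D\Lambda,\Lambda)$. Your write-up is simply a more explicit unpacking of that one-line deduction.
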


  Since we have a $\bbZ$-grading on the morphism spaces, the quiver $Q_{\Gamma}$ of $\Gamma=\End_{\cC_{*}}(T)$ inherits a natural $\bbZ$-grading. One natural question to ask is if one can adapt the mutation rule for quivers of cluster-tilted algebras to carry the grading information. 

 This is in fact possible, but before we introduce this graded mutation rule, we present some propositions that give  a better understanding of the the $\bbZ$-grading for the endomorphism ring of cluster-tilting objects.

We call an indecomposable summand of a cluster-tilting object a {\em sink} (resp.\ {\em source}), if it is a sink (resp.\ source) of the corresponding tilting object in $\cH_*$ (see Section~\ref{section.sink.source}). Then we have the following.

\begin{proposition}
Let $T$ be a cluster-tilting object in $\cC_*$. Let $f \from T_i \to T_j$ be a homogeneous irreducible morphism in $\cC_*$ between indecomposable objects $T_i, T_j \in \add T$. Then the degree of $f$ is one if and only if $T_i$ is a sink and $T_j$ is a source. In particular any two homogeneous irreducible morphisms from $T_i$ to $T_j$ have the same degree.
\end{proposition}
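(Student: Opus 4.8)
The plan is to translate the claim about homogeneous irreducible morphisms in $\cC_*$ into a statement about the positions of certain objects in $\cD = D^b(\cH_*)$, using the grading $\Hom_{\cC_*}(T_i,T_j) = \Hom_{\cH_*}(T_i,T_j) \oplus \Ext^1_{\cH_*}(T_i,\tau^{-1}T_j)$. First I would note that a homogeneous irreducible morphism $f\from T_i \to T_j$ in $\add_{\cC_*}T$ either has degree zero, meaning it lifts to an irreducible morphism in $\add_{\cH_*}T$ (equivalently an arrow in $Q_\Lambda$ for $\Lambda = \End_{\cH_*}(T)$), or it has degree one, meaning it corresponds to a nonzero element of $\Ext^1_{\cH_*}(T_i,\tau^{-1}T_j)$; by the identification recalled in Section~\ref{section.gradmut} (via \cite{ABS}) the degree-one irreducible morphisms correspond precisely to the minimal relations of $\Lambda$, i.e. to the arrows of $Q_\Lambda$ running in the opposite direction in the $\Ext^2_\Lambda(\D\Lambda,\Lambda)$ part of the trivial extension. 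So the task reduces to: a relation in $\Lambda$ starting at $T_i$ and ending at $T_j$ (in the $\Lambda$-quiver sense) exists if and only if $T_i$ is a sink and $T_j$ is a source of $T$ in $\cH_*$.

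Next I would bring in Proposition~\ref{sink.source}, which characterizes whether $T_k$ is a sink, a source, or neither in terms of the position of $T\otimes^{\L}_\Lambda S_k$ in $\cD$ (in $\cH_*$, in $\cH_*[1]$, or — in the injective case, which cannot occur here since $\cH_*$ has no nonzero injectives — among injectives). Since $\cH_*$ has no nonzero projectives or injectives, Corollary~\ref{corollary.always_2_compl} applies and every indecomposable summand is either a sink or a source. The key is to understand, for an arrow $\alpha\from i \to j$ or a minimal relation in $Q_\Lambda$, what the relative positions of $T\otimes^{\L}_\Lambda S_i$ and $T\otimes^{\L}_\Lambda S_j$ must be. The plan here is to use the projective resolution machinery from the proof of Lemma~\ref{lemma.right_approx_mono_epi}: an arrow $i\to j$ in $Q_\Lambda$ means $P_j$ appears in the first syzygy $\Omega S_i$ (so $\Hom_\Lambda(P_j, \rad P_i)\neq 0$ at the level of tops), while a minimal relation from $i$ to $j$ means $P_j$ appears in the second syzygy $\Omega^2 S_i = \oplus_k P_k$. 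Applying $T\otimes^{\L}_\Lambda -$ to the two-step projective resolution of $S_i$ and tracking whether the syzygy terms land in $\cH_*$ or $\cH_*[1]$, one sees: an arrow $i\to j$ of degree zero forces $T_i,T_j$ to sit compatibly (and in particular is consistent with, e.g., $T_j$ not being forced to be a source), whereas the only way a minimal relation $i\to j$ can occur is when $T\otimes^{\L}_\Lambda S_i$ lies in $\cH_*[1]$ — i.e. $T_i$ is a sink — and simultaneously $T_j$ sits so that $T\otimes^{\L}_\Lambda S_j$ lies in $\cH_*$ — i.e. $T_j$ is a source.

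For the cleaner argument I would actually prefer to invoke Proposition~\ref{prop.behavior.sink-source} together with the relationship between $Q_\Gamma$ and the sink/source distribution: a degree-one arrow $T_i\to T_j$ in $Q_\Gamma$ exists precisely when $S_i$ and $S_j$ are positioned one shift apart in a way that only a sink-to-source configuration produces, and conversely the ``In particular'' clause then follows because the parity/degree is determined by the pair $(T_i,T_j)$ alone and not by the choice of irreducible morphism, since the degree of a homogeneous irreducible morphism is intrinsic to which summand of the graded $\Hom$-space it lies in. The last sentence of the proposition is then immediate: if $f,g\from T_i\to T_j$ are both homogeneous and irreducible, the common criterion ``$T_i$ a sink and $T_j$ a source'' is satisfied or not independently of $f,g$, so $\deg f = \deg g$.

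The main obstacle I expect is the bookkeeping in the ``only if'' direction: showing that a degree-zero irreducible morphism $T_i\to T_j$ cannot coexist with $T_i$ a sink and $T_j$ a source, and dually that when $T_i$ is a sink and $T_j$ a source every irreducible morphism between them is forced to be a relation (degree one) rather than an honest arrow. This amounts to a careful analysis, via Lemmas~\ref{lemma.right_approx_mono_epi} and \ref{lemma.left_approx_mono_epi}, of which minimal left/right $\add\bar T$-approximation maps can be irreducible in $\cH_*$ versus which ``irreducibility'' is an artifact of the cluster category's extra degree-one morphisms; concretely one must rule out, for $T_i$ a sink, a nonzero degree-zero map $T_i\to T_j$ being irreducible in $\add_{\cC_*}T$, using that $T_i$ being a sink means the minimal right $\add(T/T_i)$-approximation of $T_i$ is an epimorphism with kernel the source $T_i^*$, so any map out of $T_i$ in $\cH_*$ factors nontrivially. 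I would handle this by a direct factorization argument in $\cH_*$, then lift to $\cC_*$ using that degree-one maps compose to zero so they cannot patch up a factorization of a degree-zero map.
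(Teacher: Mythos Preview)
Your approach is essentially the paper's: translate degree-zero arrows into $\Ext^1_\Lambda$ between the corresponding simples and degree-one arrows into $\Ext^2_\Lambda$, transport via $T\otimes^{\L}_\Lambda -$ to $D^b(\cH_*)$, and then read off the positions of $T\otimes^{\L}_\Lambda S_i$ and $T\otimes^{\L}_\Lambda S_j$ in $\cH_*$ versus $\cH_*[1]$ using Proposition~\ref{sink.source}.

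Where you diverge is in the last paragraph. The ``main obstacle'' you anticipate is not one: the $\Ext$-computation already handles both directions simultaneously, and no factorization argument is needed. Concretely, $\deg f = 0$ gives $\Ext^1_\Lambda(S_j,S_i)\neq 0$, hence $\Hom_{\cD}(T\otimes^{\L}_\Lambda S_j, (T\otimes^{\L}_\Lambda S_i)[1])\neq 0$; since each $T\otimes^{\L}_\Lambda S_k$ lies in $\cH_*$ or $\cH_*[1]$ and $\cH_*$ is hereditary, this immediately rules out $(T\otimes^{\L}_\Lambda S_j\in\cH_*$ and $T\otimes^{\L}_\Lambda S_i\in\cH_*[1])$, i.e.\ rules out ``$T_j$ source and $T_i$ sink''. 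Likewise $\deg f = 1$ gives $\Ext^2_\Lambda(S_i,S_j)\neq 0$, hence $\Hom_{\cD}(T\otimes^{\L}_\Lambda S_i,(T\otimes^{\L}_\Lambda S_j)[2])\neq 0$, which \emph{forces} $T\otimes^{\L}_\Lambda S_i\in\cH_*[1]$ and $T\otimes^{\L}_\Lambda S_j\in\cH_*$. Since $\deg f\in\{0,1\}$, these two implications already give the ``if and only if''. Your proposed detour through Proposition~\ref{prop.behavior.sink-source} and the hands-on factorization analysis is unnecessary.
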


\begin{proof}
Let $S_i,S_j$ be the two simple  $\Lambda$-modules corresponding to the tops of $P_i:=\Hom_{\cH_*}(T,T_i)$ and $P_j:=\Hom_{\cH_*}(T,T_j)$, respectively. Then we have
\begin{align*}
\deg f = 0 & \Longrightarrow \Ext_{\Lambda}^1(S_j, S_i) \neq 0 \\
\deg f = 1 & \Longrightarrow \Ext_{\Lambda}^2(S_i, S_j) \neq 0.
\end{align*}
Going back to the derived category of $\cH_*$, we obtain
\begin{align*}
\deg f = 0 & \Longrightarrow \Hom_{D^b(\cH_*)}(T \otimes_{\Lambda}^{\L} S_j, T \otimes_{\Lambda}^{\L} S_i[1]) \neq 0 \\
\deg f = 1 & \Longrightarrow \Hom_{D^b(\cH_*)}(T \otimes_{\Lambda}^{\L} S_i, T \otimes_{\Lambda}^{\L} S_j[2]) \neq 0.
\end{align*}
Since $T \otimes_{\Lambda}^{\L} S_i$ and $T \otimes_{\Lambda}^{\L} S_j$ lie in $\cH_*$ or $\cH_*[1]$, and since $\cH_*$ is hereditary, it follows that
\begin{align*}
\deg f = 0 & \Longrightarrow \text{not }( T \otimes_{\Lambda}^{\L} S_j \in \cH_* \text{ and } T \otimes_{\Lambda}^{\L} S_i \in \cH_*[1]) \\
\deg f = 1 & \Longrightarrow T \otimes_{\Lambda}^{\L} S_i \in \cH_*[1] \text{ and } T \otimes_{\Lambda}^{\L} S_j \in \cH_*.
\end{align*}
Now the claim follows from Proposition~\ref{sink.source}.
\end{proof}

As an immediate consequence we obtain the following.

\begin{corollary} \label{corollary.ss_gives_grading} 
Let $Q$ be the quiver of a cluster-tilting object in $\cC_*$. Assume that for each vertex of $Q$ we know whether it corresponds to a sink or to a source. Then we can recover the $\bbZ$-grading of the arrows of $Q$.
\end{corollary}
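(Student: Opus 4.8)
The plan is to read off the grading directly from the sink/source labelling, using the Proposition immediately above as the only real input. Recall that the quiver $Q$ of a cluster-tilting object $T = \oplus_i T_i$ in $\cC_*$ carries a $\bbZ$-grading on its arrows, where an arrow from the vertex corresponding to $T_i$ to the vertex corresponding to $T_j$ records a homogeneous irreducible morphism $T_i \to T_j$ in $\add_{\cC_*} T$, and its degree is either $0$ or $1$ since any composition of degree-one morphisms in $\cC_*$ vanishes.

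First I would note that the preceding Proposition tells us exactly when such an arrow has degree one: a homogeneous irreducible morphism $f \from T_i \to T_j$ has $\deg f = 1$ if and only if $T_i$ is a sink and $T_j$ is a source, and moreover any two homogeneous irreducible morphisms between the same pair $(T_i, T_j)$ share the same degree. So given the quiver $Q$ together with the data of which vertices are sinks and which are sources, the recipe is simply: for each arrow $\alpha$ from the vertex of $T_i$ to the vertex of $T_j$, assign $\deg \alpha = 1$ if $T_i$ is labelled a sink and $T_j$ is labelled a source, and assign $\deg \alpha = 0$ otherwise. By the Proposition this assignment agrees with the natural $\bbZ$-grading inherited from $\cC_*$ on every arrow, so the grading is recovered.

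The one subtlety worth addressing is that, when $\cH_* = \cI[-1] \vee \cP \vee \cR$ comes from a hereditary algebra with nonzero injectives, an indecomposable summand of the tilting object in $\cH_*$ might a priori be neither a sink nor a source (case (c) of Proposition~\ref{sink.source}). However, by the running assumptions of this section the relevant tilting objects lie in $\cP \vee \cR$ — equivalently, after replacing $T$ by $\tau^{-n}T$ for $n \gg 0$ we may assume no summand is preinjective — and then every summand is either a sink or a source, as recorded after Proposition~\ref{sink.source}; for $\cH_* = \cohX$ this holds automatically by Corollary~\ref{corollary.always_2_compl}. Hence the dichotomy ``sink or source'' really is exhaustive, every arrow falls under the case analysis of the Proposition, and there is nothing further to check. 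There is essentially no obstacle here: the corollary is a direct restatement of the Proposition, the only thing to be careful about being the exhaustiveness of the sink/source labelling, which is guaranteed by the standing hypotheses on $\cH_*$.
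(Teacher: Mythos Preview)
Your core argument is correct and matches the paper exactly: the corollary is stated there as an ``immediate consequence'' of the preceding Proposition, and your first two paragraphs spell out precisely that immediate consequence---an arrow has degree $1$ if and only if it goes from a sink to a source, otherwise degree $0$.

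One small point about your final paragraph: the worry you raise does not actually arise, and your resolution of it mixes up two settings. By the standing definition of $\cH_*$ at the start of Section~\ref{section.gradmut}, the Auslander--Reiten translate $\tau$ is an autoequivalence on $\cH_*$, so $\cH_*$ has no nonzero injectives (nor projectives). Hence case~(c) of Proposition~\ref{sink.source} is vacuous in $\cH_*$, and Corollary~\ref{corollary.always_2_compl} applies directly to both choices of $\cH_*$, not just $\cohX$. The business of replacing $T$ by $\tau^{-n}T$ and the $\nsink$/$\nsource$ notions are devices for working inside $\mod H$ (which does have injectives); once one has passed to $\cH_*$ they are no longer needed. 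Also, tilting objects in $\cH_*$ need not lie in $\cP \vee \cR$---they may have summands in $\cI[-1]$---so that part of your justification is not quite right. None of this affects the validity of the main argument; the last paragraph can simply be dropped or replaced by a one-line appeal to Corollary~\ref{corollary.always_2_compl}.
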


 Unfortunately, the converse is not true, as the following example illustrates.

\begin{example}\label{example.several_sink_source_dist}
Let $\cH_*=\cohX$ where $\bbX$ is a weighted projective line of type $(2,2,2,2)$. The following sink-source distributions are valid for the canonical algebra in $\cH_*$. 
\[ 
\begin{tikzpicture}[xscale=2,yscale=-2]
 \node (1-0) at (0,1) [source] {$1$};
 \node (1-1) at (1,0) [source] {$1$};
 \node (1-2) at (1,0.5) [source] {$1$};
 \node (1-3) at (1,1.5) [source] {$1$};
 \node (1-4) at (2,1) [sink] {$1$};
 \node (1-5) at (1,2) [source] {$1$};
 \node (2-0) at (3,1) [source] {$2$};
 \node (2-1) at (4,0) [sink] {$1$};
 \node (2-2) at (4,0.5) [sink] {$1$};
 \node (2-3) at (4,1.5) [sink] {$1$};
 \node (2-4) at (5,1) [sink] {$0$};
 \node (2-5) at (4,2) [sink] {$1$};
\foreach \x in {1,2}{
 \draw [->] (\x-0) -- (\x-1.west);
 \draw [->] (\x-0) -- (\x-2.west);
 \draw [->] (\x-0) -- (\x-3.west);
 \draw [->] (\x-0) -- (\x-5.west);
 \draw [->] (\x-1.east) -- (\x-4);
 \draw [->] (\x-2.east) -- (\x-4);
 \draw [->] (\x-3.east) -- (\x-4);
 \draw [->] (\x-5.east) -- (\x-4);
}
\draw [dashed] (0.25,0.95) --  (1.8,0.95);
\draw [dashed] (0.25,1.05) -- (1.8,1.05);
\draw [dashed] (3.25,0.95) --  (4.8,0.95);
\draw [dashed] (3.25,1.05) -- (4.8,1.05);

\end{tikzpicture}
\]
Here the numbers denote the ranks of the indecomposable objects, circles symbolize sources, and squares symbolize sinks. Observe that different rank distributions give rise to different sink-source distributions.
\end{example}

 We are now ready to present the graded mutation rule. This rule was explicitly calculated in \cite[Corollary 4.16]{Hubner1} for tilting sheaves in $\cohX$, although the same proof works for $\cH_*$. If we forget the grading, it coincides with the quiver mutation rule of Fomin-Zelevinsky.  Therefore, we only explain how to mutate the grading. In the following, the degree zero morphisms are depicted with solid arrows and the degree-one morphisms with dashed arrows. This is done to emphasize the correspondence between minimal relations of the tilting object and degree one morphisms of the corresponding cluster-tilting object.

\begin{proposition}[Graded mutation rule] Let $T=T_l \oplus \bar{T}$ be a cluster-tilting object in $\cC_{*}$ with $T_l$ indecomposable. Denote by $T^*_l$ the other complement of $\bar{T}$. Then $T_l$ is a sink if and only if  $T^*_l$ is a source. Furthermore, if $T_l$ is a sink, we have the following.
\begin{enumerate}
\item Any degree zero arrow $T_i \stackrel{a}{\to} T_l$ is turned into a degree zero arrow $T_l^* \stackrel{a^*}{\to} T_i$.
\item Any degree zero arrow $T_l \stackrel{a}{\to} T_i$ is turned into a degree one arrow $T_i \stackrel{a^*}{\to} T_l^*$.
\item Any degree one arrow $T_l \stackrel{a}{\to} T_i$ is turned into a degree zero arrow $T_i \stackrel{a^*}{\to} T_l^*$.
\item The degree of new arrows which are formal compositions $[fg]$ is the sum of the degrees of $f$ and $g$.
\end{enumerate} 
\end{proposition}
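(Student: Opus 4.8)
The plan is to split the proof into three parts: the equivalence ``$T_{l}$ is a sink if and only if $T_{l}^{*}$ is a source'', the (already available) fact that forgetting the grading the rule is Fomin--Zelevinsky mutation, and the determination of the degrees of the arrows of the mutated quiver from the sink/source distribution of the new cluster-tilting object.

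For the first part I would view $T$ as the corresponding tilting object in $\cH_{*}$ and use the structural results of Section~\ref{section.sink.source}: the mutation $T_{l}^{*}$ of $\bar{T}$ is the cokernel of the minimal left $\add \bar{T}$-approximation of $T_{l}$ and is a sink of $T_{l}^{*}\oplus\bar{T}$ when $T_{l}$ is a source, and dually it is the kernel of the minimal right $\add \bar{T}$-approximation and a source when $T_{l}$ is a sink. Since $\cH_{*}$ has no nonzero projectives (nor injectives), Corollary~\ref{corollary.always_2_compl} (equivalently Proposition~\ref{sink.source}) says every indecomposable summand of a tilting object in $\cH_{*}$ is a sink or a source and not both; combining these statements yields the equivalence.

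Now assume $T_{l}$ is a sink and set $T^{(l)}=T_{l}^{*}\oplus\bar{T}$. The engine for the grading is the proposition preceding Corollary~\ref{corollary.ss_gives_grading}: an arrow between summands of a cluster-tilting object has degree one precisely when its source is a sink and its target is a source of the underlying tilting object. Thus the degrees in the mutated quiver $Q_{T^{(l)}}$ can be read off once the sink/source distribution of $T^{(l)}$ is known, and that distribution is controlled by Proposition~\ref{prop.behavior.sink-source} together with Proposition~\ref{sink.source}: mutating at the sink $T_{l}$ makes $T_{l}^{*}$ a source and changes the status of a summand $T_{k}$ only if there is an irreducible morphism $T_{k}\to T_{l}$ in $\add_{\cH_{*}}T$ (a ``degree-zero in-neighbour'' of $T_{l}$); every other summand keeps its status. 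Since cluster-tilted algebra quivers have no loops or $2$-cycles, any summand receiving an arrow from $T_{l}$ admits no arrow back to $T_{l}$ and so is not such a neighbour, which keeps the case analysis short. Explicitly: an arrow out of $T_{l}^{*}$ has degree zero, as $T_{l}^{*}$ is now a source and hence not a sink (item (a)); a degree-zero arrow $T_{l}\to T_{i}$ forces $T_{i}$ to be a sink of $T$ (because the sink $T_{l}$ is not a source), $T_{i}$ stays a sink, so its reversal $T_{i}\to T_{l}^{*}$ has degree one (item (b)); dually a degree-one arrow $T_{l}\to T_{i}$ forces $T_{i}$ to be a source, which it remains, so the reversal has degree zero (item (c)). For a new arrow $[fg]\colon T_{i}\to T_{j}$ coming from a path $T_{i}\xrightarrow{g}T_{l}\xrightarrow{f}T_{j}$, the arrow $g$ points into the sink $T_{l}$ so $\deg g=0$, and $T_{j}$ is not a neighbour of $T_{l}$; if $\deg f=0$ then $T_{j}$ is and remains a sink, so $[fg]$ has degree $0$, and if $\deg f=1$ then $T_{j}$ is and remains a source, so $[fg]$ has degree $1$ provided $T_{i}$ becomes a sink of $T^{(l)}$ --- in either case $\deg[fg]=\deg f+\deg g$ (item (d)).

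The step I expect to be the main obstacle is exactly this last one: verifying that the in-neighbour $T_{i}$ at the tail of a new arrow produced from a degree-one arrow out of $T_{l}$ becomes a sink of $T^{(l)}$. I would extract this from Proposition~\ref{prop.behavior.sink-source}, where $S_{i}^{(l)}\simeq\tcL_{S_{l}}S_{i}$, by checking that this free left mutation of the simple module lies in $\cH_{*}[1]$ rather than $\cH_{*}$; the proposition following Proposition~\ref{prop.behavior.sink-source} settles the subcase where $T_{i}$ was already a sink, and a short computation with mutations of exceptional pairs handles what is left. Alternatively --- and more cheaply --- one observes that the entire graded rule is H{\"u}bner's mutation rule for tilting sheaves \cite[Corollary~4.16]{Hubner1} rewritten under the identifications ``degree-zero arrow $\leftrightarrow$ arrow of $\Lambda=\End_{\cH_{*}}(T)$'' and ``degree-one arrow $\leftrightarrow$ minimal relation of $\Lambda$'' fixed at the start of this section, and that H{\"u}bner's proof uses only features common to every $\cH_{*}$ (hereditariness, the tilting correspondence, and $\Lambda$ being quasitilted).
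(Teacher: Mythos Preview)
Your proposal is sound and in fact more detailed than what the paper offers: the paper gives no proof for this proposition, instead declaring just before the statement that ``this rule was explicitly calculated in \cite[Corollary~4.16]{Hubner1} for tilting sheaves in $\cohX$, although the same proof works for $\cH_{*}$''. Your closing alternative is therefore exactly the paper's route.

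The direct argument you give for the sink/source equivalence and for items (a)--(c), via the characterization of degree-one arrows (the proposition preceding Corollary~\ref{corollary.ss_gives_grading}) combined with Proposition~\ref{prop.behavior.sink-source}, is correct and more illuminating than the bare citation. You also pinpoint precisely the one nontrivial step remaining in (d): when $\deg f=1$, one must show that the in-neighbour $T_{i}$ becomes a sink of $T^{(l)}$. This does not drop out of Proposition~\ref{prop.behavior.sink-source} or its successor (which only guarantees that sinks among in-neighbours stay sinks), and your two proposed fixes --- computing where the free left mutation $\tcL_{S_{l}}S_{i}$ lands, or invoking H\"ubner --- are both legitimate. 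The paper simply takes the second.
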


\begin{remark}
There is a dual statement for the case when $T_l$ is a source.
\end{remark}

 Observe that in order to apply the graded mutation rule, we must know whether the indecomposable summand of the cluster-tilting object we are going to replace is a sink or a source. Note that we may lose information about the sink-source distribution after graded mutation (see Example~\ref{example.lose.sink-source.rank}).
 
As a first application of the graded mutation rule we present the following proposition, which clarifies to what extent we can have a converse for Corollary~\ref{corollary.ss_gives_grading}. 

\begin{proposition}\label{prop.uniqueness_sink-source} 
Let $\cH_*$ be as in the beginning of the section. We write $\cH_{H}$ if $\cH_*$ is constructed from $\mod H$ and $\cH_{\bbX}$ if $\cH_*=\cohX$. Similarly, we write $\cC_H$ (resp.\ $\cC_{\bbX}$) if the cluster category is constructed from $\cH_H$ (resp.\ $\cH_{\bbX}$). Then we have the following.

\begin{enumerate}
\item Let $Q$ be the (ungraded) quiver of a cluster-tilting object in $\mathcal{C}_H$. Then $Q$ has a unique sink-source distribution and a unique grading.
\item Let $Q$ be the graded quiver of a cluster-tilting object in $\mathcal{C}_{\bbX}$. If $\bbX$ is of tubular type, then assume additionally that the cluster tilting object has at least one indecomposable summand of rank $0$ (by using tubular mutations any cluster-tilting object can be turned into one satisfying this assumption). Then $Q$ has a unique sink-source distribution. Moreover, one can use the grading to recover this sink-source distribution and, by graded mutation, one can transfer this information to the mutation component.
\end{enumerate}

\end{proposition}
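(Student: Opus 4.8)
The plan is to treat the two cases separately, in each case exploiting the way the simple $\Lambda$-modules $T\otimes_\Lambda^{\L} S_i$ sit in the derived category, as controlled by Proposition~\ref{sink.source} and the behaviour results of Proposition~\ref{prop.behavior.sink-source}.

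For part~(a), recall that $\mod H$ has a preinjective component $\cI$, and that $\cH_H = \cI[-1]\vee\cP\vee\cR$. Given a cluster-tilting object $T$ in $\cC_H$, I would first argue that the underlying tilting object may be chosen (possibly after twisting by a power of $\tau$, which does not change $\Gamma$ or its quiver, cf.\ the remark following the definition of source$^-$) so that $T\in\cP\vee\cR\subseteq\cH_H$ with \emph{no} preinjective summands. Then every $T\otimes_\Lambda^{\L}S_i$ lies in $\cP\vee\cR$ or $(\cP\vee\cR)[1]$ by Proposition~\ref{sink.source}, and in particular never in $\inj\cH_H$ (the injectives of $\cH_H$ are the objects of $\cI[-1]$, which has been shifted away). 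Hence each summand is strictly a sink or strictly a source, so the sink-source distribution of this particular tilting object is determined. To see that the distribution is an invariant of the ungraded quiver $Q$ alone, I would invoke the previous proposition (the one using $S_k^{(j)}\simeq\tcL_{S_j}S_k$ etc.): being a sink versus a source of the neighbours of the mutated vertex is propagated in a way that is forced by the \emph{ungraded} mutation rule together with the requirement that no $T\otimes_\Lambda^{\L}S_i$ land in $\inj\cH_H$. Uniqueness of the grading then follows from Corollary~\ref{corollary.ss_gives_grading}.

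For part~(b), $\cH_{\bbX}=\cohX$ has no nonzero injectives, so by Corollary~\ref{corollary.always_2_compl} every summand is a sink or a source; thus some sink-source distribution exists and I only need uniqueness from the graded quiver. Here the key tool is additivity of rank (Theorem~\ref{rank-is-additive} and its cluster-category version): given the graded quiver $Q$, the degree zero part determines the coefficients $a(j,i)$ and the degree one part the $b(j,i)$, so rank is an additive function, and — for the domestic and wild types — the associated linear system pins down the ranks up to scaling; the tubular case is exactly where an extra rank-$0$ summand is needed to break the scaling ambiguity, which is why it is assumed. Once the ranks are known, Proposition~\ref{sink.source} says $T_i$ is a sink iff $T\otimes_\Lambda^{\L}S_i\in\cH_*[1]$, i.e.\ iff $\rank(T\otimes_\Lambda^{\L}S_i)<0$; and $\rank(T\otimes_\Lambda^{\L}S_i)$ can be computed from the ranks of the $T_j$ and the arrow/relation data around vertex $i$ (it is, up to sign, $2\rank T_i-\sum a(j,i)\rank T_j+\sum b(j,i)\rank T_j$ read the correct way round), so the sink-source status of each vertex is forced. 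Finally, the graded mutation rule (the Graded mutation rule Proposition together with its dual) lets us carry this along: after a graded mutation at a sink $T_l$, the vertex $T_l^*$ becomes a source, arrow degrees transform as prescribed, and the degree of every composite is additive — so the sink-source distribution we have just computed transfers consistently to the whole mutation component.

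The main obstacle I anticipate is the tubular case of part~(b): there the rank function alone does not determine the ranks (only up to a global scalar, and in the tubular setting there is genuinely a one-parameter family of normalizations corresponding to the slope), so the hypothesis of a rank-$0$ summand must be used in an essential way — I would need to check that such a summand forces the normalization, i.e.\ that the additive system together with "$\rank T_{i_0}=0$ and $\rank T_j\ge 0$" has a unique solution, and that this is precisely what makes the sign of each $\rank(T\otimes_\Lambda^{\L}S_i)$ well defined. A secondary subtlety in part~(a) is making the "choose $T$ with no preinjective summands" step rigorous and checking it genuinely preserves the (ungraded) quiver, but that is handled by the remark already in the text.
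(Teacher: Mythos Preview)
Your overall strategy for part~(b) is close to the paper's, but there are two genuine gaps.

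\textbf{Part (a).} You correctly argue that for a \emph{given} tilting object (with no preinjective summands) every summand is a sink or a source. But the statement asks for uniqueness as a function of the \emph{ungraded quiver} $Q$ alone: two different tilting objects with isomorphic ungraded quivers must have the same distribution. Your appeal to Proposition~\ref{prop.behavior.sink-source} does not give this --- that proposition describes how the distribution transforms when you mutate a \emph{known} tilting object, not how to read it off from $Q$. The paper closes this by exploiting that the mutation graph of cluster-tilting objects in $\cC_H$ is connected: there is a sequence of (quiver) mutations from $Q$ to the quiver of $H$; one then takes the actual tilting module $H$, computes the distribution there explicitly, and carries it back along the mutation sequence (replacing tilting modules by $\tau^{-n}$-shifts when necessary). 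Without some such anchoring argument, uniqueness is not established.

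\textbf{Part (b).} Two problems. First, with the paper's conventions $a(j,i)$ and $b(j,i)$ count arrows and relations \emph{in either direction}, so rank additivity says precisely that $2\rank T_i-\sum_j a(j,i)\rank T_j+\sum_j b(j,i)\rank T_j=0$; your proposed test statistic is identically zero. What you presumably want is the asymmetric quantity $\rank(T\otimes_\Lambda^{\L}S_i)=\rank T_i-\sum_{j\to i}\rank T_j+\sum_{i\to k}\rank T_k$, summing only over incoming degree-$0$ arrows and outgoing degree-$1$ arrows. Second, and more seriously, even with the correct asymmetric formula the sign test fails when $T\otimes_\Lambda^{\L}S_i$ is a (shifted) torsion sheaf: then the rank is $0$ regardless of whether it lies in $\cH_*$ or $\cH_*[1]$, so you cannot distinguish sink from source. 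The paper handles vertices with an adjacent degree-$1$ arrow directly (degree-$1$ arrows go from sinks to sources); for the remaining vertices of positive rank it uses the one-sided inequality $\rank T_0\le\sum_{T_0\to T_i}\rank T_i$; and for the rank-$0$ vertices it observes that their full subquiver is a disjoint union of cluster-tilted quivers of type $A$ with specified connecting vertices, where the distribution can be read off combinatorially. Your argument needs an analogue of this last step.
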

\begin{proof}

\mbox{}
\begin{enumerate}
\item
In this case, there exists a sequence of mutations
\[ T = T^{(0)} \underset{\mu_{i_1}}{\sim} T^{(1)} \underset{\mu_{i_2}}{\sim} \cdots \underset{\mu_{i_n}}{\sim} T^{(n)} \]
starting in the cluster tilting object $T$ corresponding to $Q$, and with $\End_{\cC_H}(T^{(n)}) = H$. Then we may assume $T^{(n)} = H$, and we can obtain the other $T^{(j)}$ explicitly as  $T^{(j)} = \mu_{i_{j+1}} \cdots \mu_{i_n} H$.

Note that, if we want to mutate in $\mod H$ (rather than in $\cH_H$), we may need to replace the tilting modules $T^{(j)}$ by a sufficiently high shift $\tau^{-n_j} T^{(j)}$ in order to obtain a well-defined sink-source distribution. Observe that we are able to calculate the sink-source distribution at each step, since we know the modules explicitly. Finally, by Corollary~\ref{corollary.ss_gives_grading} we can recover the grading of $Q$ uniquely by using the sink-source distribution of $T$.

\item  For the $\cH_{\bbX}$ case, we can use the $\bbZ$-grading to calculate the ranks of the indecomposable summands of the cluster-tilting object $T$ corresponding to $Q$ via the Cartan matrix. If $\bbX$ is not tubular, the radical of the quadratic form has rank one, and  the ranks of the indecomposable summands of $T$ are determined in a unique way by \cite[Lemma 2.5]{LM1}. If $\bbX$ is of tubular type, the radical of the quadratic form has rank two. However, the ranks of the indecomposable summands of $T$ are still uniquely determined by our additional assumption that there is a summand of rank $0$. Seeing that morphisms of degree one go from sinks to sources, it remains only to calculate the sink-source distribution at the indecomposable summands of $T$ where all adjacent morphisms have degree zero. Let $T_{0}$ be such a summand and assume $\rank_{\cC_{\bbX}} T_{0} \neq 0$. Then $T_{0}$ is a source (resp.\ a sink) if and only if
\[ \rank_{\cC_{\bbX}} T_0 \leq \sum_{\substack{\text{arrows} \\ T_0 \to T_i}} \rank_{\cC_{\bbX}} T_i \qquad (\text{resp.\ } \sum_{\substack{\text{arrows} \\ T_i \to T_0}} \rank_{\cC_{\bbX}} T_i > \rank_{\cC_{\bbX}} T_0 ). \]
Finally note that the subquiver formed by the summands of rank $0$ is a disjoint union of (graded) quivers of cluster-tilted algebras of type $A$, with connecting vertices (see \cite{Dagfinn}) where they are connected to the summands of positive rank. For such quivers the distribution of sinks and sources can be determined by comparing to the cluster-tilted algebras of type $A$.

Observe that once we know the sink-source distribution, the graded mutation rule gives us the grading after mutation. Hence we do not lose information when mutating. \qedhere
\end{enumerate}
\end{proof}

We now illustrate Proposition~\ref{prop.uniqueness_sink-source}~{(b)} with an example.

\begin{example} \label{example.lose.sink-source.rank} Let $\cC_{\bbX}$ be the cluster category of a weighted projective line $\bbX$ of type $(3,3,4)$. Denote the canonical cluster-tilting object by $T=\oplus_{0\le \vx \le \vc} \cO(\vx)$. Let $Q_{1}$ be the quiver of the endomorphism ring obtained by mutating $T$ at $\cO$. At each vertex, the first number indicates the number of the vertex and the second number (in bold face) denotes the rank of the corresponding indecomposable summand of the cluster-tilting object. The circles denote the sources and the rectangles the sinks.
\[\scalebox{1}{
\begin{tikzpicture}[scale=0.7]
 \node (1) at (-3.5,0) [sink] {$1\sep {\mathbf 2}$};
 \node (2) at (-1.68,3.08) [source] {$2\sep {\mathbf 1}$};
 \node (3) at (1.68,3.08) [source]  {$3\sep {\mathbf 1}$};
 \node (4) at (-1.4,1.56) [source]  {$4 \sep {\mathbf 1}$};
 \node (5) at (1.4,1.56) [source] {$5 \sep {\mathbf 1}$};
 \node (6) at (-1.68,-2.12) [source]  {$6\sep {\mathbf 1}$};
 \node (7) at (0,-2.5) [source]  {$7\sep {\mathbf 1}$};
 \node (8) at (1.68,-2.12) [source]  {$8\sep {\mathbf 1}$};
 \node (9) at (3.5,0) [sink] {$9 \sep {\mathbf 1}$};
  \node (Q) at (0,-3.5) {$Q_{1}$};
 \draw [<-] (1) -- (2) ;
 \draw [<-] (1) -- (4) ;
 \draw [<-] (1) -- (6) ;
 \draw [->] (1) -- (9) ;
 \draw [->] (2) -- (3) ;
 \draw [->] (3) -- (9.north) ;
 \draw [->] (4) -- (5) ;
 \draw [->] (5) -- (9) ;
 \draw [->] (6) -- (7) ;
 \draw [->] (7) -- (8) ;
 \draw [->] (8) -- (9.south) ;
 \draw [->,dashed] (9) -- (6) ;
 \draw [->,dashed] (9) -- (4) ;
 \draw [->,dashed,bend right=30] (9) to (2.345) ;
   \draw [leadsto] (4.5,0) -- (6.5,0);
 \node at (5.5,1) {$\mu_{2}$};
\end{tikzpicture}
\begin{tikzpicture}[scale=0.7]
 \node (1) at (-3.5,0) [inner sep=1pt,circle,draw,thick,dashed] {$1\sep {\mathbf 2}$};
 \node (2) at (-1.68,3.08) [sink] {$2\sep {\mathbf r}$};
 \node (3) at (1.68,3.08) [inner sep=1pt,circle,draw,thick,dashed] {$3\sep {\mathbf 1}$};
 \node (4) at (-1.4,1.56) [source] {$4\sep {\mathbf 1}$};
 \node (5) at (1.4,1.56) [source] {$5\sep {\mathbf 1}$};
 \node (6) at (-1.68,-2.12) [source] {$6\sep {\mathbf 1}$};
 \node (7) at (0,-2.5) [source] {$7\sep {\mathbf 1}$};
 \node (8) at (1.68,-2.12) [source] {$8\sep {\mathbf 1}$};
 \node (9) at (3.5,0) [sink] {$9\sep {\mathbf 1}$};
  \node (Q) at (0,-3.5) {$Q_{2}$};
 \draw [->] (1) -- (2) ;
 \draw [<-] (1) -- (4) ;
 \draw [<-] (1) -- (6) ;
 \draw [<-] (2) -- (3) ;
 \draw [->] (4) -- (5) ;
 \draw [->] (5) -- (9) ;
 \draw [->] (6) -- (7) ;
 \draw [->] (7) -- (8) ;
 \draw [->] (8) -- (9.south) ;
 \draw [->,dashed] (9) -- (6) ;
 \draw [->,dashed] (9) -- (4) ;
 \draw [<-,bend right=30] (9) to (2.345) ;
\end{tikzpicture}
}\]

We obtain the graded quiver $Q_{2}$ after mutating at the vertex $2$ in $Q_{1}$. Note that in $Q_2$ we lose information about the rank $\mathbf{r}$ of vertex $2$ and the sink/source property of vertices $1$ and $3$. By using the grading and rank additivity at vertex $2$ in $Q_2$, we recover $\mathbf{r}$ with the equation
\[
  2 \cdot \mathbf{r} =  2 + 1 + 1 = 4,
\]
and thus $\mathbf{r}= 2$. To decide if $1$ is a sink or a source, we follow the proof of Proposition~\ref{prop.uniqueness_sink-source}(b). We observe that the sum of the ranks of the vertices having an arrow from vertex $1$ to them is $2$, hence equal to the rank of vertex $1$. Therefore vertex $1$ in $Q_2$ is a source. Similarly, one can see that vertex $3$ is a source of the mutated quiver.
\end{example}

\section{Mutation of quivers of exceptional sequences}
\label{section.mut_excep_sequences}
 
In this section we introduce the notion of a quiver of an exceptional sequence (Definition~\ref{definition.quiver_except}). Then we develop a mutation rule for these quivers (Proposition~\ref{left.mutation}). The main goal is to be able to perform right and left mutations in $\cohX$ when the only information available on the exceptional sequence is the rank of its objects, and the dimension of the morphism and extension spaces between its objects (Proposition~\ref{E_M.mutation}).

\begin{definition} \label{definition.quiver_except}
Let $\varepsilon = (E_1, \ldots, E_n)$ be an exceptional sequence in a hereditary category $\cH$, and let $a_{i,j} = \dim_{\bbK} \Hom(E_i,E_j) - \dim_{\bbK} \Ext^1(E_i,E_j)$ for $1 \le i < j \le n$. It is not hard to see that for an exceptional pair $(E_i,E_j)$ at least one of $\Hom(E_i,E_j)$ or $\Ext^1(E_i,E_j)$ must be zero. We define the quiver $Q_\varepsilon$ of the exceptional sequence $\varepsilon$ as the quiver having vertices $1, \ldots, n$ and $a_{i,j}$ arrows from the vertex $i$ to the vertex $j$, where a negative number of arrows denotes arrows going in the opposite direction.
\end{definition}

\begin{remark}
Idun Reiten informed us that in joint work with Buan and Thomas \cite{BRT} they are independently introducing the same concept for $\mod H$. They show that in this case the quiver is acyclic.
\end{remark}

Let $\lambda_{\ell} \varepsilon$ be the left mutation at $\ell$ of the exceptional sequence $\varepsilon$, and denote  its quiver by $Q_{\lambda_{\ell}  \varepsilon}$. Then we have the following.

\begin{proposition}[Left mutation rule]\label{left.mutation}
Let $b_{i,j}$ denote the number of arrows from vertex $i$ to vertex $j$ in the quiver $Q_{\lambda_{\ell} \varepsilon}$, for $1 \le i < j \le n$. Then the values $b_{i,j}$ can be obtained from $Q_{\varepsilon}$ by the following equations.
\begin{align*}
  b_{i,j} = & \, a_{i,j} \text{ for } i,j \not\in \{ \ell, \ell+1 \},
\intertext{that is, arrows not involving the vertices $\ell$ or $\ell+1$ do not change,}
  b_{i,\ell+1} = & \, a_{i, \ell} \text{ for } i < \ell \qquad \text{and} \qquad  b_{\ell+1,j} = \, a_{\ell,j} \text{ for } \ell+1 < j
\intertext{that is, arrows involving vertex $\ell$ (and not $\ell+1$) before mutation are moved to vertex $\ell+1$, and}
  b_{i,\ell} = &
    \begin{cases}
          a_{i, \ell} a_{\ell, \ell+1} - a_{i,\ell+1}  & (\bfE) \\
          a_{i ,\ell+1} - a_{i,\ell} a_{\ell,\ell+1}   & (\bfM) \\
          a_{i,\ell+1} + a_{i,\ell} a_{\ell,\ell+1}    & (\bfX) \\           
    \end{cases} \qquad \text{ for } i < \ell \\
  b_{\ell,\ell+1} = &
    \begin{cases}
        a_{\ell,\ell+1} & (\bfE) \\
       -a_{\ell,\ell+1} & (\bfM) \text{ or } (\bfX) \\
    \end{cases} \\
  b_{\ell,j} = & 
    \begin{cases}
          a_{\ell,j} a_{\ell,\ell+1} - a_{\ell+1,j}   & (\bfE) \\
          a_{\ell+1,j} - a_{\ell,j} a_{\ell,\ell+1}   & (\bfM) \\
          a_{\ell+1,j} + a_{\ell,j} a_{\ell,\ell+1}   & (\bfX) \\           
    \end{cases} \qquad \text{ for } \ell+1 < j,
\end{align*}
according to whether the left mutation $\lambda_l$ is of type $(\bfE)$, $(\bfM)$ or $(\bfX)$ (see Section~\ref{Section.Exceptional_sequences}).
\end{proposition}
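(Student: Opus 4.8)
The plan is to reduce the statement to a computation with the bilinear Euler form on the Grothendieck group $K_0(\cH)\cong K_0(\cD)$. The key preliminary observation is that the quiver of an exceptional sequence records nothing but this form evaluated on the classes of its members: for $X,Y\in\cH$ one has $\langle[X],[Y]\rangle=\dim_\bbK\Hom_\cH(X,Y)-\dim_\bbK\Ext^1_\cH(X,Y)$ because $\cH$ is hereditary, and for an exceptional pair $(E_i,E_j)$ one of these two dimensions vanishes (see Definition~\ref{definition.quiver_except}). Hence the signed number of arrows from $i$ to $j$ in $Q_\varepsilon$ is exactly $\langle[E_i],[E_j]\rangle=a_{i,j}$ for $i<j$, so $Q_\varepsilon$ depends only on the tuple $([E_1],\dots,[E_n])$ of classes, and likewise $Q_{\lambda_\ell\varepsilon}$; the whole proposition becomes linear algebra.

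Next I would record how the classes change. By the definition of $\lambda_\ell$ (Section~\ref{Section.Exceptional_sequences}) the mutated sequence is $(E_1,\dots,E_{\ell-1},L,E_\ell,E_{\ell+2},\dots,E_n)$ with $L:=L_{E_\ell}E_{\ell+1}$, so the only classes that change are those in positions $\ell$ and $\ell+1$, which become $[L]$ and $[E_\ell]$; by the previous paragraph $b_{i,j}$ is the Euler pairing of the classes occupying positions $i$ and $j$ of this sequence. The first and second blocks of equations then follow at once: $b_{i,j}=a_{i,j}$ for $i,j\notin\{\ell,\ell+1\}$, and for pairs meeting position $\ell+1$ but not $\ell$ one gets $b_{i,\ell+1}=\langle[E_i],[E_\ell]\rangle=a_{i,\ell}$ for $i<\ell$ and $b_{\ell+1,j}=\langle[E_\ell],[E_j]\rangle=a_{\ell,j}$ for $\ell+1<j$.

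For the remaining block I would express $[L]$ in terms of $[E_\ell]$ and $[E_{\ell+1}]$ by reading it off the short exact sequence defining $L$, which is of one of the three shapes $(\bfE)$, $(\bfM)$, $(\bfX)$ from Section~\ref{Section.Exceptional_sequences}, with $E_\ell$-term $E_\ell^0$. Here one must note that the multiplicity of $E_\ell$ in $E_\ell^0$ equals $\dim_\bbK\Hom_\cH(E_\ell,E_{\ell+1})$ in the cases $(\bfE)$ and $(\bfM)$ — in which $\Ext^1_\cH(E_\ell,E_{\ell+1})=0$, so this multiplicity is $a_{\ell,\ell+1}$ — whereas in case $(\bfX)$ one has $\Hom_\cH(E_\ell,E_{\ell+1})=0$ and the multiplicity is $\dim_\bbK\Ext^1_\cH(E_\ell,E_{\ell+1})$. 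Substituting the resulting value of $[L]$ into $b_{i,\ell}=\langle[E_i],[L]\rangle$, $b_{\ell,\ell+1}=\langle[L],[E_\ell]\rangle$ and $b_{\ell,j}=\langle[L],[E_j]\rangle$ and expanding bilinearly, using $\langle[E_{\ell+1}],[E_\ell]\rangle=0$ (since $(E_\ell,E_{\ell+1})$ is an exceptional pair) and $\langle[E_\ell],[E_\ell]\rangle=1$ (since $E_\ell$ is exceptional), yields the asserted formulas case by case.

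I expect the only real obstacle to be the bookkeeping in this last step: getting the direction of the defining exact sequence right in each of the three cases, and keeping straight that the $E_\ell^0$-term is governed by $\Hom$ in the cases $(\bfE)$, $(\bfM)$ but by $\Ext^1$ in case $(\bfX)$ (so that $[E_\ell^0]$ is $a_{\ell,\ell+1}[E_\ell]$ in the first two cases and $-a_{\ell,\ell+1}[E_\ell]$ in the third). I would also point out once, for rigour, that every pair appearing in the Euler-form computations above is itself exceptional, so that the identification of ``signed arrow count'' with ``Euler pairing'' is legitimate throughout; this is automatic since $\lambda_\ell$ carries exceptional sequences to exceptional sequences.
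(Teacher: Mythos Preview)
Your proposal is correct and is essentially the same argument as the paper's: the paper applies $\Hom(E_i,-)$, $\Hom(L,-)$, and $\Hom(-,E_j)$ to the short exact sequence defining $L$ and reads off the alternating sum of dimensions from the resulting long exact sequences, which is precisely the additivity of the Euler form on $K_0(\cH)$ that you invoke. Your packaging via $\langle-,-\rangle$ is a mild abstraction of the same computation, and, like the paper, you treat the three cases by substituting the appropriate expression for $[L]$.
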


\begin{proof}
The first two equations follow immediately from the definition of the left mutation, since the object between which we calculate the $\Hom$- and $\Ext$-spaces do not change.

The remaining claims are checked case by case. We suppose that the left mutation $L:= L_{E_{\ell}} E_{\ell + 1}$ is of type $(\bfE)$, the other types can be treated similarly.

In this case we have a short exact sequence $L \into E_{\ell}^{a_{\ell,\ell+1}} \onto E_{\ell+1}$ in $\cohX$. Applying $\Hom(E_i, -)$ to this sequence we obtain the following exact sequence
\begin{align*}
  \Hom(E_i,L) \into \Hom(E_i ,E_{\ell}^{a_{\ell,\ell+1}}) \to \Hom(E_i,E_{\ell+1}) \to \\
   \Ext^1(E_i,L) \to \Ext^1(E_i,E_{\ell}^{a_{\ell,\ell+1}}) \onto \Ext^1(E_i,E_{\ell+1})
\end{align*}
which gives the equation $b_{i, \ell} = a_{i,\ell} a_{\ell,\ell+1} - a_{i,\ell+1}$ for $i < \ell$. The remaining formulas are obtained similarly by applying $\Hom(L, -)$ and $\Hom(-, E_j)$ to the short exact sequence above.
\end{proof}

Let $ \rho_l\varepsilon$ be the right mutation of $\varepsilon$ at vertex $l$. The rule to obtain the quiver $Q_{ \rho_l\varepsilon}$ from $Q_{\varepsilon}$ can be computed in a dual manner.

From the mutation rule above, we note that we do not need to know the exceptional sequence $\varepsilon$ in order to perform left or right mutations on $Q_\varepsilon$. However, we do need to know if the mutation is of type $(\bfE)$, $(\bfM)$ or $(\bfX)$, or a transposition.

\begin{proposition}\label{X.mutation} Let $Q_\varepsilon$ be the quiver of $\varepsilon$.
\begin{enumerate}
\item Assume that $a_{\ell,\ell+1} = 0$ for some vertex $1\le \ell \le n$. Then the left (resp.\ right) mutation at vertex $\ell$ is a transposition, and we can obtain $Q_{\lambda_{\ell} \varepsilon}$ (resp.\ $Q_{\rho_{\ell} \varepsilon}$) by using the left (resp.\ right) mutation rule above.
\item Assume that $a_{\ell,\ell+1}<0$ for some vertex $1\le \ell \le n$. Then the left (resp.\ right) mutation at vertex $\ell$ is of type $(\bfX)$, and we can obtain $Q_{\lambda_{\ell} \varepsilon}$ (resp.\ $Q_{\rho_{\ell} \varepsilon}$) by using the left (resp.\ right) mutation rule above.
\item Assume that $a_{\ell,\ell+1}>0$ for some vertex $1\le \ell \le n$. Then the left (resp.\ right) mutation at vertex $\ell$ is either of type $(\bfE)$ or of type $(\bfM)$. Hence we cannot obtain $Q_{\lambda_{\ell} \varepsilon}$ (resp.\ $Q_{\rho_{\ell} \varepsilon}$) by using the left (resp.\ right) mutation rule above without further information.
\end{enumerate}
\end{proposition}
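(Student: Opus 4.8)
The plan is to read off the type of the mutation $\lambda_{\ell}$ (resp.\ $\rho_{\ell}$) directly from the sign of $a_{\ell,\ell+1}$, using only two facts from Section~\ref{Section.Exceptional_sequences}: for an exceptional pair at least one of $\Hom_{\cH}(E_{\ell},E_{\ell+1})$ and $\Ext^{1}_{\cH}(E_{\ell},E_{\ell+1})$ is zero, and the object $\cE^{0}$ in the approximation defining the mutation is concentrated in a single shift. Once this dictionary is established, parts (a) and (b) are immediate, and part (c) reduces to comparing the $(\bfE)$- and $(\bfM)$-branches of Proposition~\ref{left.mutation}.

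First I would set up the dictionary. Put $h=\dim_{\bbK}\Hom_{\cH}(E_{\ell},E_{\ell+1})$ and $e=\dim_{\bbK}\Ext^{1}_{\cH}(E_{\ell},E_{\ell+1})$, so that $a_{\ell,\ell+1}=h-e$ with $h=0$ or $e=0$. The minimal right $\add\{E_{\ell}[i]\mid i\in\bbZ\}$-approximation of $E_{\ell+1}$ is then a map out of $E_{\ell}^{h}$ if $h\neq 0$, out of $E_{\ell}^{e}[-1]$ if $e\neq 0$, and out of $0$ if $h=e=0$. In the last case $L_{E_{\ell}}E_{\ell+1}\simeq E_{\ell+1}$, the mutation is a transposition, and this happens exactly when $a_{\ell,\ell+1}=0$. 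If $e\neq 0$, i.e.\ $a_{\ell,\ell+1}<0$, then $\cE^{0}\in\cH[-1]$, and rotating the defining triangle (and using that $L_{E_{\ell}}E_{\ell+1}$ is an object of $\cH$) yields the short exact sequence $E_{\ell+1}\into L_{E_{\ell}}E_{\ell+1}\onto E_{\ell}^{e}$, which is case $(\bfX)$. If $h\neq 0$, i.e.\ $a_{\ell,\ell+1}>0$, then $\cE^{0}=E_{\ell}^{h}\in\cH$ and the approximation is a morphism of $\cH$; its cone is, by Section~\ref{Section.Exceptional_sequences}, up to translation an object of $\cH$, hence concentrated in a single cohomological degree, and since the cone of a map between objects of $\cH$ has cohomology only in degrees $-1$ and $0$, the approximation must be a monomorphism (case $(\bfM)$) or an epimorphism (case $(\bfE)$), but never an extension.

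With the dictionary in hand, part (b) is immediate: $a_{\ell,\ell+1}<0$ forces type $(\bfX)$, so the $(\bfX)$-branch of Proposition~\ref{left.mutation} computes $Q_{\lambda_{\ell}\varepsilon}$, and the right-mutation case is dual. For part (a), $a_{\ell,\ell+1}=0$ gives $L_{E_{\ell}}E_{\ell+1}=E_{\ell+1}$, so $\lambda_{\ell}\varepsilon$ is obtained from $\varepsilon$ by interchanging the objects in positions $\ell$ and $\ell+1$; substituting $a_{\ell,\ell+1}=0$ into the $(\bfM)$-branch (equivalently the $(\bfX)$-branch) of Proposition~\ref{left.mutation} collapses it to precisely this relabelling of vertices $\ell$ and $\ell+1$, so the rule still applies, and dually for $\rho_{\ell}$. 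For part (c), $a_{\ell,\ell+1}>0$ leaves only types $(\bfE)$ and $(\bfM)$; the $(\bfE)$- and $(\bfM)$-branches of Proposition~\ref{left.mutation} give quivers that differ precisely by reversing every arrow incident to the mutated vertex $\ell$, hence are genuinely different since that vertex always carries at least the arrow to $\ell+1$. As both types really occur for exceptional sequences with the same quiver $Q_{\varepsilon}$ — over $\bbK Q$ with $Q$ of type $A_{2}$, writing $M$ for the length-two indecomposable with top $S$ and socle $S'$, the pair $(M,S)$ is of type $(\bfE)$ and $(S',M)$ of type $(\bfM)$, both have a single arrow from the first vertex to the second, while $\lambda_{1}(M,S)=(S',M)$ and $\lambda_{1}(S',M)=(S,S')$ have single arrows of opposite orientation — the quiver $Q_{\varepsilon}$ alone cannot determine $Q_{\lambda_{\ell}\varepsilon}$, which is exactly what makes the extra data (ranks and $\Hom$-dimensions) of Proposition~\ref{E_M.mutation} necessary.

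The routine parts are the bookkeeping for (a) and the appeal to Proposition~\ref{left.mutation} for (b); the step that needs care is the trichotomy in the dictionary — ruling out a transposition when $\Hom_{\cH}(E_{\ell},E_{\ell+1})\neq 0$ and type $(\bfX)$ when $\Ext^{1}_{\cH}(E_{\ell},E_{\ell+1})=0$ — which rests on the indecomposability of $L_{E_{\ell}}E_{\ell+1}$ together with the hereditariness of $\cH$, and, for the negative half of (c), the construction of the examples above.
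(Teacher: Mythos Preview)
Your proof is correct and follows essentially the same approach as the paper: reading the type of the mutation off the sign of $a_{\ell,\ell+1}$ via the fact that at most one of $\Hom_{\cH}(E_{\ell},E_{\ell+1})$ and $\Ext^{1}_{\cH}(E_{\ell},E_{\ell+1})$ is nonzero. Your treatment of part~(c) is in fact more thorough than the paper's, which simply records that types $(\bfE)$ and $(\bfM)$ are the remaining possibilities, whereas you supply the explicit $A_{2}$ examples showing that both types genuinely occur with the same quiver $Q_{\varepsilon}$ but produce different $Q_{\lambda_{\ell}\varepsilon}$.
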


\begin{proof}
Observe that the mutations are transpositions if and only if $\Hom(E_{\ell}, E_{\ell+1}) = \Ext^1(E_{\ell}, E_{\ell+1}) = 0$, and this is the case if and only if $a_{\ell, \ell+1} = 0$. This proves (a).

Similarly the left (resp.\ right) mutation of type $(\bfX)$ occurs if and only if $\Ext^1(E_l,E_{l+1})\ne 0$. This last condition is equivalent to $a_{l,l+1}<0$, thus we have (b).

In these two cases application of the rule is then straightforward.

In the remaining cases the mutations are of type $(\bfE)$ or $(\bfM)$.
\end{proof}

Now we assume $\cH = \cohX$ for some weighted projective line $\bbX$.

\begin{proposition} \label{E_M.mutation}
Let $\rank(E_1), \ldots, \rank(E_n)$ be the ranks of the objects in $\varepsilon$. If $\rank(E_{\ell}) > 0$ or $\rank(E_{\ell+1}) > 0$, then we can perform the left (resp.\ right) mutation at vertex $\ell$ in $Q_\varepsilon$.

Moreover, we can calculate the ranks of the objects after mutation from the given ranks.
\end{proposition}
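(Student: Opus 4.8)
The plan is to combine Proposition~\ref{X.mutation} with the additivity of the rank on short exact sequences in $\cohX$: the only ambiguity left by Proposition~\ref{X.mutation}, namely distinguishing type $(\bfE)$ from type $(\bfM)$ when $a_{\ell,\ell+1}>0$, is what must be resolved, and I expect it to be resolvable from the ranks together with the hypothesis. First I would dispose of the case $a_{\ell,\ell+1}\le 0$: by Proposition~\ref{X.mutation}(a),(b) the left mutation at $\ell$ is then a transposition or of type $(\bfX)$, so $Q_{\lambda_\ell\varepsilon}$ is computed from $Q_\varepsilon$ by the left mutation rule, and writing $L:=L_{E_\ell}E_{\ell+1}$ for the object of $\lambda_\ell\varepsilon$ in position $\ell$, the defining exact sequence is $L\simeq E_{\ell+1}$ (transposition) or $0\to E_{\ell+1}\to L\to E_\ell^{-a_{\ell,\ell+1}}\to 0$ (type $(\bfX)$), so by additivity $\rank L$ equals $\rank E_{\ell+1}$, resp.\ $\rank E_{\ell+1}-a_{\ell,\ell+1}\rank E_\ell$; the remaining objects of $\lambda_\ell\varepsilon$ are $E_1,\dots,E_{\ell-1},E_\ell,E_{\ell+2},\dots,E_n$, of known rank.

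Now let $a:=a_{\ell,\ell+1}>0$, so that $\dim\Hom(E_\ell,E_{\ell+1})=a$ and the mutation is of type $(\bfE)$ or $(\bfM)$. In type $(\bfE)$ there is a short exact sequence $0\to L\to E_\ell^{a}\to E_{\ell+1}\to 0$, giving $\rank L=a\rank E_\ell-\rank E_{\ell+1}$; in type $(\bfM)$ there is a short exact sequence $0\to E_\ell^{a}\to E_{\ell+1}\to L\to 0$, giving $\rank L=\rank E_{\ell+1}-a\rank E_\ell$. Since $L$ is exceptional it is indecomposable and nonzero, hence either a vector bundle or a torsion sheaf, so $\rank L\ge 0$. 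Therefore, if $a\rank E_\ell>\rank E_{\ell+1}$ the mutation must be of type $(\bfE)$ (type $(\bfM)$ would force $\rank L<0$), and if $a\rank E_\ell<\rank E_{\ell+1}$ it must be of type $(\bfM)$; in either case $\rank L$ is then the corresponding expression above, and the new quiver is supplied by Proposition~\ref{left.mutation}.

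The remaining boundary subcase $a\rank E_\ell=\rank E_{\ell+1}$, in which $\rank L=0$ under both types so that rank additivity by itself is inconclusive, is the place where I expect the real work to lie. Here the hypothesis enters: since $a>0$ and $\rank E_\ell>0$ or $\rank E_{\ell+1}>0$, necessarily $\rank E_\ell>0$, i.e.\ $E_\ell\in\vect\bbX$, whence $E_\ell^{a}\in\vect\bbX$ as well. A vector bundle has no simple subobject, hence no nonzero torsion subsheaf; but in type $(\bfE)$ the object $L$ would be a rank-$0$, and nonzero (being exceptional), subobject of $E_\ell^{a}$, i.e.\ a nonzero torsion subsheaf of a vector bundle --- a contradiction. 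So the mutation is of type $(\bfM)$ and $\rank L=0$. In every case the mutation type is thereby determined, so Proposition~\ref{left.mutation} computes $Q_{\lambda_\ell\varepsilon}$, and we have found $\rank L$ together with all other ranks in $\lambda_\ell\varepsilon$. The argument for right mutation is entirely dual, exchanging the roles of $E_\ell$ and $E_{\ell+1}$ (and of sub- and quotient objects) throughout.
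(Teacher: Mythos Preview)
Your proof is correct and follows the same approach as the paper's: reduce via Proposition~\ref{X.mutation} to the case $a_{\ell,\ell+1}>0$, then use additivity of rank on the defining short exact sequences to distinguish $(\bfE)$ from $(\bfM)$. In fact your treatment is more complete: the paper simply asserts the criterion ``type $(\bfE)$ iff $a_{\ell,\ell+1}\rank E_\ell>\rank E_{\ell+1}$, type $(\bfM)$ iff $a_{\ell,\ell+1}\rank E_\ell\le\rank E_{\ell+1}$'' without isolating or justifying the boundary case $a_{\ell,\ell+1}\rank E_\ell=\rank E_{\ell+1}$, whereas you spell out why the hypothesis forces $E_\ell\in\vect\bbX$ there and hence rules out a torsion kernel.
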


\begin{proof}
By Proposition~\ref{X.mutation} it only remains to distinguish the types $(\bfE)$ and $(\bfM)$ when $a_{\ell, \ell+1} > 0$. In this setup the left mutation is of type $(\bfE)$ (resp.\ $(\bfM)$) if and only if $\rank(E_{\ell}) a_{\ell,\ell+1} > \rank(E_{\ell+1})$ (resp.\ $\rank(E_{\ell}) a_{\ell,\ell+1} \le \rank(E_{\ell+1})$). Finally, the rank of the mutated object can be calculated by using the short exact sequence defining the left (resp. right) mutation.
\end{proof}

Observe that if $\rank(E_{\ell}) = \rank(E_{\ell+1}) = 0$, we cannot distinguish between the cases $(\bfE)$ or $(\bfM)$ in general. This is due to the fact that we cannot determine if the approximation defining the left (resp.\ right) mutation is an epimorphism or a monomorphism. Thus we cannot apply the mutation rule for quivers of exceptional sequences.

\section{Recovering the tilting object}\label{section.recoverT}

In this section we focus on the following problem: Let $T$ be a cluster-tilting object in $\cC:=\cC_\bbX$, the cluster category of $\cohX$, and denote by $\Gamma=\End_\cC(T)$ its endomorphism ring. Suppose that one is given $Q_\Gamma$, the quiver of $\Gamma$ equipped with the natural grading, and the ranks of the indecomposable summands of $T$. Can we recover $T$ and $\Gamma$?

We give the following positive answer to this question:

\begin{theorem} \label{theorem.reconstruct_from_Q+rk}
Let $T$ be a cluster-tilting object in $\cC:=\cC_\bbX$, the cluster category of $\cohX$, and denote by $\Gamma=\End_\cC(T)$ its endomorphism ring.
 
Assume that we are given $Q_\Gamma$, the quiver of $\Gamma$ equipped with the natural grading, and the ranks of the indecomposable summands of $T$. Then $T$ and $\Gamma$ are determined uniquely up to twist with a line bundle and choice of a parameter sequence $\lambdab$.

Moreover they can be determined algorithmically.
\end{theorem}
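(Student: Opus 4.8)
The plan is to translate the data $(Q_\Gamma,\ \text{ranks})$ into the quiver of an exceptional sequence in $\cohX$, to simplify that quiver by an explicit sequence of the mutations of Section~\ref{section.mut_excep_sequences} until it becomes the quiver of a standard exceptional sequence, and then to read the answer back off the recorded mutations.

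\textbf{Step 1: recover the combinatorial invariants of the underlying tilting sheaf.} Since $\cH_*=\cohX$ has no nonzero projectives, $\cohX$ is a fundamental domain for $\cC$, and $T$ comes from a tilting sheaf $\bar T\in\cohX$ via \cite[Proposition~3.4]{BMRRT}. The degree-zero arrows of $Q_\Gamma$ are exactly the arrows of the Gabriel quiver of $\Lambda=\End_{\cohX}(\bar T)$ and the degree-one arrows are exactly its minimal relations, so from $Q_\Gamma$ we read off the bound quiver of $\Lambda$, hence also its Euler form and its Cartan matrix $C_\Lambda$. Feeding this together with the given ranks into Proposition~\ref{prop.uniqueness_sink-source}(b) returns the sink-source distribution of $T$; when $\bbX$ is tubular one first applies a graded mutation to reach a cluster-tilting object with a rank-$0$ summand (tracking ranks by rank additivity, Theorem~\ref{rank-is-additive}) as required there, and transfers the conclusion back by graded mutation.

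\textbf{Step 2: build an exceptional sequence.} Under the derived equivalence $\bar T\otimes_{\Lambda}^{\L}-\colon D^b(\mod\Lambda)\to D^b(\cohX)$ the simple $\Lambda$-modules $S_1,\dots,S_n$ are sent, by Proposition~\ref{sink.source} and the absence of injectives in $\cohX$, into $\cohX$ when the corresponding summand is a source and into $\cohX[1]$ when it is a sink; shifting the latter ones back produces, in a suitable order, an exceptional sequence $\varepsilon=(E_1,\dots,E_n)$ in $\cohX$. Its quiver $Q_\varepsilon$ of Definition~\ref{definition.quiver_except} has $a_{i,j}$ equal to $\pm(\delta_{ij}-n_{ij}+r_{ij})$, where $n_{ij}$ and $r_{ij}$ are read off the degree-zero and degree-one parts of $Q_\Gamma$ and the sign is dictated by the two shifts; and each $\rank E_i$ is computed from the given ranks $\rank T_j$ via $C_\Lambda^{-1}$ and the recorded shift. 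Thus $Q_\varepsilon$ and all the $\rank E_i$ are effectively determined by the input.

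\textbf{Step 3: mutate to a standard sequence and read back.} Now one runs an explicit recipe of left/right mutations on the pair $(Q_\varepsilon,(\rank E_i)_i)$. At each step Proposition~\ref{X.mutation} disposes of the transposition and type-$(\bfX)$ cases from $Q_\varepsilon$ alone, and Proposition~\ref{E_M.mutation} carries out the mutation and updates the ranks whenever at least one of the two objects being mutated has positive rank. One designs the recipe so that it only ever mutates at such positions, steering $\varepsilon$ towards the exceptional sequence underlying the squid tilting sheaf $T_{\sq}$ or the canonical sheaf $T_{\can}$: the rank-$0$ part is handled using the known type-$A$ combinatorics of exceptional sequences inside a tube, and the positive-rank part is brought into standard form by an explicit reduction in the spirit of Lemma~\ref{from.simples.to.injectives.lemma}. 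Since the target sequence is completely explicit and every mutation has been recorded, the realised braid-group element is known, and applying its inverse recovers $\varepsilon$, hence $\bar T$ and $\Gamma$. The remaining indeterminacy is exactly a twist by a line bundle (the $\bfL(\bfp)$-action, which fixes $Q_\Gamma$) and the parameter sequence $\lambdab$, which enters only through the scalars in the relations of $\Lambda$ and so is invisible in $Q_\Gamma$; every step above is effective, which gives the ``algorithmically'' clause.

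\textbf{Main obstacle.} The hard part is Step~3: one must guarantee that the mutation recipe never stalls at a position where both objects have rank $0$ --- exactly where the $(\bfE)$/$(\bfM)$ ambiguity of Proposition~\ref{E_M.mutation} cannot be resolved --- and that it terminates at the chosen standard sequence. This requires combining the transitivity of the braid-group action (Theorem~\ref{Braid.group.action.transitive}) with a careful analysis of how rank changes under the $(\bfE)$, $(\bfM)$ and $(\bfX)$ short exact sequences, so as to keep a positive-rank object adjacent to the vertex to be mutated throughout the reduction and to peel off the torsion summands in a controlled order.
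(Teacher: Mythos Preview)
Your high-level strategy --- pass to an exceptional sequence in $\cohX$, mutate to a standard one while tracking the quiver and the ranks, and then invert --- is exactly the paper's. However, two points deserve comment.

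\textbf{Step~2 is more complicated than necessary.} The paper does not pass through the simple $\Lambda$-modules at all. It takes the tilting sheaf $\bar T=\bigoplus T_i$ itself, orders its indecomposable summands into a tilting sequence $(T_1,\dots,T_n)$ (unique up to transpositions), and observes that the quiver $Q_{(T_1,\dots,T_n)}$ of Definition~\ref{definition.quiver_except} is read off directly from the Cartan matrix of $\Lambda$, which is immediate from the graded $Q_\Gamma$. The ranks are the given ones. Your detour through the $T\otimes_\Lambda^{\L} S_i$ works, but it forces you to recompute ranks via $C_\Lambda^{-1}$ and to keep track of extra shifts, for no gain.

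\textbf{Step~3 is the genuine gap.} You say ``one designs the recipe'' and then, in the final paragraph, you acknowledge that this is exactly the hard part you have not done. The paper's content here is all of Subsection~\ref{back.to.squid.section}: four explicit algorithms. First Meltzer's rank reduction (Theorem~\ref{rank.reduction}) is iterated to produce a line bundle, which is moved to position~$1$ and declared to be $\cO(\vc)$. Then one works in the perpendicular category $\lperp{\cO}(\vc)\simeq\mod\Lambda_{\vc}$ (Theorem~\ref{Oc.left.perp}) and performs $\Hom$-reduction among the remaining vector bundles; the nontrivial Proposition~\ref{prop.m_=_2} shows this terminates with exactly two vector bundles $\cO(\vc),\cO(2\vc)$. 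Next an $\Ext$-reduction (Algorithm~\ref{alg.step_3}) among the torsion sheaves yields a tilting sequence. Finally, the remaining sources among the torsion sheaves are removed by \emph{tilting} mutation (Algorithm~\ref{alg.step_4}), not by exceptional-sequence mutation. The crucial bookkeeping claim is then: in the first three algorithms one never mutates two rank-$0$ objects past each other, so Proposition~\ref{E_M.mutation} always applies; and in the fourth one uses tilting mutation in sources, where the $(\bfE)/(\bfM)$ ambiguity does not arise because the approximation is known to be a monomorphism. Your sketch (``type-$A$ combinatorics inside a tube'', ``in the spirit of Lemma~\ref{from.simples.to.injectives.lemma}'') does not supply any of this, and in particular does not establish that the recipe avoids the rank-$0$/rank-$0$ configurations where Proposition~\ref{E_M.mutation} fails.
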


\begin{remarks}\mbox{}
\begin{enumerate} 
\item The uniqueness claimed by the theorem is ``as good as possible'', that is, if we twist by a line bundle or change the parameter sequence then neither the graded quiver $Q_{\Gamma}$ nor the ranks of the summands of $T$ change (see \cite[Section~4.4]{Meltzer2}).
\item Choice of a parameter sequence $\lambdab$ is more than choice of a weighted projective line: If several parameters have the same weight we may interchange them in the parameter sequence without changing the weighted projective line, but this will possibly give different choices for $T$ (See Example~\ref{ex.changing_arms} and \cite[Section~2]{LM3}).
\end{enumerate}
\end{remarks}

Our strategy for the proof of Theorem~\ref{theorem.reconstruct_from_Q+rk} is as follows: we provide a method to transform any cluster-tilting object $T$ into a well-known cluster-tilting object in $\cC$, the squid (see Definition~\ref{Squid}). Since the quiver of the cluster-tilting objects in $\cC$ is not known to be connected in the wild case (and even if it is, there is no algorithm to find the connecting mutation sequence), we cannot use (cluster-tilting) mutation directly. We will however see that we can use mutation of exceptional sequences and their quivers (see Definition~\ref{definition.quiver_except}). Then we will use the fact that the cluster-tilting object of the squid is unique up to a twist with line bundles and interchanging arms of the same length.

This section is divided as follows: In Subsection~\ref{subsect.prep_for_algorithm} we prove some preliminary results, which will be used in the actual algorithm given in Subsection~\ref{back.to.squid.section}. There we explain how to use mutation of exceptional sequences to transform any given tilting sequence to the tilting sequence of the squid. Finally, in Subsection~\ref{recover.T.section}, we complete the proof of Theorem~\ref{theorem.reconstruct_from_Q+rk}.

\subsection{Preliminaries} \label{subsect.prep_for_algorithm}

First, in Subsection~\ref{subsection.rank_reduction}, we recall a useful theorem from \cite{Meltzer} which provides a rank reduction technique. Then, in Subsection~\ref{subsection.perp_cats}, we provide a result on perpendicular categories of line bundles. Finally, in Subsection~\ref{subsub.rem_hom}, we show how to remove $\Hom$-arrows in the setup of Subsection~\ref{subsection.perp_cats}.

\subsubsection{Rank reduction} \label{subsection.rank_reduction}
We recall a theorem of Meltzer \cite{Meltzer}, which is one of the key ingredients in our method to recover the tilting object from certain combinatorial data.

For an exceptional sequence  $\varepsilon=(E_1,\ldots,E_n)$ in $\cohX$ we define
\[
 \| \varepsilon \| = \left(\rank(E_{\pi(1)}) ,
   \ldots,\rank(E_{\pi(n)}) \right),
\] 
where $\pi$ is a permutation of $1,\ldots,n$ such that $\rank(E_{\pi(1)}) \ge \cdots \ge \rank(E_{\pi(n)})$. For sequences $e=(e_1,\ldots,e_n)$ and $f=(f_1,\ldots,f_n)$ in $\bbN_0^n$, we write $e \le f$ if $e_i \le f_i$ for all $1\le i \le n$, and $e<f$ provided that $e\le f$ and $e\ne f$.

\begin{theorem}[{Rank reduction -- \cite[Propositon 3.2]{Meltzer}}] \label{rank.reduction}
 Let $\varepsilon = (E_1, \ldots, E_n)$ be a complete exceptional sequence in $\cohX$ such that there exist integers $a < b$ with the properties:
\begin{enumerate}
\item $\rank E_a \ge 2$ and $\rank E_b \ge 2$,
\item $\Hom (E_a,E_b) \neq 0$ and $\Hom (E_i,E_j)=0$ for $a \le i < j \le b$ with $(i,j) \neq (a,b)$.
\end{enumerate}
For
\[ g = 
  \begin{cases}
    \lambda_{b-1}\lambda_{b-2} \cdots \lambda_{a} & \text{ if any nonzero map } E_a \to E_b \text{ is a monomorphism} \\
    \rho_{a}\rho_{a+1} \cdots \rho_{b-1} & \text{ if any nonzero map } E_a \to E_b \text{ is an epimorphism}
  \end{cases}
\]
we have $\| g\varepsilon \| < \|\varepsilon \|$.
\end{theorem}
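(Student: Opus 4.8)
The plan is to track how the rank multiset $\|\varepsilon\|$ changes under the sequence $g$ of mutations, using the short exact sequences that define the individual mutations together with the fact (Lemma~\ref{Hom.transposition}) that, under the stated hypotheses, all but one of the mutations in $g$ are transpositions. I will treat the monomorphism case $g = \lambda_{b-1}\cdots\lambda_a$; the epimorphism case is dual. First I would invoke Lemma~\ref{Hom.transposition}: since $\Hom(E_a,E_b)\neq 0$, $\Hom(E_i,E_j)=0$ for all other pairs with $a\le i<j\le b$, and the nonzero map $E_a\to E_b$ is mono, we are in case (a) of that lemma, so the relevant mutations moving $E_a$ past the intermediate objects are transpositions. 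Concretely, $\lambda_{b-1}\cdots\lambda_a$ leaves the objects $E_{a+1},\ldots,E_{b-1}$ unchanged (up to reordering within the sequence) and replaces $E_b$ by $L:=L_{E_a}E_b$, while $E_a$ itself is unchanged; the only object whose isomorphism class changes is $E_b$, which becomes $L$.

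Next I would record the rank bookkeeping. The one non-transposition mutation is $\lambda_a$ applied in the position where $E_a$ sits immediately before $E_b$, and it is of type $(\bfE)$ precisely because any nonzero $E_a\to E_b$ is a monomorphism; thus we have a short exact sequence
\[
 L \into E_a^{\,a_{a,b}} \onto E_b
\]
in $\cohX$, where $a_{a,b}=\dim\Hom(E_a,E_b)>0$. Additivity of rank on short exact sequences gives $\rank L = a_{a,b}\cdot\rank E_a - \rank E_b$. Since $E_a\to E_b$ is a monomorphism of vector bundles with $\rank E_a\ge 2$ and $\rank E_b\ge 2$, and since a monomorphism in $\vect\bbX$ is injective on ranks in the sense that $\rank E_a \le \rank E_b$ (the cokernel lies in $\cohX$ and has nonnegative rank), I would derive $0 \le \rank L < \rank E_b$: the upper bound because $\rank L = \rank(E_a^{a_{a,b}}) - \rank E_b$ and mono forces $\rank E_b \ge \rank E_a$, so if $a_{a,b}\ge 2$ one still needs the sharper estimate, while if $a_{a,b}=1$ then $L$ is literally the cokernel of a proper inclusion $E_a\hookrightarrow E_b$, whence $\rank L = \rank E_b - \rank E_a \le \rank E_b - 2 < \rank E_b$. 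The general-$a_{a,b}$ bound $\rank L < \rank E_b$ is the one place a short argument is needed; I expect it to follow from $\rank E_a \ge 2$ together with the constraint that the map $E_a^{a_{a,b}}\onto E_b$ is an epimorphism, forcing $a_{a,b}\rank E_a \ge \rank E_b + \rank E_a$ is too strong, so instead I would argue directly: $\rank L = a_{a,b}\rank E_a - \rank E_b$ and we want this $< \rank E_b$, i.e. $a_{a,b}\rank E_a < 2\rank E_b$; this is exactly the condition that the $(\bfE)$-type mutation produces an object of strictly smaller rank, which is built into Meltzer's rank-reduction setup and I would cite or re-derive it from the bundle inequalities.

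With the rank of the one changed object strictly smaller than before and all other ranks unchanged, I would finish by the combinatorial lemma on ordered multisets: if a multiset of nonnegative integers is modified by decreasing exactly one entry (here from $\rank E_b$ to $\rank L$, with $\rank L < \rank E_b$) and leaving the others fixed, then its decreasing rearrangement strictly decreases in the lexicographic order $<$ on $\bbN_0^n$. Hence $\|g\varepsilon\| < \|\varepsilon\|$. The main obstacle, as indicated, is the inequality $\rank L < \rank E_b$ in the case $a_{a,b} \ge 2$: it requires knowing that the epimorphism $E_a^{\,a_{a,b}}\onto E_b$ cannot be "too efficient", which one extracts from the classification of maps between exceptional bundles (or from the defining property of the $(\bfE)$-mutation together with $\rank E_a\ge 2$); everything else is routine bookkeeping with additivity of rank and Lemma~\ref{Hom.transposition}. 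The epimorphism case is handled by the dual argument, using right mutations $\rho_a\cdots\rho_{b-1}$, case (b) of Lemma~\ref{Hom.transposition}, and the $(\bfE)$-type right mutation short exact sequence, with the roles of $E_a$ and $E_b$ interchanged.
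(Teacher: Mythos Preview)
Your high-level plan is exactly what the paper sketches in the remarks following the theorem: use Lemma~\ref{Hom.transposition} to see that all mutations in $g$ except the last one ($\lambda_{b-1}$, resp.\ $\rho_a$) are transpositions, so only one object actually changes, namely $E_b\mapsto L:=L_{E_a}E_b$ in the monomorphism case. The paper itself does not prove the remaining rank inequality but refers to Meltzer.

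Your analysis of that one nontrivial mutation, however, has a genuine gap. You assert that the mutation is of type $(\bfE)$ ``precisely because any nonzero $E_a\to E_b$ is a monomorphism'', but the type is governed by whether the \emph{approximation} $E_a^{m}\to E_b$ (with $m=\dim\Hom(E_a,E_b)$) is an epimorphism or a monomorphism, not by the individual maps. For $m=1$ the approximation \emph{is} a single monomorphism, so the mutation is of type $(\bfM)$ and $L$ is the cokernel --- which is precisely what you compute, contradicting your own $(\bfE)$ label. More importantly, you do not establish the key inequality $\rank L<\rank E_b$. In type $(\bfM)$ it is immediate, since $\rank L=\rank E_b-m\,\rank E_a\le\rank E_b-2$. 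In type $(\bfE)$ one has $\rank L=m\,\rank E_a-\rank E_b$ and needs $m\,\rank E_a<2\,\rank E_b$; from $\rank E_a\le\rank E_b$ alone this already fails, e.g.\ for $m=3$ and $\rank E_a=\rank E_b$. You flag this as ``the main obstacle'' and propose to cite unspecified bundle inequalities, which is not an argument. (A minor point: the order on $\bbN_0^n$ used here is the componentwise product order, not the lexicographic one; your observation about decreasing one entry of a multiset remains valid for it, but should be phrased accordingly.)
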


\begin{remarks} \mbox{}
\begin{enumerate}
\item Observe that $\Ext^1(E_b,E_a)=0$, and thus by Lemma~\ref{lemma.HR}, either all nonzero morphisms $E_a \to E_b$ are monomorphisms, or all such morphisms are epimorphisms. Moreover, they are monomorphisms (resp.\ epimorphisms) if and only if $\rank(E_a) \le \rank(E_b)$ (resp.\ $\rank(E_a)> \rank(E_b)$).
\item The proof of the theorem uses Lemma~\ref{Hom.transposition} to show that all the left (resp.\ right) mutations except for $\lambda_{b-1}$ (resp.\ $\rho_a$) are transpositions. So we only apply  one ``real'' mutation in each case.
\end{enumerate}
\end{remarks}

\subsubsection{Perpendicular categories} \label{subsection.perp_cats}

We now recall a reduction technique that allows us to use the tools developed for exceptional sequences in hereditary Artin algebras in the case of $\cohX$.

We denote by $\cT$ the full subcategory of $\cohX$ of all objects $X$ satisfying $ \Hom(X,\cO(\vc))=0$. Observe that all finite length sheaves are contained in $\cT$. Further let $\cF = \{Y \in \vect \bbX | \Ext^1(Y,\cO)=0 \}$. Let $T=\oplus_{0\le\vx \le \vc} \, \cO(\vx)$ be the canonical tilting bundle in $\cohX$ and denote by $\Lambda=\End_{\cohX}(T)$ the corresponding canonical algebra.
 
Note that there are monomorphisms $\cO(\vx) \into \cO(\vc)$ for $0\le \vx \le \vc$ which induce monomorphisms $\Hom(X,\cO(\vx)) \into \Hom(X,\cO(\vc))$. Thus
\[ \cT = \{X \in \cohX | \Hom(X,T)=0\}.\]
Similarly, the monomorphisms $\cO \into \cO(\vx)$  induce epimorphisms $\Ext^1(Y,\cO)\onto \Ext^1(Y,\cO(\vx))$ for $0 < \vx \le \vc$ whenever $Y \in \vect \bbX$ and thus
\[ \cF =\{Y \in \vect \bbX | \Ext^1(Y,T)=0\} = \{Y \in \cohX | \Ext^1(Y,T)=0\}. \]
In particular, we have that $\cT\cap \cF=\{0\}.$

\begin{theorem}\label{Oc.left.perp}
The left perpendicular category 
\[\lperp{\cO}(\vc) = \{X \in \cohX ~|~\Hom(X,\cO(\vc)) = 0 = \Ext^1(X,\cO(\vc)) \}\] 
can be identified with the module category  $\mod \Lambda_{\vc}$ where $\Lambda_{\vc}$ is the hereditary algebra $\Lambda/e_{\vc} \Lambda e_{\vc}$. Under this identification, the line bundle $\cO(2\vc)$ and the torsion sheaves $S_{\vx}^{\vc}$  ($0< \vx < \vc$) form a complete system of indecomposable injective modules in $\mod \Lambda_{\vc}$, where $S_{\vx}^{\vc}$ is the cokernel of the (up to scalars unique) monomorphism $\cO(\vx) \into \cO(\vc)$.
\end{theorem}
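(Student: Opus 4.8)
The plan is to identify $\lperp{\cO}(\vc)$ with a module category using the general theory of perpendicular categories of exceptional objects (Geigle--Lenzing), and then to exhibit the claimed injectives by a direct computation. First I would recall that since $\cO(\vc)$ is an exceptional object in the hereditary category $\cohX$, the right perpendicular category $\rperp{\cO}(\vc)$ and the left perpendicular category $\lperp{\cO}(\vc)$ are again hereditary with a tilting object, and the rank of their Grothendieck group drops by one. Next I would use the canonical tilting bundle $T = \oplus_{0 \le \vx \le \vc} \cO(\vx)$, whose endomorphism ring is the canonical algebra $\Lambda$. Since $\cO(\vc)$ is a summand of $T$, the summand $\cO(\vc)$ corresponds to a vertex $e_{\vc}$ of the quiver of $\Lambda$, and standard recollement/perpendicular arguments (cf. the treatment in Geigle--Lenzing and in \cite{Hubner1}) show that $\lperp{\cO}(\vc)$ is equivalent to the module category of the algebra $\Lambda_{\vc} = \Lambda/\Lambda e_{\vc}\Lambda$ obtained by deleting that vertex; note that because $\cO(\vc)$ is the ``end'' vertex of the canonical quiver, deleting it kills the relations and the quotient $\Lambda/\Lambda e_{\vc}\Lambda$ agrees with $\Lambda/e_{\vc}\Lambda e_{\vc}$ and is hereditary (it is a ``star'' path algebra). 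I would spell out this identification via the functor $\Hom_{\cohX}(T/\cO(\vc), -)$ restricted to $\lperp{\cO}(\vc)$.

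Having the equivalence $\lperp{\cO}(\vc) \simeq \mod \Lambda_{\vc}$, the second task is to locate the indecomposable injective $\Lambda_{\vc}$-modules inside $\cohX$. The injective $\Lambda_{\vc}$-modules correspond, under the derived equivalence, to $\nu_{\Lambda_{\vc}}$ applied to the projectives, equivalently to the ``relative injectives'' of the hereditary category $\lperp{\cO}(\vc)$; concretely they are obtained from the simple objects of $\lperp{\cO}(\vc)$ by the left-mutation procedure of Lemma~\ref{from.simples.to.injectives.lemma}. The simple objects of $\lperp{\cO}(\vc)$ are the structure sheaf $\cO$ together with the torsion sheaves $S_i^{(j)}$ supported at the $\lambda_i$ (those lying in the perpendicular category), so I would compute the corresponding injectives directly. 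The structure sheaf $\cO$ is the unique indecomposable projective-injective-free object whose injective envelope in $\lperp{\cO}(\vc)$ I claim is $\cO(2\vc)$: indeed $\cO(2\vc) \in \lperp{\cO}(\vc)$ since $\Hom(\cO(2\vc),\cO(\vc)) = 0 = \Ext^1(\cO(2\vc),\cO(\vc))$ by the structure of $\Hom$ and $\Ext$ between line bundles on $\bbX$ (the latter by Serre duality, $\Ext^1(\cO(2\vc),\cO(\vc)) \cong \D\Hom(\cO(\vc),\cO(2\vc+\vec\omega))$, and $2\vc + \vec\omega \not\ge 2\vc$), and one checks it has simple socle. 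For the torsion part, the monomorphism $\cO(\vx) \into \cO(\vc)$ (unique up to scalar) has cokernel a torsion sheaf $S_{\vx}^{\vc}$ of length $|\vc| - |\vx|$, and applying $\Hom(-,\cO(\vc))$ and $\Ext^1(-,\cO(\vc))$ to the short exact sequence $\cO(\vx) \into \cO(\vc) \onto S_{\vx}^{\vc}$ shows $S_{\vx}^{\vc} \in \lperp{\cO}(\vc)$; a length and socle count identifies these (for $0 < \vx < \vc$) as the remaining indecomposable injectives, and counting shows $\cO(2\vc)$ together with the $S_{\vx}^{\vc}$ exhaust all of them since their number equals $\rank K_0(\lperp{\cO}(\vc)) = \rank K_0 \bbX - 1$.

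The main obstacle I anticipate is the bookkeeping in the identification $\lperp{\cO}(\vc) \simeq \mod \Lambda_{\vc}$: one must be careful that $\Lambda/e_{\vc}\Lambda e_{\vc}$ really is the right algebra (as opposed to some other perpendicular reduction) and that the functor realizing the equivalence sends the torsion sheaves $S_i^{(j)}$ and the line bundle $\cO$ to the simple $\Lambda_{\vc}$-modules in the expected way; this is where one uses that $\cO(\vc)$ sits at a distinguished ``sink'' position of the canonical quiver so that no relations survive and the reduction stays hereditary. The computation of the injectives is then essentially a matter of tracking socles, lengths, and the known $\Hom$/$\Ext$ combinatorics of line bundles and torsion sheaves on $\bbX$, together with Serre duality $\Ext^1(X,Y) \cong \D\Hom(Y, \tau X)$. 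Once both pieces are in place the proof concludes by a dimension (rank of $K_0$) count confirming the list is complete.
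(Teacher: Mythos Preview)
Your proposal differs from the paper's argument in both halves, and the first half contains a genuine gap. You propose to realize the equivalence via the functor $\Hom_{\cohX}(T/\cO(\vc),-)$ restricted to $\lperp{\cO}(\vc)$, implicitly treating $T/\cO(\vc)=\bigoplus_{0\le\vx<\vc}\cO(\vx)$ as a tilting object of the perpendicular category. But $T/\cO(\vc)$ does not even lie in $\lperp{\cO}(\vc)$: for every $0\le\vx<\vc$ one has $\Hom(\cO(\vx),\cO(\vc))\cong S_{\vc-\vx}\ne 0$. So the functor you write down is not an equivalence onto $\mod\Lambda_{\vc}$, and the ``standard recollement/perpendicular'' citation does not bridge this: the Geigle--Lenzing type statement you are invoking identifies $\lperp{I_e}$ inside $\mod\Lambda$ with $\mod(\Lambda/\Lambda e\Lambda)$, not $\lperp{\cO}(\vc)$ inside $\cohX$. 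What is missing is precisely the passage between the abelian perpendicular in $\cohX$ and the one in $\mod\Lambda$, and this is where the paper does real work.

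The paper's route is to use the covariant derived equivalence $\D\R\Hom(-,T)\colon D^b(\cohX)\to D^b(\mod\Lambda)$, under which $\cO(\vc)\mapsto I_{\vc}$, together with the torsion pair $(\cT,\cF)$ on $\cohX$ (where $\cT=\{X\mid\Hom(X,T)=0\}$, $\cF=\{Y\mid\Ext^1(Y,T)=0\}$). The point is that $\lperp{\cO}(\vc)\subset\cT$, and after the shift $[1]$ its image lands in $\mod\Lambda$; the vanishing $\cT\cap\cF=0$ then shows this image is exactly $\{M\in\mod\Lambda\mid\Hom(M,I_{\vc})=0\}=\mod\Lambda_{\vc}$. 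This torsion-pair step is the substance you are skipping. For the injectives, rather than verifying membership and doing socle/length bookkeeping, the paper simply takes the short exact sequences $I_{\vx}^{\vc}\hookrightarrow I_{\vx}\twoheadrightarrow I_{\vc}$ (and $I_0^{\vc}\hookrightarrow I_0\twoheadrightarrow I_{\vc}^2$) in $\mod\Lambda$, reads them as triangles, and transports them back along the derived equivalence to the cones of $\cO(\vx)\hookrightarrow\cO(\vc)$ and $\cO\hookrightarrow\cO(\vc)^2$; this yields $S_{\vx}^{\vc}$ and $\cO(2\vc)$ in one line each, with no separate Serre duality or counting needed.
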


\begin{proof}
 We follow the ideas of the proof of \cite[Proposition 3.4]{LP}. The functor $\D\R\Hom(-,T)$ is an equivalence from $D^b(\cohX)$ to $D^b(\mod \Lambda)$. Denote by 
\begin{align*}
\cX & = \D \R \Hom(\cT, T)[1] = \D \Ext^1(\cT, T) \text{, and} \\
\cY & = \D \R \Hom(\cF, T) = \D \Hom(\cF, T)
\end{align*}
the images of $\cT$ and $\cF$ under this equivalence (shifted to the module category of $\Lambda$), respectively. Moreover, denote by $I_{\vx}$ the indecomposable injective $\Lambda$-module at $\vx$ for $0\le \vx \le \vc$ and by $I_{\vx}^{\vc}$ the corresponding indecomposable injective $\Lambda_{\vc}\,$-modules.  Then we have that
\begin{align*}
 \lperp{\cO}(\vc) &= \{X \in \cT \mid \Ext^1(X,\cO(\vc))=0 \} \\
 & = \{X \in \cT \mid \Hom(\D\R\Hom(X, T), \underbrace{\D\R\Hom(\cO(\vc), T)}_{= I_{\vc}}) = 0 \} \\
 & = \{ M \in \cX \mid \Hom(M,I_{\vc})=0\}.
\end{align*}
Now observe that
\[ \{M \in \cY \mid \Hom(M,I_{\vc})=0 \} = \{Y \in \cF \mid \Hom(Y, \cO(\vc)) = 0\} = \cF \cap \cT = 0. \]
Therefore, since any $\Lambda$-module is the extension of one in $\cX$ and one in $\cY$, we have that 
\begin{align*}
\lperp{\cO}(\vc) &= \{ M \in \cX \mid \Hom(M, I_{\vc}) = 0 \} \\
& = \{ M \in \mod \Lambda \mid \Hom(M,I_{\vc})=0\} \\
& = \mod \Lambda_{\vc}.
\end{align*}
Now for $0 < \vx < \vc$ we have short exact sequences $ I_{\vx}^{\vc} \into I_{\vx} \onto I_{\vc}$. Thus, in the derived category we have
\[ I_{\vx}^{\vc} = \Cone(I_{\vx} \to I_{\vc})[-1] = \Cone(\cO(\vx) \to \cO(\vc)) = S_{\vx}^{\vc} \]
(note that the shift vanishes since we identify via $\D\R\Hom(-,T)[1]$), and similarly
\[ I_0^{\vc} = \Cone(I_0 \to I_{\vc}^2)[-1] = \Cone(\cO \to \cO(\vc)^2) = \cO(2\vc). \qedhere \]
\end{proof}

\subsubsection{Removing $\Hom$-arrows} \label{subsub.rem_hom}

We now assume to be in the following situation: Let $(E_1, \ldots, E_n)$ be an exceptional sequence with $E_1 = \cO(\vc)$. Then $(E_2, \ldots, E_n)$ is an exceptional sequence in $\lperp{\cO}(\vc) = \mod \Lambda_{\vc}$.

When we talk about the ranks of the objects in $\mod \Lambda_{\vc}$, we consider them in $\cohX$.

\begin{lemma} \label{back.to.orthogonal_explicit}
Let $\epsilon = (E_2, \ldots, E_n)$ be an exceptional sequence in $\mod \Lambda_\vc$, and $\ell$ such that $\dim_{\bbK} \Hom(E_{\ell}, E_{\ell+1}) = m > 0$. Then there is $t \in \mathbb{Z}$ such that the sequence
\[ \rho_{\ell}^t \epsilon = (E_2, \ldots, E_{\ell-1}, E_{\ell}', E_{\ell+1}', E_{\ell+2}, \ldots, E_n) \]
(here we write $\rho_{\ell}^{-1} = \lambda_{\ell}$) is such that $\Hom(E_{\ell}', E_{\ell+1}') = 0$. Furthermore,
\begin{enumerate}
\item if $m = 1$ then there is a unique such $t$ in $\{0, \pm1\}$,
\item if $m > 1$ and $\rank E_{\ell} \leq \rank E_{\ell+1}$, then $t$ is unique, and it is the minimal nonpositive integer such that the exceptional pair $\rho^{t+1}_1(E_{\ell}, E_{\ell+1}) = (X,Y)$ satisfies $m \rank(X) \le \rank(Y)$, and
 \item if $m > 1$ and $\rank E_{\ell} \geq \rank E_{\ell+1}$, then $t$ is unique, and it is the minimal nonnegative integer such that the exceptional pair $\rho^{t-1}_1(E_{\ell}, E_{\ell+1})= (X,Y)$ satisfies $\rank(X) > m \rank(Y)$.
\end{enumerate}
\end{lemma}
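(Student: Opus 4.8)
The plan is to establish existence by descending to the module category $\mod\Lambda_\vc$, then to reduce the fine analysis to a rank-two abelian subcategory, and finally to read off $t$ from a step-by-step bookkeeping of $\lambda_\ell$ (resp.\ $\rho_\ell$) governed by Proposition~\ref{E_M.mutation}. For existence: since $E_1=\cO(\vc)$, the truncated sequence $(E_2,\dots,E_n)$ is an exceptional sequence in $\lperp{\cO}(\vc)=\mod\Lambda_\vc$ (Theorem~\ref{Oc.left.perp}), the module category of a finite-dimensional hereditary algebra. The pair $(E_\ell,E_{\ell+1})$ has $\dim_\bbK\Hom(E_\ell,E_{\ell+1})=m>0$ and hence, being exceptional, $\Ext^1(E_\ell,E_{\ell+1})=0$; so Lemma~\ref{back.to.orthogonal}, applied inside $\mod\Lambda_\vc$, produces $t\in\bbZ$ with $\Hom(E_\ell',E_{\ell+1}')=0$ (recall $\rho_\ell^{-1}=\lambda_\ell$). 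Everything else in the statement is about pinning this $t$ down.

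The key reduction is the following. By \cite[Section~1]{Ringel1} the exceptional pair $(E_\ell,E_{\ell+1})$ generates a full, extension-closed abelian subcategory $\cA\subseteq\mod\Lambda_\vc$ with two simple objects, equivalent to $\mod K_p$ for $K_p$ the generalized Kronecker quiver on $p$ arrows; $\cA$ is stable under mutating the pair, so every $\rho_\ell^{t'}(E_\ell,E_{\ell+1})$ stays in $\cA$, and the only orthogonal pair in $\cA$ is its pair of simples, which must therefore be $(E_\ell',E_{\ell+1}')$. Next I would observe that the exceptional objects of $\mod K_p$ are the preprojective and preinjective indecomposables and that every non-orthogonal exceptional pair there has $\Hom$-space of dimension exactly $p$; hence $p=m$, and in particular $m>1$ forces $p\geq2$. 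When $p\geq2$, $\mod K_p$ has infinitely many exceptional objects, so the transitive $B_2\cong\bbZ$-action on its complete exceptional sequences (Theorem~\ref{Braid.group.action.transitive}) is free; thus the orthogonal pair is hit by a unique $t$, which is the uniqueness assertion of (b) and (c). When $m=1$ we get $p=1$, i.e.\ $\cA\cong\mod K_1$ (the path algebra of $A_2$), whose orbit is cyclic of period three; inspecting its three complete exceptional sequences shows that exactly one $t\in\{0,\pm1\}$ makes the pair orthogonal, which is (a).

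To identify $t$: for a non-orthogonal exceptional pair $(X,Y)$ in $\cA$ with $\dim\Hom(X,Y)=m$, $\Ext^1(X,Y)=0$, the $(\bfE)$/$(\bfM)$ criterion of Proposition~\ref{E_M.mutation} (with $a_{\ell,\ell+1}=m$) says $\lambda_\ell$ is of type $(\bfM)$ exactly when $m\rank X\leq\rank Y$ and of type $(\bfE)$ exactly when $m\rank X>\rank Y$. Applying $\Hom(-,X)$ to the defining short exact sequence and using $\Hom(Y,X)=0=\Ext^1(Y,X)$ then shows: in the $(\bfM)$ case $\lambda_\ell(X,Y)$ is already orthogonal, while in the $(\bfE)$ case $\lambda_\ell(X,Y)$ is again non-orthogonal with $\dim\Hom=m$, $\Ext^1=0$, and ranks $(m\rank X-\rank Y,\ \rank X)$. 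Iterating $\lambda_\ell$ from $(E_\ell,E_{\ell+1})$ (allowed since one object of each pair is indecomposable), the pairs stay non-orthogonal of type $(\bfE)$ — so $\dim\Hom$ stays $=m$ — until the first type-$(\bfM)$ pair, whose $\lambda_\ell$ then produces $(E_\ell',E_{\ell+1}')$; this first type-$(\bfM)$ pair is $\rho_1^{t+1}(E_\ell,E_{\ell+1})$, which is the formula in (b). Case (c) is the dual bookkeeping with $\rho_\ell$ and the dual criterion $\rank X>m\rank Y$, and case (a) falls out of the same discussion with $m=1$.

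The main obstacle is the rank bookkeeping in this last step: one must check that the backward $(\bfE)$-chain from $(E_\ell,E_{\ell+1})$ hits a type-$(\bfM)$ pair after finitely many steps and that, once past the orthogonal pair, the inequality $m\rank X\leq\rank Y$ never recurs (on the far side the first rank dominates, so for $m\geq2$ it fails, with a small rank-zero case to dispose of), so that the minimality clause genuinely isolates a single $t$. Here the hypothesis $\rank E_\ell\leq\rank E_{\ell+1}$ in (b) is exactly what places $(E_\ell,E_{\ell+1})$ on the $\lambda_\ell$-side of the orthogonal pair, and the recursion $(\rank X,\rank Y)\mapsto(m\rank X-\rank Y,\ \rank X)$ — a Euclidean-type descent toward the minimal (orthogonal) ranks — is what forces termination; (c) is dual.
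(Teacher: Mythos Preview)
Your proof is correct and follows essentially the same route as the paper: reduce to the rank-two subcategory $C(E_\ell,E_{\ell+1})\cong\mod K_m$ generated by the pair (the paper cites \cite[Theorem~5]{CB} here rather than \cite{Ringel1}, since the pair is not orthogonal), identify the unique orthogonal pair with the simples, and reach it by following the rank-decreasing mutations. Your write-up is considerably more detailed than the paper's four-sentence sketch---you justify $p=m$, make the uniqueness for $m>1$ explicit via the free $\bbZ$-action, and carry out the $(\bfE)$/$(\bfM)$ rank bookkeeping that the paper leaves implicit---but the underlying strategy is the same.
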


\begin{proof}
Let $C(E_{\ell},E_{\ell+1})$ be the smallest full subcategory of $\mod \Lambda_\vc$ containing $E_{\ell}$ and $E_{\ell+1}$ and that is closed under extensions, kernels of epimorphisms, and cokernels of monomorphisms. By \cite[Theorem~5]{CB}, the category $C(E_{\ell},Z_{\ell+1})$ is equivalent with the category of representations of the generalized Kronecker quiver with $m$ arrows. By iterated application of $L$ (or $R$), we reach the exceptional pair having nonzero extensions. This is the pair consisting of the simple objects  in $C(E_{\ell}, E_{\ell+1})$. Thus we find the integer $t$ by following the (right or left) mutations that decrease the ranks.
\end{proof}

\subsection{The algorithm} \label{back.to.squid.section}

In this subsection, we illustrate a method to obtain a sequence of (left and right) mutations in order to transform a given exceptional sequence into the exceptional sequence of the squid in $\cohX$.

The algorithm consist of the following steps: We first mutate in order to obtain an exceptional sequence where the first term is a line bundle. This is possible since any complete exceptional sequence of length $n$ in $\cohX$ can have at most $n-2$ torsion objects. Then we reduce $\Hom$-arrows between the remaining vector bundles of the exceptional sequence. Next we remove $\Ext$-arrows between torsion sheaves. Finally we remove all sources among the torsion sheaves.

We start with an arbitrary complete exceptional sequence $(E_1, \ldots, E_n)$ in $\cohX$. By abuse of notation we also call the updated exceptional sequence at any stage of the algorithm $(E_1, \ldots, E_n)$.

\begin{algorithm}[Step 1: Obtaining a line bundle] \label{alg.step_1} $ $
\begin{enumerate}
\item Preparation. If there are torsion sheaves in the exceptional sequence, then move them all the way to the right of the vector bundles using left mutations.
\item Reduction. Choose $a < b$ such that $E_a$ and $E_b$ are vector bundles with $\Hom(E_a, E_b) \neq 0$, and $\Hom(E_i, E_j) = 0$ for any $a \leq i < j \leq b$ with $(i,j) \neq (a,b)$. Apply Theorem~\ref{rank.reduction} (rank reduction).
\item Iterate until the exceptional sequence contains at least one line bundle. Then move this line bundle to the first position using right mutations.
\end{enumerate}
\end{algorithm}

Now we have an exceptional sequence where the first term is a line bundle. Up to twist with a line bundle we may assume $E_1 = \cO(\vc)$.

\begin{algorithm}[Step 2 -- $\Hom$-reduction] \label{alg.step_2} $ $
\begin{enumerate}
\item Preparation. If there are torsion sheaves in the exceptional sequence, then move them all the way to the right of the vector bundles using left mutations as in Algorithm~\ref{alg.step_1}(a).
\item Reduction. Choose $1 < a < b$ such that $E_a$ and $E_b$ are vector bundles with $\Hom(E_a, E_b) \neq 0$, but $\Hom(E_i, E_j) = 0$ for any other $a \leq i < j \leq b$. We can make $E_a$ and $E_b$ adjacent by using transpositions (see çLemma \ref{Hom.transposition}), thus we may assume $b = a+1$. Using Lemma~\ref{back.to.orthogonal_explicit}, find $t \in \bbZ$ such that 
\[ (E_1, \ldots, E_{a-1}, E_a^{\rm new}, E_b^{\rm new}, E_{b+1}, \ldots, E_n) = \rho_a^t (E_1, \ldots, E_n) \]
with $\Hom(E_a^{\rm new}, E_b^{\rm new}) = 0$.
\item Iterate until no more $\Hom$-arrows exist between objects of positive rank (not counting $E_1 = \cO(\vc)$).
\end{enumerate}
\end{algorithm}

Note that the iteration of Algorithm~\ref{alg.step_2} stops after a finite number of steps, since the number of torsion sheaves in the exceptional sequence is weakly increasing. When this number does not change, the proper reduction performed in (b) decreases the sum of the lengths of the objects in the exceptional sequence, where the length is  considered in $\mod \Lambda_\vc$ (see \cite[Section 6]{Ringel3}).

At this point our exceptional sequence has the following shape:
\[ (\underbrace{E_1}_{\cO(\vc)}, \underbrace{E_2, \ldots, E_m}_{\substack{\text{orthogonal} \\ \text{vector bundles}  }}, \underbrace{E_{m+1}, \ldots, E_n}_{\text{torsion sheaves}}) \]
We now show more precisely the following:

\begin{proposition} \label{prop.m_=_2}
In the situation above we have $m = 2$, and $E_2 = \cO(2\vc)$.
\end{proposition}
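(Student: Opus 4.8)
The plan is to pass to the hereditary module category $\lperp{\cO}(\vc)=\mod\Lambda_{\vc}$ of Theorem~\ref{Oc.left.perp} and there play the torsion sheaves of the exceptional sequence off against its vector bundles. Since $E_1=\cO(\vc)$, the sequence $(E_2,\ldots,E_n)$ is a complete exceptional sequence in $\mod\Lambda_{\vc}$, with $E_2,\ldots,E_m$ the vector bundles among its terms and $E_{m+1},\ldots,E_n$ the torsion sheaves. First, $m\ge 2$: by the remark recalled before Algorithm~\ref{alg.step_1}, a complete exceptional sequence in $\cohX$ contains at most $n-2$ torsion objects, hence at least two vector bundles, and the vector bundles among $E_1,\ldots,E_n$ are exactly $E_1,\ldots,E_m$. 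So it remains to prove $m\le 2$ and that $E_2\cong\cO(2\vc)$.

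For $m\le 2$, consider inside $\cohX$ the hereditary abelian subcategory $\mathcal A:={}^{\perp}(E_1\oplus\cdots\oplus E_m)$; it has $K_0$-rank $n-m$, and $(E_{m+1},\ldots,E_n)$ is a complete exceptional sequence in it. Since $E_{m+1},\ldots,E_n$ are torsion sheaves they lie in the Serre subcategory $\coh_0\bbX$ of $\cohX$, so $\mathcal A\cap\coh_0\bbX$ is a Serre subcategory of $\mathcal A$; as it contains a complete exceptional sequence of $\mathcal A$, and a complete exceptional sequence generates the whole category under kernels, cokernels and extensions, we obtain $\mathcal A=\mathcal A\cap\coh_0\bbX$, that is $\mathcal A\subseteq\coh_0\bbX$. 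Its exceptional generators $E_{m+1},\ldots,E_n$ can only lie in the tubes over the weighted points (homogeneous tubes contain no exceptional object), so $\mathcal A$ decomposes as a product of wide subcategories of these finite tubes; by the perpendicular-category calculus for weighted projective lines (see \cite{GL,Meltzer}) its complement $\mathcal B:=\mathcal A^{\perp}\supseteq\langle\cO(\vc),E_2,\ldots,E_m\rangle$ is equivalent to $\coh\bbX'$ for a weighted projective line $\bbX'$ obtained from $\bbX$ by lowering some weights — in particular $\mathcal B$ has no nonzero projectives, so it is genuinely of the form $\coh\bbX'$ and not $\mod H'$. Now $(E_2,\ldots,E_m)$ is an orthogonal exceptional sequence inside $\mathcal B\simeq\coh\bbX'$, and by \cite[Proposition~2.8]{Meltzer} a weighted projective line admits no orthogonal exceptional sequence of length $\ge 2$; hence $m-1\le 1$, i.e.\ $m=2$.

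It remains to identify $E_2$. With $m=2$, the sequence $(E_3,\ldots,E_n)$ has length $n-2$ in the rank-$(n-1)$ category $\mod\Lambda_{\vc}$, so its right perpendicular category taken inside $\mod\Lambda_{\vc}$ is a hereditary category of rank one, hence equivalent to $\mod\bbK$ and possessing a unique indecomposable object. As $E_3,\ldots,E_n$ follow $E_2$ in the exceptional sequence, $E_2$ belongs to this perpendicular category. So does $\cO(2\vc)$: by Theorem~\ref{Oc.left.perp} it is an indecomposable injective $\Lambda_{\vc}$-module, whence $\Ext^1_{\Lambda_{\vc}}(E_j,\cO(2\vc))=0$ for every $j$, while $\Hom_{\Lambda_{\vc}}(E_j,\cO(2\vc))=\Hom_{\cohX}(E_j,\cO(2\vc))=0$ for $j\ge 3$ because a torsion sheaf has no nonzero morphism to a vector bundle. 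By uniqueness of the indecomposable object, $E_2\cong\cO(2\vc)$.

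The delicate part is the middle paragraph. The new ingredient is the containment $\mathcal A\subseteq\coh_0\bbX$, obtained from the fact that a Serre subcategory containing a complete exceptional sequence is everything; the step I would be most careful about is identifying $\mathcal B=\mathcal A^{\perp}$ as $\coh\bbX'$ rather than $\mod H'$, since this is the essential structural input — orthogonal exceptional sequences of every length exist in $\mod H'$, so it is precisely the reduction to $\coh\bbX'$ that makes \cite[Proposition~2.8]{Meltzer} applicable — and it must be drawn from the perpendicular-category theory of tubes on weighted projective lines. Everything else is rank bookkeeping together with the explicit description of the injective $\Lambda_{\vc}$-modules in Theorem~\ref{Oc.left.perp}.
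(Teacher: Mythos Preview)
Your argument for $E_2\cong\cO(2\vc)$ in the third paragraph is clean and correct, and in fact a bit more direct than the paper's. The problem is the second paragraph, where you deduce $m\le 2$.

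The appeal to \cite[Proposition~2.8]{Meltzer} is incorrect as stated. That result says there is no orthogonal \emph{complete} exceptional sequence in $\coh\bbX$; it does not rule out short orthogonal exceptional sequences. Indeed, orthogonal exceptional pairs of vector bundles exist: for any weighted projective line with at least two weighted points, the line bundles $\cO(\vx_1)$ and $\cO(\vx_2)$ on different arms of the canonical configuration satisfy $\Hom$ and $\Ext^1$ vanishing in both directions. So from ``$(E_2,\ldots,E_m)$ is an orthogonal exceptional sequence in $\coh\bbX'$'' alone you cannot conclude $m-1\le 1$. Note also that $(E_2,\ldots,E_m)$ is not complete in $\coh\bbX'$ (it has length $m-1$ while $\coh\bbX'$ has rank $m$), so you cannot apply Meltzer's result directly even after the reduction.

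Your overall strategy is salvageable and then becomes essentially the paper's. Instead of stopping in $\coh\bbX'$, go one step further to $\lperp{\cO}_{\bbX'}(\vc)=\mod\Lambda'_{\vc}$; there $(E_2,\ldots,E_m)$ \emph{is} a complete orthogonal exceptional sequence (length $m-1$ equals the rank), so by the Ringel result quoted in the paper it must be the sequence of simple $\Lambda'_{\vc}$-modules. The paper then mutates these simples to the injectives via Lemma~\ref{from.simples.to.injectives.lemma} (mutations of type $(\bfX)$, which keep the objects vector bundles) and invokes Theorem~\ref{Oc.left.perp} for $\bbX'$ to see that all but one of the injectives are torsion, forcing $m=2$. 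Equivalently, one can compute directly that among the simple $\Lambda'_{\vc}$-modules only the simple at the central vertex, which equals the injective $I_0^{\vc}=\cO(2\vc)$, has positive rank. Either way, this is the missing step, and the detour through $\mathcal A={}^{\perp}(E_1\oplus\cdots\oplus E_m)$ and $\mathcal A^{\perp}$ can be replaced by the simpler $(E_{m+1},\ldots,E_n)^{\perp}=\coh\bbX'$ as in the paper.
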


\begin{proof}
 Let $C(E_2, \ldots, E_m)$ be the smallest full subcategory  of $\mod \Lambda_{\vc}$ which contains $E_2, \ldots, E_m$ and is closed under extensions, kernels of epimorphisms and cokernels of monomorphisms. By \cite[Theorem~5]{CB}, the category $C(E_2, \ldots, E_m)$ is equivalent to the category of representations of a quiver $Q_{m-1}$ with $m-1$ vertices and no oriented cycles. Since the exceptional sequence $(E_2, \ldots, E_m)$ is orthogonal in $\mod Q_{m-1}$, it is an exceptional sequence of the simple $\bbK Q_{m-1}$-modules.
 
Let $\sigma$ be as in Lemma~\ref{from.simples.to.injectives.lemma}. Then 
\[ \sigma \cdot (E_2, \ldots, E_m) = (I_m, \ldots, I_2) \]
where $I_i$ is the injective envelope of the simple $\bbK Q_{m-1}$-module $E_i$ for $2 \leq i \leq m$.

Recall that the sequence $\sigma$ is just made up of left mutations of type $(\bfX)$. Thus the $I_i$, seen as objects in $\cohX$, are vector bundles.

 We now consider the sequence $(E_{m+1},\ldots, E_n)$ of rank zero objects. Observe that $(E_{m+1},\ldots, E_n\rperp{)}$, as a full subcategory of $\cohX$, is equivalent to the category of coherent sheaves over a weighted projective line $\bbX'$. Let  $w_\bbX$ and $w_{\bbX'}$ be the weight sequences corresponding to $\bbX$ and $\bbX'$, respectively. Then $w_\bbX$ dominates $w_{\bbX'}$, i.e.\ $w_\bbX \ge w_{\bbX'}$. By iterated use of \cite[Proof of Theorem~9.8]{GL}, the category $\cohX'$ is equivalent to a full (exact) subcategory of $\cohX$ which is closed under extensions, and $\cO_{\bbX'}(\vc)$ can be identified with $\cO_\bbX (\vc)$. Moreover, the inclusion $\coh\bbX' \subset \cohX$ preserves ranks. So in particular, we have that $\vect{\bbX'}= \vect{\bbX}\cap\coh\bbX'$ and $\coh_0\bbX'=\coh_0\bbX \cap \coh \bbX'$. 

We look at the sequence $(\cO(\vc), I_m, \ldots, I_2)$ as an exceptional sequence in $\coh\bbX'$. By applying Theorem~\ref{Oc.left.perp} for $\coh\bbX'$, we see that $I_m \simeq \cO(2\vc)$ and the $I_i$ are torsion sheaves in $\cohX'$ for $2 \le i < m$ (i.e.\ rank zero objects). By the observation above that all the $I_i$ are vector bundles this means $m=2$.
\end{proof}

Thus our exceptional sequence is of the form
\[ (\underbrace{E_1}_{\cO(\vc)}, \underbrace{E_2}_{\cO(2\vc)}, \underbrace{E_3, \ldots, E_n}_{\text{torsion sheaves}}). \]

\begin{algorithm}[Step 3 -- $\Ext$-reduction] \label{alg.step_3} $ $
\begin{enumerate}
\item Preparation. Choose $a < b$ such that $\Ext^1(E_a, E_b) \neq 0$ with $b-a$ minimal. We apply transpositions to move $E_a$ and $E_b$ next to each other (this is possible by iterated use of Lemma~\ref{moving.closer.lemma}).
\item Reduction. Apply the right mutation $\rho_{b-1} = \rho_a$ to reduce the nonzero $\Ext$ groups of the exceptional sequence by one. Here the right mutation is of type $(\bfX)$.
\item Iterate until no more $\Ext$-arrows exist. 
\end{enumerate}
\end{algorithm}

Observe that the above process stops after a finite number of iterations (see the proof of Theorem~\ref{Ext.arrow.reduction}).

Note also that there are no extensions from vector bundles to torsion sheaves, or from $\cO(\vc)$ to $\cO(2\vc)$. Therefore $E_1$ and $E_2$ are not affected by Algorithm~\ref{alg.step_3}, whence we now have a tilting sequence of the form
\[ (\underbrace{E_1}_{\cO(\vc)}, \underbrace{E_2}_{\cO(2\vc)}, \underbrace{E_3, \ldots, E_n}_{\text{torsion sheaves}}). \]

\begin{algorithm}[Step 4 -- removing superfluous sources] \label{alg.step_4} $ $
\begin{enumerate}
\item Mutation. If there is a torsion sheaf $E_a$ which is a source in the quiver of the exceptional sequence, then apply tilting mutation (that is, right mutate it past all elements in the exceptional sequence to which there are irreducible morphisms from $E_a$, see  \cite[Proposition~2.3]{Hubner2}).
\item Iterate until there are no more sources among the torsion sheaves.
\end{enumerate}
\end{algorithm}

After this final algorithm, we have a tilting sequence of the form
\[ (\underbrace{E_1}_{\cO(\vc)}, \underbrace{E_2}_{\cO(2\vc)}, \underbrace{E_3, \ldots, E_n}_{\substack{\text{torsion sheaves,} \\ \text{no sources}}}). \]
Thus we have shown the following.

\begin{theorem}\label{back.to.squid.thm}
Let $(E_1,\ldots,E_n)$ be an exceptional sequence in $\cohX$. Then, applying Algorithms~\ref{alg.step_1}, \ref{alg.step_2}, \ref{alg.step_3}, and \ref{alg.step_4}, one obtains a sequence $\alpha$ of (left and right) mutations such that $\alpha \cdot (E_1, \ldots, E_n)$ is an exceptional sequence of the squid.
\end{theorem}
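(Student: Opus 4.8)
The plan is to carry out Algorithms~\ref{alg.step_1}--\ref{alg.step_4} in turn, verifying for each step that it terminates and that its output has the shape recorded in the surrounding text, and then to recognise the final tilting sequence as an exceptional sequence of the squid. Almost all of the individual termination and shape statements are either already proved in this section (notably Proposition~\ref{prop.m_=_2}) or quoted from results available above (Theorem~\ref{rank.reduction}, Theorem~\ref{Oc.left.perp}, Lemma~\ref{back.to.orthogonal_explicit}, Theorem~\ref{Ext.arrow.reduction}), so the proof is largely an assembly; the genuinely new piece is the very last identification. First I would run Step~1: since a complete exceptional sequence of length $n$ in $\cohX$ has at most $n-2$ rank-zero objects, at least two of the $E_i$ are vector bundles, and the preparation move (pushing torsion sheaves to the right by left mutations) preserves this. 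As long as every vector bundle still has rank $\ge 2$, a pair $(E_a,E_b)$ of vector bundles with $\Hom(E_a,E_b)\ne 0$ as demanded by Theorem~\ref{rank.reduction} is available — if none is immediately visible one uses transpositions (Lemmas~\ref{Hom.transposition} and~\ref{moving.closer.lemma}) to surface a $\Hom$-arrow, recalling that $\cohX$ has no orthogonal exceptional pairs — and rank reduction then strictly decreases $\|\varepsilon\|$ in the well-founded order on $\bbN_0^n$. Hence after finitely many iterations a line bundle appears; right mutations move it to the front, and after a twist by a line bundle I may assume $E_1=\cO(\vc)$.

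With $E_1=\cO(\vc)$ fixed, the tail $(E_2,\ldots,E_n)$ is an exceptional sequence in $\lperp{\cO}(\vc)\cong\mod\Lambda_{\vc}$ by Theorem~\ref{Oc.left.perp}, which lets me import the module-category reductions. In Step~2, $\Hom$-reduction via Lemma~\ref{back.to.orthogonal_explicit} removes the $\Hom$-arrows among the objects of positive rank; it terminates because the number of torsion sheaves in the sequence is weakly increasing and, while it stays constant, the proper reductions strictly decrease the total length, computed in $\mod\Lambda_{\vc}$, of the objects in the sequence. The outcome has the form $(\cO(\vc)\mid\text{orthogonal vector bundles}\mid\text{torsion sheaves})$, and Proposition~\ref{prop.m_=_2} (already proved) forces exactly two vector bundles, with $E_2=\cO(2\vc)$. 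In Step~3, $\Ext$-reduction follows the proof of Theorem~\ref{Ext.arrow.reduction}: bring an $\Ext^1$-nonzero pair together by transpositions (Lemma~\ref{moving.closer.lemma}) and apply a type-$(\bfX)$ right mutation, which terminates by the monovariant used there. Because there are no $\Hom$- or $\Ext^1$-arrows from vector bundles to torsion sheaves and $\Ext^1(\cO(\vc),\cO(2\vc))=0$, the pair $E_1,E_2$ is untouched, so we reach a strongly exceptional, hence tilting, sequence $(\cO(\vc),\cO(2\vc),E_3,\ldots,E_n)$ with $E_3,\ldots,E_n$ torsion sheaves.

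Finally, Step~4 and the conclusion. A tilting mutation at a torsion source only moves that object past torsion sheaves, since there are no nonzero maps from a torsion sheaf to a vector bundle; thus $E_1,E_2$ stay fixed and Step~4 takes place entirely inside the tilting sequence $(E_3,\ldots,E_n)$, which generates the union of the non-homogeneous tubes of $\coh_0\bbX$. As that category is of finite type (finitely many tilting objects), Step~4 halts once an explicit invariant — say the total length of the summands — is shown to strictly decrease under removal of a source. At termination no torsion sheaf is a source, and I then identify the result with an exceptional sequence of the squid: after twisting by $\cO(-\vc)$ the pair $(\cO(\vc),\cO(2\vc))$ becomes $(\cO,\cO(\vc))$, the line-bundle part of $T_{\sq}$ of Definition~\ref{Squid}, while a source-free complete exceptional sequence of torsion sheaves restricts, on the tube of rank $p_i$ at $\lambda_i$, to a source-free complete exceptional sequence of length $p_i-1$ there; since the exceptional objects of that tube are exactly the indecomposables of length $<p_i$, this restriction must be the descending chain $(S_i^{(p_i-1)},\ldots,S_i^{(1)})$ for a suitable choice of mouth simple $S_i^{(1)}$, which is precisely the torsion part of a squid. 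As squids are considered only up to twist with a line bundle and relabelling of $\lambdab$, the composite $\alpha$ of all the mutations produced satisfies the claim.

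The step I expect to be the main obstacle is this last paragraph: pinning down the right monovariant for the termination of Step~4, and carrying out the tube-by-tube classification of source-free exceptional sequences of torsion sheaves. Everything before it is essentially bookkeeping over the rank-reduction, perpendicular-category, and Ringel-type reduction results already in place.
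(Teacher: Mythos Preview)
Your proposal is correct and follows essentially the same approach as the paper: the paper treats Theorem~\ref{back.to.squid.thm} as a summary of Subsection~\ref{back.to.squid.section}, stating it immediately after Algorithm~\ref{alg.step_4} with the sentence ``Thus we have shown the following,'' so the proof \emph{is} precisely the sequential application of the four algorithms together with the intermediate results (Theorem~\ref{rank.reduction}, Theorem~\ref{Oc.left.perp}, Lemma~\ref{back.to.orthogonal_explicit}, Proposition~\ref{prop.m_=_2}, Theorem~\ref{Ext.arrow.reduction}) that you cite. If anything, you supply more detail than the paper does on two points it leaves implicit: the termination of Step~4 and the tube-by-tube identification of a source-free tilting sequence of the form $(\cO(\vc),\cO(2\vc),\text{torsion})$ with an exceptional sequence of the squid.
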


\subsection{Recovering $T$.} \label{recover.T.section} We have all the ingredients to complete the proof of the main theorem of this section.

\begin{proof}[Proof of Theorem~\ref{theorem.reconstruct_from_Q+rk}]
Using the graded quiver $Q_{\Gamma}$, we can recover $Q_{(E_1, \ldots, E_n)}$, the quiver of a tilting sequence $(E_1, \ldots, E_n)$ in $\cohX$ given by a tilting sheaf corresponding to $Q_{\Gamma}$. This quiver is unique up to transpositions. Using the algorithms discussed in Subsection~\ref{back.to.squid.section}, we obtain a sequence of mutations taking us to the squid (Theorem~\ref{back.to.squid.thm}). Note that we can keep track of the quiver and ranks throughout this procedure: In Algorithms~\ref{alg.step_1}, \ref{alg.step_2}, and \ref{alg.step_3} we do not mutate two torsion sheaves past each other, so by Proposition~\ref{E_M.mutation} we know the quiver and the ranks after mutation. In Algorithm~\ref{alg.step_4} we only do tilting mutations in sources, so we can also keep track of the quivers here (and the ranks of all the objects affected by Algorithm~\ref{alg.step_4} are $0$).

Now the cluster-tilting object $T_{\sq}(\vc)$ associated to the line bundle $\cO(\vc)$ is uniquely determined up to choice of a parameter sequence $\lambdab$ by the quiver, and the fact that it consists of $\cO(\vc)$, $\cO(2\vc)$, and torsion sheaves.
 
After making a choice for the parameter sequence $\lambdab$, we can apply the inverse sequence of mutations obtained so far to obtain a tilting sheaf which has the graded quiver and ranks we started with. The only choices made along the way are the choice of a parameter sequence $\lambdab$, and the fact that we randomly set a line bundle to be $\cO(\vc)$ between Algorithms~\ref{alg.step_1} and \ref{alg.step_2}. Thus the tilting sheaf we reconstructed is as unique as claimed by the theorem.

Clearly now we can also calculate $\Gamma$.
\end{proof}

As a consequence we obtain the following.

\begin{corollary}

Let $T_1$ and $T_2$ be two cluster-tilting objects in $\cC_{\bbX}$. Assume that $Q_{\Gamma_1} \simeq Q_{\Gamma_2}$, where $\Gamma_i=\End_\cC(T_i)$ for $i=1,2$, and this isomorphism of quivers respects the ranks of the indecomposable summands corresponding to the vertices. Then, regarding $T_1$ and $T_2$ as exceptional sequences, we have
\[T_1 = \alpha^{-1} \circ \phi \circ \alpha \cdot T_2(\vx), \]
where $\alpha$ is a sequence of mutations of exceptional sequences such that $\alpha T_2$ is a squid, $\phi$ is a permutation of the labels of arms of the squid which have the same length, and $\cO(\vx)$ is some line bundle.

\end{corollary}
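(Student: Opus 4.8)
The plan is to run the reduction-to-squid algorithm of Subsection~\ref{back.to.squid.section} on $T_1$ and $T_2$ \emph{simultaneously}, exploiting the fact — which is the guiding principle of Section~\ref{section.recoverT} — that this algorithm is steered entirely by the quiver of the exceptional sequence together with the ranks of its objects, and never by the objects themselves. Concretely, I would first fix compatible orderings. Ordering the indecomposable summands of $T_2$ into a tilting sequence $\varepsilon_2 = (E_1,\ldots,E_n)$, the quiver $Q_{\varepsilon_2}$ is recovered from $Q_{\Gamma_2}$ up to transpositions (as in the proof of Theorem~\ref{theorem.reconstruct_from_Q+rk}), and the ranks $\rank E_i$ are part of the given data. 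Using the rank-respecting isomorphism $Q_{\Gamma_1}\simeq Q_{\Gamma_2}$, I would then order the summands of $T_1$ into a tilting sequence $\varepsilon_1$ having the \emph{same} labelled quiver and the \emph{same} rank vector as $\varepsilon_2$.

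Next I would apply Algorithms~\ref{alg.step_1}--\ref{alg.step_4} to $\varepsilon_2$, obtaining by Theorem~\ref{back.to.squid.thm} a sequence $\alpha$ of left and right mutations with $\alpha\cdot\varepsilon_2$ an exceptional sequence of the squid. The point to make precise is that every mutation occurring in $\alpha$ — including whether it is a transposition or of type $(\bfE)$, $(\bfM)$, $(\bfX)$, and the quiver and ranks obtained afterwards — is computable from the current quiver and ranks alone: this is exactly Propositions~\ref{X.mutation} and~\ref{E_M.mutation}, Lemma~\ref{back.to.orthogonal_explicit}, the left mutation rule Proposition~\ref{left.mutation}, and rank additivity Theorem~\ref{rank-is-additive}, i.e.\ the bookkeeping already carried out in the proof of Theorem~\ref{theorem.reconstruct_from_Q+rk}. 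Since $\varepsilon_1$ has the same starting quiver and ranks as $\varepsilon_2$, the identical $\alpha$ is legal on $\varepsilon_1$, and $\alpha\cdot\varepsilon_1$ has the same quiver and ranks as $\alpha\cdot\varepsilon_2$; in particular $\alpha\cdot\varepsilon_1$ is again an exceptional sequence of a squid.

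It then remains to compare the two squids. Both $\alpha\cdot\varepsilon_1$ and $\alpha\cdot\varepsilon_2$ are squid exceptional sequences in $\cohX$ with the same quiver and the same ranks (two line-bundle summands, the remaining summands torsion of rank $0$), and, as used in the proof of Theorem~\ref{theorem.reconstruct_from_Q+rk}, such a sequence is determined by this data up to a twist by a line bundle and a reordering of the equal-weight parameters of $\lambdab$. Hence there are a line bundle $\cO(\vx)$ and a permutation $\phi$ of the labels of the arms of equal length with $\alpha\cdot\varepsilon_1 = \phi\cdot\bigl((\alpha\cdot\varepsilon_2)(\vx)\bigr)$. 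Since twisting by a line bundle is an autoequivalence of $\cohX$, it commutes with mutation of exceptional sequences, so $(\alpha\cdot\varepsilon_2)(\vx)=\alpha\cdot(\varepsilon_2(\vx))$; applying $\alpha^{-1}$ (the $\lambda_i$, $\rho_i$ being mutually inverse) then gives $T_1=\alpha^{-1}\circ\phi\circ\alpha\cdot T_2(\vx)$, as claimed.

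The main obstacle I anticipate is not a single computation but making the ``quiver-and-ranks determinism'' of $\alpha$ watertight: one must check that at no step of Algorithms~\ref{alg.step_1}--\ref{alg.step_4} is a genuine choice of objects required — the only real choices being the arbitrary designation of one line bundle as $\cO(\vc)$ and the parameter sequence $\lambdab$, both of which are precisely what $\cO(\vx)$ and $\phi$ absorb — and that every legality hypothesis invoked for $\varepsilon_2$ (such as the positive-rank condition in Proposition~\ref{E_M.mutation}, or the transposition/$(\bfX)$ distinction in Proposition~\ref{X.mutation}) is a condition on the quiver and ranks, hence holds equally for $\varepsilon_1$.
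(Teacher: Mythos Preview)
Your proposal is correct and follows exactly the approach the paper intends: the corollary is stated without proof as an immediate consequence of Theorem~\ref{theorem.reconstruct_from_Q+rk}, and what you have written is precisely the natural unpacking of that consequence --- run the deterministic quiver-and-rank-driven algorithm on both tilting sequences to obtain the same $\alpha$, then absorb the residual ambiguity of the squid into the twist $\cO(\vx)$ and the arm-permutation $\phi$. There is nothing to add.
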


The following example illustrates the necessity of the permutation $\phi$ in the corollary above.

\begin{example} \label{ex.changing_arms}
For $\lambdab = (\lambda_1, \lambda_2, \lambda_3, \lambda_4) \in (\bbP^1)^4$, consider the quasitilted algebras $\Lambda(\lambdab)$ of tubular type $(2,2,2,2)$ given by the quiver
\[ \begin{tikzpicture}[xscale=2]
 \node (1) at (0,0) {$1$};
 \node (2) at (1,1) {$2$};
 \node (3) at (1,-1) {$3$};
 \node (4) at (2,0) {$4$};
 \node (5) at (3,1) {$5$};
 \node (6) at (3,-1) {$6$};
 \draw [->] (1) -- node [above] {$x_1$} (2);
 \draw [->] (1) -- node [below] {$x_2$} (3);
 \draw [->] (2) -- node [above] {$x_1$} (4);
 \draw [->] (3) -- node [below] {$x_2$} (4);
 \draw [->] (1.10) -- node [above] {$y_0$} (4.170);
 \draw [->] (1.350) -- node [below] {$y_1$} (4.190);
 \draw [->] (4) -- node [above] {$x_3$} (5);
 \draw [->] (4) -- node [below] {$x_4$} (6);
\end{tikzpicture} \]
subject to the relations $x_1^2 = \lambda_1^0 y_0 + \lambda_1^1 y_1$, $x_2^2 = \lambda_2^0 y_0 + \lambda_2^1 y_1$, $(\lambda_3^0 y_0 + \lambda_3^1 y_1) x_3 = 0$, and $(\lambda_4^0 y_0 + \lambda_4^1 y_1) x_4 = 0$. (Note that the arrows $y_0$ and $y_1$ are superfluous, but we use them to get a more immediate connection from the parameter sequence $\lambdab$ of the weighted projective line and the relations of the algebra $\Lambda(\lambdab)$.) It is easy to see that the two algebras $\Lambda(\lambdab)$ and $\Lambda(\tilde \lambdab)$ are isomorphic if and only if $\lambdab$ and $\tilde \lambdab$ lie in the same orbit of the action of ${\rm PSL}(2) \times C_2 \times C_2$ on $(\bbP^1)^4$. Here ${\rm PSL}(2)$ acts on the individual components, the first cyclic group acts by interchanging the first two parameters, and the second cyclic group acts by interchanging the latter two parameters.

Now note that the weighted projective lines of types $(2,2,2,2; \lambdab)$ and $(2,2,2,2; \tilde \lambdab)$ have equivalent categories of coherent sheaves if any only if $\lambdab$ and $\tilde \lambdab$ lie in the same ${\rm PSL}(2)$ orbit up to reordering their entries, or equivalently, if they lie in the same ${\rm PSL}(2) \times \Sigma_4$-orbit, where $\Sigma_4$ denotes the symmetric group on four symbols.

It follows that coherent sheaves on the weighted projective lines of types $(2,2,2,2; (1\sep0), (1\sep1), (0\sep1), (\lambda^0\sep \lambda^1))$ and $(2,2,2,2; (1\sep0), (\lambda^0\sep \lambda^1), (0\sep1), (1\sep1))$ are equivalent. But one easily checks that the algebras $\Lambda((1\sep0), (1\sep1), (0\sep1), (\lambda^0\sep \lambda^1))$ and $\Lambda((1\sep0), (\lambda^0\sep \lambda^1), (0\sep1), (1\sep1))$ are typically not isomorphic.
\end{example}

\section{Conclusions and open questions} \label{lastsection}

In the following we summarize some natural questions on what information the quiver of a cluster-tilted algebra contains, and to what extent we know the answers: Throughout $T$ is a cluster-tilting object in $\cC_{*}$, and we denote by $Q$ the quiver of the corresponding cluster-tilted algebra.
\begin{enumerate}  
 \item Given $Q$, is it possible to decide if $\cC_{*} = \cC_{H}$ or $\cC_{*} = \cC_{\bbX}$? 
 \begin{enumerate}
 \item If there exists an acyclic quiver in the mutation class of $Q$, then $\cC_{*} = \cC_{H}$. Otherwise $\cC_{*} = \cC_{\bbX}$. However, note that we do not have an algorithmic way of finding out if such a quiver exists in the mutation class.
 \item Assume additionally that  we know the grading of the quiver $Q$. Then we can first recover the Cartan matrix of the underlying quasitilted algebra. Second, we calculate the Coxeter matrix. Then, from the roots of the Coxeter polynomial, we can recover if $\cC_{*} = \cC_{H}$ or $\cC_{*} = \cC_{\bbX}$ by using \cite[Proposition~9.1]{Lenzing2}. Furthermore, for the case $\cC_{*} = \cC_{\bbX}$ we even recover the weight sequence $\mathbf p$.
\end{enumerate}
 
\item If additionally we know that $\cC_{*} = \cC_{\bbX}$ for some weighted projective line $\bbX = (\bbP^1, \lambdab, \mathbf{p})$, can we recover the weight sequence $\mathbf{p}$?
 \begin{enumerate}
  \item For the Euclidean or tubular case, there exists a sequence of mutations taking $Q$ to the squid. Then one can read off the weight sequence $\mathbf p$ from the arms of the squid. For the wild case, this does not work in general, since it is not known if there is more than one mutation component.
  \item Assuming one has the graded quiver, one can calculate the roots of the Coxeter polynomial and use \cite[Proposition~9.1]{Lenzing2} as before. 
  \item An alternative way, which also uses the graded quiver, is by Theorem~\ref{theorem.reconstruct_from_Q+rk}. One finds a sequence of mutations of exceptional sequences that takes $Q$ to the squid.
   \end{enumerate}

\item In the setup of (b), assume we know $\mathbf{p}$. Can we recover $\lambdab$?
\begin{itemize}
\item[]
 No, we have to choose $\lambdab$, since exceptional sequences are independent of $\lambdab$ by \cite[Section~4.4]{Meltzer2}. In particular, tilting objects are independent of the parameter sequence.
\end{itemize}
   
 \item Given $Q$, how many different sink-source distributions can there be?
 \begin{enumerate}
  \item In case $\cC_{*} = \cC_{H}$, the sink-source distribution is unique by Proposition~\ref{prop.uniqueness_sink-source}(a).
  \item In case $\cC_{*} = \cC_{\bbX}$ for some $\bbX$ not of tubular type, if we additionally know the grading on $Q$, then the sink-source distribution can be uniquely calculated by Proposition~\ref{prop.uniqueness_sink-source}(b).
  \item In case $\cC_{*} = \cC_{\bbX}$ for some $\bbX$ of tubular type, we have seen in Example~\ref{example.several_sink_source_dist} that the sink-source distribution is not even uniquely determined by the graded quiver. It is an open question how many different sink-source distributions there can be.
 \end{enumerate}
 \item Given the (ungraded) quiver $Q$, can one recover the grading on $Q$? 
  \begin{enumerate}
   \item For the case $\cC_{*} = \cC_{H}$, proceed as in the proof of Proposition~\ref{prop.uniqueness_sink-source}.
   \item For the case $\cC_{*} = \cC_{\bbX}$ not wild, find a sequence of mutations taking $Q$ to the squid. Since we know the grading of the squid, use the graded mutation rule to go back to $Q$. The wild case is unknown.
  \end{enumerate}
 \item In case $\cC_{*} = \cC_{\bbX}$, is there a concrete algorithm to recover the grading on $Q$ by using the ungraded quiver and the ranks?

 \begin{enumerate}
  \item This question is open in general.
  \item In examples it is usually quite easy to recover the grading.
 \end{enumerate}
 \end{enumerate}

\end{document}